\newtheorem{thm}{Theorem}[section]
\newtheorem{cor}[thm]{Corollary}
\newtheorem{lem}[thm]{Lemma}
\newtheorem{prop}[thm]{Proposition}
\newtheorem{cond}[thm]{Condition}
\newtheorem{thmintro}{Theorem}
\newtheorem{ex}[thm]{Example}
\newcommand{\enuma}[1]{\begin{enumerate}[\textup{(}a\textup{)}] {#1} \end{enumerate}}
\newcommand{\mb}{\mathbf}
\newcommand{\bW}{{\mathbf W}}
\newcommand{\mr}{\mathrm}
\newcommand{\mc}{\mathcal}
\newcommand{\mf}{\mathfrak}
\newcommand{\N}{\mathbb N}
\newcommand{\Z}{\mathbb Z}
\newcommand{\R}{\mathbb R}
\newcommand{\C}{\mathbb C}
\newcommand{\LLC}{\mathrm{LLC}}
\newcommand{\matje}[4]{\left(\begin{smallmatrix} #1 & #2 \\ 
#3 & #4 \end{smallmatrix}\right)}
\newcommand{\q}{/\!/}
\def\Hom{{\rm Hom}}
\def\End{{\rm End}}
\def\Irr{{\mathbf {Irr}}}
\def\GL{{\rm GL}}
\def\PGL{{\rm PGL}}
\def\SL{{\rm SL}}
\def\cH{{\mathcal H}}
\def\cO{{\mathcal O}}
\def\Ind{{\rm Ind}}
\def\ind{{\rm ind}}
\def\Mod{{\rm Mod}}
\def\nr{{\rm nr}}
\def\fs{{\mathfrak s}}
\def\ft{{\mathfrak t}}
\def\fB{{\mathfrak B}}
\def\Rep{{\rm Rep}}
\def\Mod{{\rm Mod}}
\def\Res{{\rm Res}}
\def\Stab{{\rm Stab}}
\def\Nrd{{\rm Nrd}}
\def\Nor{{\rm N}}
\def\der{{\rm der}}
\def\temp{{\rm temp}}
\def\op{{\rm op}}
\def\cusp{{\rm cusp}}
\def\uni{{\rm un}}
\begin{document}

\title[Inner forms]{Smooth duals of inner forms of $\GL_n$ and $\SL_n$}

\author[A.-M. Aubert]{Anne-Marie Aubert}
\address{CNRS, Sorbonne Universit\'e, Universit\'e Paris Diderot, Institut de Math\'ematiques de Jussieu -- Paris Rive Gauche, IMJ-PRG, F-75005, Paris, France}
\email{anne-marie.aubert@imj-prg.fr}
\author[P. Baum]{Paul Baum}
\address{Mathematics Department, Pennsylvania State University, University Park, PA 16802, USA}
\email{pxb6@psu.edu}
\author[R. Plymen]{Roger Plymen}
\address{School of Mathematics, Manchester University, Manchester M13 9PL, England}
\email{roger.j.plymen@manchester.ac.uk}
\author[M. Solleveld]{Maarten Solleveld}
\address{IMAPP, Radboud Universiteit Nijmegen, Heyendaalseweg 135, 
6525AJ Nijmegen, the Netherlands}
\email{m.solleveld@science.ru.nl}
\date{\today}
\subjclass[2010]{20G25, 22E50}
%\keywords{representation theory, division algebra, Hecke algebra, types}
\thanks{
The fourth author is supported by a NWO Vidi grant "A Hecke algebra approach to the 
local Langlands correspondence" (nr. 639.032.528).\\
We thank the referee for the exceptionally detailed and accurate report.
}
\maketitle

\begin{abstract} Let $F$ be a local non-archimedean field.   
We prove that every Bernstein component in the smooth dual of each inner form of the general linear group $\GL_n(F)$ 
is canonically in bijection with the extended quotient for the action, given by Bernstein, of a finite group on a complex torus.  

For inner forms of $\SL_n(F)$ we prove that each Bernstein component is canonically in bijection with the associated twisted extended quotient.

In both cases, the bijections satisfy naturality properties with respect to the tempered dual, parabolic induction, central character, 
and the local Langlands correspondence.

%Ultimately, our results reduce the classification of irreducible representations to 
%that of supercuspidal representations, and similarly for the local Langlands correspondence.    
%This relies on our earlier study of Hecke algebras for types in these groups. We also make the relation with the local Langlands correspondence explicit.
\end{abstract}

\tableofcontents

\section*{Introduction}   
Let $X$ be a complex affine variety.  Denote the coordinate algebra of $X$ by $\cO(X)$.   
The Hilbert Nullstellensatz asserts that $X \mapsto \cO(X)$ is an equivalence of categories
\vspace{3mm}\\
\begin{center}
\begin{tikzpicture}[node distance=.5cm]
\node[text width=4.3cm, left delimiter=(, right delimiter=),left] at (-.6,0)
{unital commutative finitely generated nilpotent-free \mbox{$\C$-algebras}};
\node at (0,0){$\sim$};
\node[text width=3.1cm,left delimiter=(, right delimiter=),right] at (.6,0)
{affine complex \\ algebraic varieties};
\node at (4.5,.5) {op};
\end{tikzpicture}\\
$\mathcal{O}(X)\mapsto X$
\end{center}
\vspace{4mm}
where $\op$ denotes the opposite category.   

A finite type algebra is a $\C$-algebra    $A$ with a given structure as an $\cO(X)$-module such that $A$ is finitely 
generated as an $\cO(X)$-module.  A compatibility is required between the algebra structure of $A$ and 
the given action of $\cO(X)$ on $A$. However, $A$ is not required to be unital.   Due to the above equivalence of categories, any finite type algebra 
can be viewed as a slightly non-commutative affine algebraic variety.  This will be the point of view of the paper.

Following this point of view, each Bernstein component in the smooth dual of any reductive $p$-adic group $G$ is a non-commutative
affine algebraic variety.   In a series of papers we have examined the question ``What is the geometric structure of any given Bernstein component?"
Our proposed answer to this (which has been verified for all the classical split reductive $p$-adic groups) is based on the notion of \emph{extended quotient}.   In the present
paper we show that for inner forms of $\SL_n$ it is necessary to use a \emph{twisted extended quotient} (see the Appendix).  The twisting is given by a family of 
$2$-cocycles.  
%For real reductive Lie groups, results analogous to ours (using extended quotients) have been proven by  N. Higson \cite{NH} and A. Afgoustidis \cite{AA}.  

Let $F$ be a local non-archimedean field.   Let $D$ be a central simple $F$-algebra with $\dim_F (D) = d^2$. 
Then $\GL_m (D)$ is an inner form of $\GL_{md}(F)$ and the derived group $\GL_m (D)_\der$ is an inner form of $\SL_{md}(F)$.  The main result of this paper is

\begin{thmintro}\label{conj:1} 
Let $G$ be an inner form of $\GL_n(F)$ or $\SL_n(F)$.   
Let $\Irr^\fs(G)$ be any Bernstein component in the smooth dual of $G$.  Let
$T_\fs \q W_\fs$ and $(T_\fs \q W_\fs)_{\natural}$ be
the appropriate extended quotient and twisted extended quotient. 
Then:
\begin{itemize} 
\item[{\rm (1)}] If $G$ is an inner form of $\GL_n(F)$  there exists a 
bijection
\begin{equation*}\label{eq:3}
\Irr^\fs (G) \longleftrightarrow T_\fs \q W_\fs. 
\end{equation*}
\item[{\rm (2)}] If $G$ is an inner form of $\SL_n(F)$ there exists a
family of 2-cocycles $\natural$ and a bijection
\begin{equation*}
\Irr^\fs (G) \longleftrightarrow (T_\fs \q W_\fs )_\natural.
\end{equation*}
\item[{\rm (3)}] In either case, the bijection satisfies  naturality properties with respect to the tempered dual, parabolic induction, and central character.
\end{itemize}
\end{thmintro}

%such that: \begin{itemize} \item It restricts to a bijection $\Irr^\fs_\temp (G) \longleftrightarrow 
%(T_{\fs,\uni} \q W_\fs )_\natural$, and \eqref{eq:3} is determined by this restriction.
%\item Suppose that $\pi \in \Irr^\fs_\temp (G)$ is mapped to $[t,\rho] \in
%(T_{\fs,\uni} \q W_\fs )_\natural$. Then $W_\fs t \in T_{\fs,\uni} / W_\fs$ is the unitary
%part of the cuspidal support of $\pi$ (an element of $T_\fs / W_\fs$), with respect
%to the polar decomposition \eqref{eq:2}. \end{itemize} \end{thm}

\smallskip

\noindent
We remark that

\noindent
$\bullet$
 $T_\fs \q W_\fs$ is the non-commutative affine variety whose coordinate algebra is the crossed product algebra
$\cO(T_\fs) \rtimes W_\fs$ where $T_\fs, W_\fs$ are respectively the complex torus and finite group acting on the torus which Bernstein assigns to 
$\Irr^\fs(G)$.   The crossed product algebra  $\cO(T_\fs) \rtimes W_\fs$ is a finite $\cO(T_\fs/W_\fs)$-algebra.

\noindent
$\bullet$
  $(T_\fs \q W_\fs)_{\natural}$ is the non-commutative affine variety whose coordinate algebra is the twisted crossed product algebra 
$\cO(T_\fs) \rtimes_{\natural} W_\fs$.     The twisted crossed product algebra  $\cO(T_\fs) \rtimes_{\natural} W_\fs$ is a finite $\cO(T_\fs/W_\fs)$-algebra.  
Example~\ref{the_example} shows that for inner forms of $\SL_5$ there are Bernstein components where the twisting is non-trivial.

We observe that it is somewhat remarkable that many of the subtleties of the representation theory of $\GL_m(D)$, $\SL_m(D)$ are captured by the algebras  
  \[
  \cO(T_\fs) \rtimes W_\fs \quad \quad 
   \cO(T_\fs) \rtimes_{\natural} W_\fs
  \]
and  their geometric realizations  $T_\fs \q W_\fs, (T_\fs \q W_\fs)_{\natural}$.

\medskip

The proof of Theorem  \ref{conj:1} uses a weakening of Morita equivalence called \emph{stratified equivalence}, see \cite{ABPS8}.   The proof is achieved by combining the theory of types 
with an analysis of the structure of Hecke algebras.   

\noindent
\emph{Outline of the proof}.

The proof of Theorem \ref{conj:1} for $\GL_m(D)$ consists of two steps.   
\begin{itemize}
\item Step 1:  For each point $\fs$ in the Bernstein 
spectrum $\fB(\GL_m(D))$, the ideal $\cH(G)^\fs$ in the Hecke algebra $\cH(\GL_m(D))$ is Morita equivalent to an affine Hecke algebra, see \cite{ABPS4}.    
\item Step 2:  Using the Lusztig asymptotic algebra, a stratified equivalence is constructed between this affine Hecke algebra and  the associated crossed product 
algebra.  The irreducible representations of the crossed product algebra (in a canonical way) are the required extended quotient. 
\end{itemize} 

The proof of Theorem \ref{conj:1} for $ \SL_m(D)$ is achieved by introducing an intermediate group  
$\SL_m(D)\cdot Z(\GL_m(D))$:
\[
\SL_m(D) \subset \SL_m(D)\cdot Z(\GL_m(D)) \subset \GL_m(D),
\]
where $Z(\GL_m(D))$ is the center of $\GL_m(D)$. It consists in two analogous steps.
\begin{itemize}
\item Step 1: For each point $\fs$ in the Bernstein spectrum $\fB(\SL_m(D))$, the ideal $\cH(\SL_m(D))^\fs$ 
in the Hecke algebra $\cH(\SL_m(D))$ is Morita equivalent to a twisted affine Hecke algebra.  
\item Step 2: Using the Lusztig asymptotic algebra, a stratified equivalence is constructed between this twisted affine Hecke algebra and  the associated 
twisted crossed product algebra.  The irreducible representations of the twisted crossed product algebra (in a canonical way) are the required 
twisted extended quotient. 
\end{itemize}    

\medskip
In Section~\ref{sec:LLC}, we prove that the bijections of Theorem~\ref{conj:1} are compatible with 
the local Langlands correspondence, in the sense below.
Let $G$ be an inner form of $\GL_n(F)$ or $\SL_n(F)$, and let $\check G$ denote $\GL_n(\C)$ or 
$\PGL_n(\C)$, respectively. The set of $\check G$-conjugacy classes of Langlands parameters 
(resp. enhanced Langlands parameters) for $G$ is denoted by $\Phi(G)$ (resp. $\Phi_e(G)$). 
We may identify $\Phi_e(G)$ with $\Phi(G)$ when $G$ is an inner form of $\GL_n(F)$. 

Let $\mathcal L$ be a set of representatives for the conjugacy classes of Levi subgroups of $G$. 
For $L$ in $\mathcal L$, we denote by $\Irr_\cusp (L)$ the set of isomorphism classes of 
supercuspidal irreducible representations of $L$ and we let $\Phi (L)_\cusp$ be its image in 
$\Phi_e (L)$. The group $W(G,L) = \Nor_G (L) / L$, quotient by $L$ of the normalizer of $L$ in $G$,
acts naturally on both sets. Thus we may consider the extended quotients $\Irr_\cusp (L) \q W (G,L)$ 
and $\Phi (L)_\cusp \q W (G,L)$, as well as their twisted versions. 
It follows directly from the definitions that 
\[
\big( \Irr_\cusp (L) \q W (G,L) \big)_\natural = 
\bigsqcup\nolimits_\fs \big( T_\fs \q W_\fs \big)_\natural ,
\]
where the disjoint union runs over all $\fs \in \mathfrak B (G)$ coming from supercuspidal 
$L$-representations.

\begin{thmintro}\label{thm:LLC}
The following statements hold:
\begin{itemize}
\item
If $G=\GL_m(D)$ there exists a canonical, bijective, commutative diagram
\[
\xymatrix{
\Irr (G) \ar@{<->}[rr]^{} \ar@{<->}[d] & & 
\Phi (G) \ar@{<->}[d] \\
\bigsqcup\nolimits_{L \in \mathcal L} 
\Irr_\cusp (L) \q W (G,L)  \ar@{<->}[rr] && 
\bigsqcup\nolimits_{L \in \mathcal L} \Phi (L)_\cusp \q W (G,L) .
}
\]
\item If $G=\SL_m(D)$ there exists a
family of 2-cocycles $\natural$ and a (canonical up to 
permutations within L-packets) bijective commutative diagram
\[
\xymatrix{
\Irr (G) \ar@{<->}[rr]^{} \ar@{<->}[d] & & 
\Phi_e (G) \ar@{<->}[d] \\
\bigsqcup\nolimits_{L \in \mathcal L} 
\big( \Irr_\cusp (L) \q W (G,L) \big)_{\natural}  \ar@{<->}[rr] && 
\bigsqcup\nolimits_{L \in \mathcal L} \big( \Phi (L)_\cusp \q W (G,L) \big)_{\natural}.
}
\]
\end{itemize}
%Under the horizontal maps in the diagrams above, the tempered representations correspond 
%to the bounded Langlands parameters.
\end{thmintro}

\section{Preliminaries}
\label{sec:prelim}

We start with some generalities, to fix the notations. 

Let $G$ be a connected reductive group over a local non-archimedean field $F$
of residual characteristic $p$. All our representations are
 assumed to be smooth and over the complex numbers.
We write Rep$(G)$ for the category of such $G$-representations and $\Irr (G)$ for
the collection of isomorphism classes of irreducible representations therein.
Let $P$ be a parabolic subgroup of $G$ with Levi factor $L$. The Weyl group of
$L$ is $W(G,L) = N_G (L) / L$. It acts on equivalence classes of $L$-representations
$\pi$ by
\[
(w \cdot \pi) (g) = \pi (\bar w g \bar{w}^{-1}) , 
\]
where $\bar w \in N_G (L)$ is a chosen representative for $w \in W(G,L)$. 
We write 
\[
W_\pi = \{ w \in W(G,L) \mid w \cdot \pi \cong \pi \} . 
\]
Let $\omega$ be an irreducible supercuspidal $L$-representation. 
The inertial equivalence class $\fs = [L,\omega]_G$ gives rise to a category of smooth 
$G$-representations $\Rep^\fs (G)$ and a subset $\Irr^\fs (G) \subset \Irr (G)$.
Write $X_\nr (L)$ for the group of unramified characters $L \to \C^\times$. 
Then $\Irr^\fs (G)$ consists of all irreducible  constituents of the
parabolically induced representations $I_P^G (\omega \otimes \chi)$ with
$\chi \in X_\nr (L)$. We note that $I_P^G$ always means normalized, smooth parabolic
induction from $L$ via $P$ to $G$.

The set $\Irr^{\fs_L}(L)$ with $\fs_L = [L,\omega]_L$ can be described 
explicitly, namely by 
\begin{align}
& X_{\nr} (L,\omega) = \{ \chi \in X_\nr (L) : \omega \otimes \chi \cong \omega \} , \\
& \Irr^{\fs_L} (L) = \{ \omega \otimes \chi : \chi \in X_\nr (L) / X_\nr (L,\omega) \} .
\end{align}
Several objects are attached to the Bernstein component $\Irr^\fs (G)$ of
$\Irr (G)$ \cite{BeDe}. Firstly, there is the torus                        
\[  
T_\fs := X_\nr (L) / X_\nr (L,\omega) ,
\]
which is homeomorphic to $\Irr^{\fs_L}(L)$. 
Secondly, we have the groups 
\begin{align*}
N_G (\fs_L) = & \{ g \in N_G (L) \mid g \cdot \omega \in \Irr^{\fs_L}(L) \} \\
= & \{ g \in N_G (L) \mid g \cdot [L,\omega]_L = [L,\omega]_L \} , \\
W_\fs := & \{ w \in W(G,L) \mid w \cdot \omega \in \Irr^{\fs_L}(L) \} = N_G (\fs_L) / L .
\end{align*}
Of course $T_\fs$ and $W_\fs$ are only determined up to isomorphism by $\fs$, actually
they depend on $\fs_L$. To cope with this, we tacitly assume that $\fs_L$ is known
when considering $\fs$.

The choice of $\omega \in \Irr^{\fs_L}(L)$ fixes a bijection $T_\fs \to \Irr^{\fs_L}(L)$,
and via this bijection the action of $W_\fs$ on $\Irr^{\fs_L}(L)$ is transferred to
$T_\fs$. The finite group $W_\fs$ can be thought of as the ``Weyl group" of $\fs$, 
although in general it is not generated by reflections. 

Let $C_c^\infty (G)$ be the vector space of compactly supported locally constant
functions $G \to \C$. The choice of a Haar measure on $G$ determines a convolution
product * on $C_c^\infty (G)$. The algebra $(C_c^\infty (G),*)$ is known as the 
Hecke algebra $\cH (G)$. There is an equivalence between Rep$(G)$ and the category
$\Mod (\cH (G))$ of $\cH (G)$-modules $V$ such that $\cH (G) \cdot V = V$.
We denote the collection of inertial equivalence classes for $G$ by $\mathfrak B (G)$, 
the Bernstein spectrum of $G$. The Bernstein decomposition
\[
\Rep (G) = \prod\nolimits_{\fs \in \mathfrak B (G)} \Rep^\fs (G) 
\]
induces a factorization in two-sided ideals
\[
\cH (G) = \bigoplus\nolimits_{\fs \in \mathfrak B (G)} \cH (G)^\fs .
\]
From now on we discuss things that are specific for $G = \GL_m (D)$, 
where $D$ is a central simple $F$-algebra. We write $\dim_F (D) = d^2$. Every Levi
subgroup $L$ of $G$ is conjugate to $\prod_j \GL_{\tilde m_j}(D)$ for some 
$\tilde m_j \in \N$ with $\sum_j \tilde m_j = m$. Hence every irreducible
$L$-representation $\omega$ can be written as $\otimes_j \tilde \omega_j$
with $\tilde \omega_j \in \Irr (\GL_{\tilde m_j}(D))$. Then $\omega$ is supercuspidal 
if and only if every $\tilde \omega_j$ is so. As above, we
assume that this is the case. Replacing $(L,\omega)$ by an inertially equivalent
pair allows us to make the following simplifying assumptions:

\begin{cond} \label{cond} \
\begin{itemize}
\item if $\tilde m_i = \tilde m_j$ and $[\GL_{\tilde m_j}(D),\tilde \omega_i
]_{\GL_{\tilde m_j}(D)} = [\GL_{\tilde m_j}(D),\tilde \omega_j
]_{\GL_{\tilde m_j}(D)}$, then $\tilde \omega_i = \tilde \omega_j$;
\item $\omega = \bigotimes_i \omega_i^{\otimes e_i}$, such that $\omega_i$ and $\omega_j$
are not inertially equivalent if $i \neq j$;
\item $L = \prod_i L_i^{e_i} = \prod_i \GL_{m_i}(D)^{e_i}$, embedded diagonally in 
$\GL_m (D)$ such that factors $L_i$ with the same $(m_i,e_i)$ are in subsequent positions;
\item as representatives for the elements of $W(G,L)$ we take permutation matrices;
\item $P$ is the parabolic subgroup of $G$ generated by $L$ and the upper triangular
matrices;
\item if $m_i = m_j, e_i = e_j$ and $\omega_i$ is isomorphic to $\omega_j \otimes \gamma$
for some character $\gamma$ of $\GL_{m_i}(D)$, then $\omega_i = \omega_j \otimes \gamma \chi$
for some $\chi \in X_{\nr}(\GL_{m_i}(D))$.
\end{itemize}
\end{cond}

Most of the time we will not need the conditions for stating the results, 
but they are useful in many proofs. Under Conditions \ref{cond} we consider
\begin{equation}\label{eq:1.1}
M_i = \GL_{m_i e_i}(D) \text{ naturally embedded in } 
Z_G \Big( \prod\nolimits_{j \neq i} L_j^{e_j} \Big) .
\end{equation}
Then $\prod_i M_i$ is a Levi subgroup of $G$ containing $L$. For $\fs = [L,\omega]_G$ we have
\begin{equation}\label{eq:1.2}
W_\fs = N_{\prod_i M_i} (L) / L = \prod\nolimits_i N_{M_i}(L_i^{e_i}) / L_i^{e_i} 
\cong \prod\nolimits_i S_{e_i} , 
\end{equation}
a direct product of symmetric groups.  Writing $\fs_i = [L_i,\omega_i]_{L_i}$, 
the torus associated to $\fs$ becomes
\begin{align}
\label{eq:1.4} & T_\fs = \prod\nolimits_i ( T_{\fs_i} )^{e_i} = \prod\nolimits_i T_i , \\
& T_{\fs_i} = X_\nr (L_i) / X_\nr (L_i ,\omega_i) .
\end{align}
By our choice of representatives for $W(G,L) ,\, \omega_i^{\otimes e_i}$ is stable under
$N_{M_i}(L_i^{e_i}) / L_i^{e_i} \cong S_{e_i}$. If $R_i \subset X_* (\prod\nolimits_i Z(L_i)^{e_i})$
denotes the coroot system of $(M_i,Z(L_i)^{e_i})$, we can identify $S_{e_i}$ with $W(R_i)$. 
The action of $W_\fs$ on $T_\fs$ is just permuting coordinates in the standard way and
\begin{equation}\label{eq:1.3}
W_\fs = W_\omega .
\end{equation}
The reduced norm map $D \to F$ gives rise to a group homomorphism $\Nrd : G \to F^\times$.
We denote its kernel by $G^\sharp$, so $G^\sharp$ is also the derived group of $G$.
For subgroups $H \subset G$ we write 
\[
H^\sharp = H \cap G^\sharp .
\]
In \cite{ABPS4} we determined the structure of the Hecke algebras associated to types for
$G^\sharp$, starting with those for $G$. As an intermediate step, we did this for the
group $G^\sharp Z(G)$, where $Z(G) \cong F^\times$ denotes the centre of $G$. The 
advantage is that the comparison between $G^\sharp$ and $G^\sharp Z(G)$ is easy,
while $G^\sharp Z(G) \subset G$ can be treated as an extension of finite index.
In fact it is a subgroup of finite index if $p$ does not divide $md$. In case $p$ does
divide $md$, the quotient $G / G^\sharp Z(G)$ is compact and similar techniques
can be applied.

For an inertial equivalence class $\fs = [L,\omega]_G$ we define 
$\Irr^\fs (G^\sharp)$ as the set of irreducible $G^\sharp$-representations that are 
subquotients of $\Res^G_{G^\sharp}(\pi)$ for some $\pi \in \Irr^\fs (G)$, and 
$\Rep^\fs (G^\sharp)$ as the collection of $G^\sharp$-repre\-sen\-tations 
all of whose irreducible subquotients lie in $\Irr^\fs (G^\sharp)$.
We want to investigate the category $\Rep^\fs (G^\sharp)$.
It is a product of finitely many Bernstein blocks for $G^\sharp$, see \cite[Lemma 2.2]{ABPS4}:
\begin{equation} \label{eq:procat}
\Rep^\fs (G^\sharp) = 
\prod\nolimits_{{\mf t}^\sharp\prec\fs}\Rep^{{\mf t}^\sharp} (G^\sharp).
\end{equation} 
We note that the Bernstein components $\Irr^{\mf t^\sharp}(G^\sharp)$ which are subordinate
to one $\fs$ (i.e., such that ${\mf t}^\sharp\prec\fs$) form precisely one class of 
L-indistinguishable components: every L-packet
for $G^\sharp$ which intersects one of them intersects them all.

Analogously we define $\Rep^\fs (G^\sharp Z(G))$, and we obtain
\[
\Rep^\fs (G^\sharp Z(G)) = 
\prod\nolimits_{{\mf t} \prec\fs}\Rep^{{\mf t}} (G^\sharp Z(G)) ,
\]
where the $\mf t$ are inertial equivalence classes for $G^\sharp Z(G)$.

The restriction of $\mf t$ to $G^\sharp$ is a single inertial equivalence class 
$\mf t^\sharp$, and by \cite[(43)]{ABPS4}:
\begin{equation}\label{eq:2.16}
T_{\mf t^\sharp} = T_{\mf t} / X_\nr (\Nrd (Z(G))) . 
\end{equation}
For $\pi \in \Irr (G)$ we put
\[
X^G (\pi) := \{ \gamma \in \Irr (G / G^\sharp) : \gamma \otimes \pi \cong \pi \} .
\]
The same notation will be used for representations of parabolic subgroups of
$G$ which admit a central character.
For every $\gamma \in X^G (\pi)$ there exists a nonzero intertwining operator
%we changed to G from L
\begin{equation}\label{eq:2.30}
I(\gamma,\pi) \in \Hom_G (\pi \otimes \gamma ,\pi) = 
\Hom_G (\pi, \pi \otimes \gamma^{-1}),
\end{equation}
which is unique up to a scalar. As $G^\sharp \subset \ker (\gamma) ,\; I(\gamma,\pi)$
can also be considered as an element of $\End_{G^\sharp}(\pi)$. As such, 
these operators determine a 2-cocycle $\kappa_\pi$ by 
\begin{equation}\label{eq:2.21}
I(\gamma,\pi) \circ I(\gamma',\pi) = 
\kappa_\pi (\gamma,\gamma') I(\gamma \gamma' ,\pi) .
\end{equation}
By \cite[Lemma 2.4]{HiSa} they span the $G^\sharp$-intertwining algebra of $\pi$:
\begin{equation}\label{eq:2.1}
\End_{G^\sharp}\big( \mr{Res}^G_{G^\sharp} \pi \big) \cong \C [X^G (\pi) ,\kappa_\pi] ,
\end{equation}
where the right hand side denotes the twisted group algebra of $X^G (\pi)$.
Furthermore by \cite[Corollary 2.10]{HiSa}
\begin{equation}\label{eq:2.2}
\mr{Res}^G_{G^\sharp} \pi \cong \bigoplus_{\rho \in \Irr (\C [X^G (\pi) ,\kappa_\pi])}
\Hom_{\C [X^G (\pi) ,\kappa_\pi]} (\rho,\pi) \otimes \rho
\end{equation}
as representations of $G^\sharp \times X^G (\pi)$. 

The analogous groups for $\fs = [L,\omega]_G$ and $\fs_L = [L,\omega]_L$ are
\begin{align*}
& X^L (\fs) := \{ \gamma \in \Irr (L / L^\sharp Z(G)) : \gamma \otimes \omega \in
[L,\omega]_L \} , \\
& X^G (\fs) := \{ \gamma \in \Irr (G / G^\sharp Z(G)) : \gamma \otimes I_P^G (\omega) 
\in \Rep^\fs (G) \} .
\end{align*}
The role of the group $W_\fs$ for $\Rep^\fs (G^\sharp)$ is played by
\[
W_\fs^\sharp := \{ w \in W(G,L) \mid \exists \gamma \in \Irr (L / L^\sharp Z(G)) 
\text{ such that } w (\gamma \otimes \omega) \in [L,\omega]_L \}
\]
By \cite[Lemma 2.3]{ABPS4}
\begin{equation}\label{eq:1.6}
W_\fs^\sharp = W_\fs \rtimes \mf R_\fs^\sharp \text{ , where } 
\mf R_\fs^\sharp = W_\fs^\sharp \cap N_G (P \cap \prod\nolimits_i M_i) / L .
\end{equation}
while \cite[Lemma 2.4.d]{ABPS4} says that
\begin{equation}\label{eq:1.13}
X^G (\fs) / X^L (\fs) \cong \mf R_\fs^\sharp .
\end{equation}
For another way to view $X^G (\fs)$, we start with
\[
\Stab (\fs) := \{ (w,\gamma) \in N_G (L) / L \times  
\Irr (L / L^\sharp Z(G)) \mid w (\gamma \otimes \omega) \in [L,\omega]_L \} .
\] 
The normal subgroup $W_\fs$ has a complement:
\[
\begin{aligned}
& \Stab (\fs) = \Stab (\fs, P \cap \prod\nolimits_i M_i ) \ltimes W_\fs 
:= \Stab (\fs)^+ \ltimes W_\fs \\
& \Stab (\fs)^+ := \{ (w,\gamma) \in N_G (P \cap \prod\nolimits_i M_i) / L \times  
\Irr (L / L^\sharp Z(G)) \mid w (\gamma \otimes \omega) \in [L,\omega]_L \}
\end{aligned}
\]  
By \cite[Lemma 2.4.a]{ABPS4} projection of $\Stab (\fs)$ on the second
coordinate gives an isomorphism
\begin{equation}\label{eq:1.11}
X^G (\fs) \cong \Stab (\fs) / W_\fs \cong \Stab (\fs)^+ 
\end{equation}
In particular 
\begin{equation}\label{eq:1.12}
\Stab (\fs)^+ / X^L (\fs) \cong \mf R_\fs^\sharp .
\end{equation}
As in \cite[(159)--(161)]{ABPS4} we choose 
$\chi_\gamma \in X_\nr (L)^{W_\fs}$ for $(w,\gamma) \in \Stab (\fs)^+$, such that
\begin{equation}
w (\omega) \otimes \gamma \cong \omega \otimes \chi_\gamma .
\end{equation}
Notice that $\chi_\gamma$ is unique up to $X_\nr (L,\omega)$. 
Furthermore we choose an invertible
\begin{equation}\label{eq:1.22}
J(\gamma,\omega \otimes \chi_\gamma^{-1}) \in \Hom_L (\omega \otimes \chi_\gamma^{-1},
w^{-1} (\omega) \otimes \gamma^{-1}) .
\end{equation} 
This generalizes \eqref{eq:2.30} in the sense that 
\[
J(\gamma,\omega \otimes \chi_\gamma^{-1}) = I(\gamma,\omega) \quad \text{if} \quad
\gamma \in X^L (\omega) \text{ and }  \chi_\gamma = 1. 
\]
Let $V_\omega$ denote the vector space underlying $\omega$. We may assume that
\begin{equation} \label{eq:1.55}
\chi_\gamma = \gamma \quad \text{and} \quad J(\gamma,\omega \otimes \chi_\gamma^{-1}) = 
\mathrm{id}_{V_\omega} \quad \text{if} \quad \gamma \in X_{\nr}(L / L^\sharp Z(G)) .
\end{equation}

\section{Bernstein tori} 
\label{sec:tori} 

We will determine the Bernstein tori for $G^\sharp Z(G)$ and $G^\sharp$, in terms
of those for $G$. The group $X^L (\fs)$ acts on $T_\fs = \Irr^{\fs_L}(L)$ by 
$\pi \mapsto \pi \otimes \gamma$. By \cite[Proposition 2.1]{ABPS4} 
$\Res_{L^\sharp}^L (\omega)$ and $\Res_{L^\sharp}^L (\omega \otimes \chi)$ with 
$\chi \in X_{\nr}(L)$ have a common irreducible subquotient if and only if there is a 
$\gamma \in X^L (\fs)$ such that $\omega \otimes \chi \cong \omega \otimes \chi_\gamma$. 
As in \eqref{eq:1.22} we choose a nonzero
\[
J(\gamma,\omega) \in \Hom_L (\omega,\omega \otimes \chi_\gamma \gamma^{-1}) =
\Hom_L (\omega \otimes \gamma,\omega \otimes \chi_\gamma) .
\]
Then $J(\gamma,\omega) \in \Hom_{L^\sharp} (\omega,\omega \otimes \chi_\gamma)$
and for every irreducible subquotient $\sigma^\sharp$ of $\Res_{L^\sharp}^L (\omega)$
\begin{equation}\label{eq:4.15}
\gamma * (\sigma^\sharp \otimes \chi) : m \mapsto 
J(\gamma,\omega) \circ (\sigma^\sharp \otimes \chi)(m) \circ J(\gamma,\omega)^{-1} 
\end{equation}
is an irreducible subquotient representation of 
\[
\Res_{L^\sharp}^L (\omega \otimes \chi \chi_\gamma)
= \Res_{L^\sharp}^L (\omega \otimes \chi \chi_\gamma \gamma^{-1}) . 
\]
This prompts us to consider 
\begin{equation}\label{eq:2.3}
X^L (\fs,\sigma^\sharp) := \{ \gamma \in X^L (\fs) \mid \gamma * \sigma^\sharp 
\cong \sigma^\sharp \otimes \chi_\gamma \} .
\end{equation}
By \cite[Lemma 4.14]{ABPS4} 
\begin{equation}\label{eq:2.14}
\sigma^\sharp \otimes \chi \cong \sigma^\sharp \text{ for all } \chi \in X_\nr (L,\omega) .
\end{equation}
Hence the group \eqref{eq:2.3} is well-defined, that is, independent of the 
choice of the $\chi_\gamma$. For $\gamma \in X^L (\omega)$ \eqref{eq:4.15} 
reduces to $\sigma^\sharp \otimes \chi$, so $\gamma \in X^L (\fs,\sigma^\sharp)$.
By \eqref{eq:1.55} the same goes for $\gamma \in X_\nr (L / L^\sharp Z(G))$, so
there is always an inclusion
\begin{equation}\label{eq:4.2}
X^L (\omega) X_\nr (L / L^\sharp Z(G)) \subset X^L (\fs,\sigma^\sharp) .
\end{equation}
We gathered enough tools to describe the Bernstein tori for $G^\sharp$ and 
$G^\sharp Z(G)$. Recall that 
$\fs_L = [L,\omega]_L, T_\fs \cong X_\nr (L) / X_\nr (L,\omega)$.
Let $T_\fs^\sharp$ be the restriction of $T_\fs$ to $L^\sharp$, that is,
\begin{equation}\label{eq:2.39}
T_\fs^\sharp := T_\fs / X_\nr (G) = T_\fs / X_\nr (L / L^\sharp) 
\cong X_\nr (L^\sharp) / X_\nr (L,\omega) ,
\end{equation}
where $X_\nr (L / L^\sharp)$ denotes the group of unramified characters of $L$
which are trivial on $L^\sharp$.

\begin{prop}\label{prop:4.2}
Let $\sigma^\sharp$ be an irreducible subquotient of $\Res_{L^\sharp}^L (\omega)$ 
and write \\ $\mf t = [L^\sharp Z(G),\sigma^\sharp]_{G^\sharp Z(G)}$ and 
$\mf t^\sharp = [L^\sharp,\sigma^\sharp]_{G^\sharp}$. 
\enuma{
\item $X^L (\fs,\sigma^\sharp)$ depends only on $\fs_L$, not on the particular
$\sigma^\sharp$. 
\item $X_\nr (L,\omega) \{ \chi_\gamma \mid \gamma \in X^L (\fs,\sigma^\sharp) \}$ 
is a subgroup of $X_{\nr}(L)$ which contains \\
$X_{\nr}(L / L^\sharp Z(G))$.
\item $T_{\mf t} \cong T_\fs / \{ \chi_\gamma \mid \gamma \in X^L (\fs,\sigma^\sharp) \}
\cong X_{\nr}(L^\sharp Z(G)) / X_\nr (L,\omega) 
\{ \chi_\gamma \mid \gamma \in X^L (\fs,\sigma^\sharp) \}$.
\item $T_{\mf t^\sharp} \cong T_\fs^\sharp /
\{ \chi_\gamma \mid \gamma \in X^L (\fs,\sigma^\sharp) \} \cong
X_{\nr}(L^\sharp) / X_\nr (L,\omega) 
\{ \chi_\gamma \mid \gamma \in X^L (\fs,\sigma^\sharp) \}$.
}
\end{prop}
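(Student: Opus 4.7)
The plan is to establish a single key identity from which parts (c) and (d) follow almost immediately; parts (a) and (b) are preliminary structural checks supporting this identity. The identity is: for $\chi \in X_\nr(L)$,
\[
\sigma^\sharp \otimes \chi|_{L^\sharp Z(G)} \cong \sigma^\sharp \iff \chi \in X_\nr(L,\omega)\{\chi_\gamma \mid \gamma \in X^L(\fs,\sigma^\sharp)\},
\]
and similarly with $L^\sharp$ in place of $L^\sharp Z(G)$. Granting this, parts (c) and (d) follow by pulling back $T_{\mf t} = X_\nr(L^\sharp Z(G))/X_\nr(L^\sharp Z(G),\sigma^\sharp)$ along the surjection $X_\nr(L) \twoheadrightarrow X_\nr(L^\sharp Z(G))$, whose kernel $X_\nr(L/L^\sharp Z(G))$ is contained in $X_\nr(L,\omega)\{\chi_\gamma\}$ by part (b); this yields $T_{\mf t} \cong T_\fs / \{\chi_\gamma\}$, and the parallel argument with $L^\sharp$ in place of $L^\sharp Z(G)$ gives (d).

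For (a) I would invoke Clifford theory: any two irreducible subquotients $\sigma^\sharp$ and $\sigma'^\sharp$ of $\Res^L_{L^\sharp}(\omega)$ satisfy $\sigma'^\sharp \cong \ell \cdot \sigma^\sharp$ for some $\ell \in L$. Since $L/L^\sharp$ is abelian the character $\chi_\gamma|_{L^\sharp}$ is $L$-invariant, and since $J(\gamma,\omega)$ is an $L$-intertwiner it commutes with $\omega(\ell)$ up to scalar. Hence both sides of the defining condition $\gamma * \sigma^\sharp \cong \sigma^\sharp \otimes \chi_\gamma$ transport compatibly under $\ell$. For (b), containment $X_\nr(L/L^\sharp Z(G)) \subset X_\nr(L,\omega)\{\chi_\gamma\}$ uses \eqref{eq:1.55}: for $\gamma$ in this subgroup one may take $\chi_\gamma = \gamma$, and such $\gamma$ lies in $X^L(\fs,\sigma^\sharp)$ because its restriction to $L^\sharp$ is trivial. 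The subgroup property combines two facts: $\gamma \mapsto \chi_\gamma \bmod X_\nr(L,\omega)$ is a homomorphism $X^L(\fs) \to X_\nr(L)/X_\nr(L,\omega)$ (from $\omega \otimes \gamma\gamma' \cong \omega \otimes \chi_\gamma \chi_{\gamma'} \cong \omega \otimes \chi_{\gamma\gamma'}$), and $X^L(\fs,\sigma^\sharp)$ is closed under products and inverses in $X^L(\fs)$ (from the scalar-valued 2-cocycle relation among the $J(\gamma,\omega)$ combined with \eqref{eq:2.14}).

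For the $(\supseteq)$ direction of the key identity, write $\chi = \chi_0 \chi_\gamma$ with $\chi_0 \in X_\nr(L,\omega)$ and $\gamma \in X^L(\fs,\sigma^\sharp)$. Equation \eqref{eq:2.14} gives $\sigma^\sharp \otimes \chi_0 \cong \sigma^\sharp$, while the definition of $X^L(\fs,\sigma^\sharp)$ gives $\sigma^\sharp \otimes \chi_\gamma \cong \gamma * \sigma^\sharp \cong \sigma^\sharp$, both as $L^\sharp$-representations. To upgrade these to $L^\sharp Z(G)$-isomorphisms one must verify $\chi_0|_{Z(G)} = 1 = \chi_\gamma|_{Z(G)}$: the first because $\omega \otimes \chi_0 \cong \omega$ forces $\chi_0|_{Z(L)} = 1$, the second because $\omega \otimes \chi_\gamma \cong \omega \otimes \gamma$ together with $\gamma|_{Z(G)} = 1$ forces $\chi_\gamma|_{Z(G)} = 1$. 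For the $(\subseteq)$ direction, restricting the hypothesis further to $L^\sharp$ shows that $\Res^L_{L^\sharp}(\omega)$ and $\Res^L_{L^\sharp}(\omega \otimes \chi)$ share $\sigma^\sharp$ as a common irreducible subquotient, so \cite[Proposition 2.1]{ABPS5} gives $\gamma \in X^L(\fs)$ with $\omega \otimes \chi \cong \omega \otimes \chi_\gamma$, i.e., $\chi \in X_\nr(L,\omega)\chi_\gamma$. Then \eqref{eq:2.14} yields $\sigma^\sharp \otimes \chi_\gamma \cong \sigma^\sharp$, which combined with the general identity $\gamma * \sigma^\sharp \cong \sigma^\sharp$ as abstract $L^\sharp$-representations (read off from \eqref{eq:4.15} by conjugating back through $J(\gamma,\omega)$) delivers $\gamma \in X^L(\fs,\sigma^\sharp)$.

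The main obstacle is the $(\supseteq)$ direction of part (c): upgrading $L^\sharp$-isomorphisms to $L^\sharp Z(G)$-isomorphisms forces the careful central character bookkeeping on $Z(G)$ described above. Part (d) is formally identical but genuinely simpler, since one no longer needs to verify triviality of the relevant characters on $Z(G)$.
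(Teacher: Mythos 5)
Your proposal is correct and follows essentially the same route as the paper: your ``key identity'' is exactly the stabilizer computation the paper carries out in part (c) (via \cite[Proposition 2.1 and Lemma 4.14]{ABPS5}, \eqref{eq:1.55} and the homomorphism $\gamma \mapsto \chi_\gamma$), your part (a) is the paper's conjugation argument in compressed form, and the only organizational difference is that you prove (d) directly rather than deducing it from (c) via \eqref{eq:2.16}. One small caveat: the $L^\sharp$-version of your key identity is only correct with $X_\nr (L/L^\sharp)$ included on the right-hand side (it is absorbed when you pass from $T_\fs$ to $T_\fs^\sharp$), which your actual derivation of (d) implicitly does.
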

\begin{proof}
(a) By \cite[Proposition 2.1]{ABPS4} every two irreducible subquotients of \\
$\Res^L_{L^\sharp}(\omega)$ are direct summands and are conjugate by an element 
of $L$. Given $\gamma \in X^L (\fs)$, pick $m_\gamma \in L$ such that 
\[
\gamma * \sigma^\sharp \cong \big( \omega (m_\gamma)^{-1} \circ \sigma^\sharp \circ
\omega (m_\gamma) \big) \otimes \chi_\gamma = (m_\gamma \cdot \sigma^\sharp) \otimes
\chi_\gamma .
\]
For any other irreducible summand $\tau = m_\tau \cdot \sigma^\sharp$ of 
$\Res^L_{L^\sharp}(\omega)$ we compute
\begin{align*}
\gamma * \tau & = \gamma * (m_\tau \cdot \sigma^\sharp) =
J(\gamma,\omega) \circ \omega (m_\tau)^{-1} \circ \sigma^\sharp \circ \omega (m_\tau)
\circ J(\gamma,\omega)^{-1} \\
& = (\chi_\gamma \gamma^{-1} \otimes \omega)(m_\tau^{-1}) \circ J(\gamma,\omega) 
\circ \sigma^\sharp \circ J(\gamma,\omega)^{-1} \circ 
(\chi_\gamma \gamma^{-1} \otimes \omega)(m_\tau) \\
& \cong \omega (m_\tau^{-1}) \circ (m_\gamma \cdot \sigma^\sharp) \otimes 
\chi_\gamma \circ \omega (m_\tau) \\
& \cong (m_\tau m_\gamma \cdot \sigma^\sharp) \otimes \chi_\gamma .
\end{align*}
As $L / L^\sharp$ is abelian, we find that $m_\tau m_\gamma \cdot \sigma^\sharp
\cong m_\gamma m_\tau \cdot \sigma^\sharp$ and that
\[
\gamma * \tau \cong ( m_\gamma m_\tau \cdot \sigma^\sharp ) \otimes \chi_\gamma =
m_\gamma \cdot \tau \otimes \chi_\gamma .
\]
Writing $L_\tau = \{ m \in L \mid m \cdot \tau \cong \tau \}$, we deduce the
following equivalences:
\begin{multline*}
\gamma * \sigma^\sharp \cong \sigma^\sharp \otimes \chi_\gamma 
\Leftrightarrow m_\gamma \in L_{\sigma^\sharp} 
\Leftrightarrow m_\gamma \in m_\tau L_{\sigma^\sharp} m_\tau^{-1} = L_\tau
\Leftrightarrow \gamma * \tau \cong \tau \otimes \chi_\gamma .
\end{multline*}
This means that $X^L (\fs,\sigma^\sharp) = X^L (\fs,\tau)$.\\
(b) By \eqref{eq:1.55} and \eqref{eq:4.2}
\[
X_{\nr}(L / L^\sharp Z(G)) \subset
\{ \chi_\gamma \mid \gamma \in X^L (\fs,\sigma^\sharp) \}.
\]
In view of the uniqueness property of $\chi_\gamma$ the map
\[
X^L (\fs) \to X_{\nr}(L) / X_{\nr}(L,\omega) : \gamma \mapsto \chi_\gamma
\]
is a group homomorphism with kernel $X^L (\omega)$. Hence the
$\chi_\gamma$ form a subgroup of $X_{\nr}(L) / X_\nr (L,\omega)$, 
isomorphic to $X^L (\fs) / X^L (\omega)$.\\
(c) Consider the family of $L^\sharp Z(G)$-representations
\[
\{ \sigma^\sharp \otimes \chi \mid \chi \in X_\nr (L) \} . 
\]
We have to determine the $\chi$ for which $\sigma^\sharp \otimes \chi \cong 
\sigma^\sharp \in \Irr (L^\sharp Z(G))$. From \cite[Lemma 4.14]{ABPS4} we see that
this includes all the elements of $X_\nr (L,\omega) X_\nr (L / L^\sharp Z(G))$.
By \cite[Proposition 2.1.b]{ABPS4} and part (a), all the remaining $\chi$ come
from $\{\chi_\gamma \mid \gamma \in X^L (\fs,\sigma^\sharp) \}$. This 
gives the first isomorphism, and the second follows with part~(b). \\
(d) This is a consequence of part (c) and \eqref{eq:2.16}.
\end{proof}

Proposition \ref{prop:4.2} entails that for every inertial equivalence class
\begin{equation*}
\mf t = [L^\sharp Z(G),\sigma^\sharp]_{G^\sharp Z(G)} \; \prec \;
\fs = [L,\omega]_G 
\end{equation*}
the action \eqref{eq:4.15} of $X^L (\fs,\sigma^\sharp)$ leads to
\[
T_{\mf t} \cong T_\fs / X^L (\fs, \sigma^\sharp) .
\]
However, some of the tori 
\[
T_{\mf t} = T_{\mf t_L} = 
\Irr^{[L^\sharp Z(G),\sigma^\sharp]_{L^\sharp Z(G)}} (L^\sharp Z(G))
\]
associated to inequivalent $\sigma^\sharp \subset \Res_{L^\sharp}^L (\omega)$ 
can coincide as subsets of $\Irr (L^\sharp Z(G))$. This is caused by elements of 
$X^L (\fs) \setminus X^L (\fs, \sigma^\sharp)$ via the action \eqref{eq:4.15}. 
With \eqref{eq:2.14}, \eqref{eq:2.3} and \eqref{eq:2.2} we can write
\begin{equation}\label{eq:4.11}
\Irr^{\fs_L} (L^\sharp Z(G)) = 
\bigcup\nolimits_{\mf t_L \prec \fs_L} T_{\mf t_L} =
\Big( T_\fs \times \Irr (\C [X^L (\omega),\kappa_\omega]) \Big) / X^L (\fs) ,
\end{equation}
where $(\omega \otimes \chi, \rho) \in T_\fs \times 
\Irr (\C [X^L (\omega),\kappa_\omega])$ corresponds to 
\[
\Hom_{\C [X^L (\omega),\kappa_\omega]}(\rho,\omega \otimes \chi)
\; \in \; \Irr (L^\sharp Z(G)).
\]
With \eqref{eq:2.16} we can deduce a similar expression for $L^\sharp$:
\begin{equation}\label{eq:4.4}
\begin{aligned}
\Irr^{\fs_L} (L^\sharp) = 
\bigcup\nolimits_{\mf t_L^\sharp \prec \fs_L} T_{\mf t_L^\sharp} & = 
\Big( T_\fs^\sharp \times \Irr (\C [X^L (\omega),\kappa_\omega]) \Big) / X^L (\fs) \\ 
& = \Big( T_\fs \times \Irr (\C [X^L (\omega),\kappa_\omega]) \Big) / 
X^L (\fs) X_\nr (L^\sharp Z(G) / L^\sharp) .
\end{aligned}
\end{equation}
In the notation of \eqref{eq:4.11} and \eqref{eq:4.4} the action of 
$\gamma \in X^L (\fs)$ becomes
\begin{equation}\label{eq:4.3}
\gamma \cdot (\omega \otimes \chi,\rho) = 
(\omega \otimes \chi \chi_\gamma, \phi_{\omega,\gamma} \rho) ,
\end{equation}
where $\phi_{\omega,\gamma}$ is yet to be determined. Any $\gamma \in X^L (\omega)$ 
can be adjusted by an element of $X_\nr (L,\omega)$ to achieve $\chi_\gamma = 1$. 
Then \eqref{eq:2.14} shows that $\phi_{\omega,\gamma} \rho \cong \rho$ 
for all $\gamma \in X^L (\omega)$.

\begin{lem}\label{lem:4.1}
For $\gamma \in X^L (\fs) ,\; \phi_{\omega,\gamma} \rho$ is $\rho$ 
tensored with a character of 
$X^L (\omega)$, which we also call $\phi_{\omega,\gamma}$. Then 
\[
X^L (\fs) \to \Irr (X^L (\omega)) : \gamma \mapsto \phi_{\omega,\gamma}
\]
is a group homomorphism. 
\end{lem}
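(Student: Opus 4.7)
The plan is to realize $\phi_{\omega,\gamma}$ as the scalar character arising from conjugating the Clifford-theoretic intertwiners $I(\delta,\omega)$ by $J(\gamma,\omega)$. Via \eqref{eq:4.11} the pair $(\omega\otimes\chi,\rho)$ labels the $L^\sharp Z(G)$-summand $\Hom_{\mathbb{C}[X^L(\omega),\kappa_\omega]}(\rho,\omega\otimes\chi)$, in which $\mathbb{C}[X^L(\omega),\kappa_\omega]$ acts on $V_\omega$ through the operators $I(\delta,\omega)$ (by \eqref{eq:2.1}). The isomorphism $J(\gamma,\omega)\colon\omega\to\omega\otimes\chi_\gamma\gamma^{-1}$ transports this module structure, and \eqref{eq:4.3} is engineered so that the multiplicity space $\phi_{\omega,\gamma}\rho$ for the new labelling $\omega\otimes\chi\chi_\gamma$ is the irreducible module for the transported action $\delta\mapsto J(\gamma,\omega)\,I(\delta,\omega)\,J(\gamma,\omega)^{-1}$ on $V_\omega$.

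The computational heart is to check that this transported operator is a scalar multiple of $I(\delta,\omega)$. A direct calculation using $J(\gamma,\omega)\omega(l)=\chi_\gamma(l)\gamma(l)^{-1}\omega(l)J(\gamma,\omega)$ and $I(\delta,\omega)\omega(l)=\omega(l)\delta(l)^{-1}I(\delta,\omega)$ shows that the scalar twist $\chi_\gamma(l)\gamma(l)^{-1}$ cancels between the two copies of $J$ in the conjugation, so that $J(\gamma,\omega)\,I(\delta,\omega)\,J(\gamma,\omega)^{-1}$ again lies in the one-dimensional space $\Hom_L(\omega\otimes\delta,\omega)$, which by Schur is spanned by $I(\delta,\omega)$. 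Hence there is a unique $\phi_{\omega,\gamma}(\delta)\in\mathbb{C}^\times$ with
\[
J(\gamma,\omega)\,I(\delta,\omega)\,J(\gamma,\omega)^{-1}=\phi_{\omega,\gamma}(\delta)\,I(\delta,\omega).
\]
Conjugating the cocycle identity $I(\delta_1,\omega)I(\delta_2,\omega)=\kappa_\omega(\delta_1,\delta_2)I(\delta_1\delta_2,\omega)$ by $J(\gamma,\omega)$, the factor $\kappa_\omega(\delta_1,\delta_2)$ appears on both sides and cancels, yielding $\phi_{\omega,\gamma}(\delta_1\delta_2)=\phi_{\omega,\gamma}(\delta_1)\phi_{\omega,\gamma}(\delta_2)$. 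Unwinding the transported action on $\rho$ then identifies $\phi_{\omega,\gamma}\rho$ with $\rho\otimes\phi_{\omega,\gamma}$, proving the first claim.

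For the homomorphism property in $\gamma$, the key remark is that, as operators on $V_\omega$, one may take $J(\gamma,\omega\otimes\chi)=J(\gamma,\omega)$ for every $\chi\in X_{\nr}(L)$, since both satisfy the identical condition $A\omega(l)=\chi_\gamma(l)\gamma(l)^{-1}\omega(l)A$. Consequently $J(\gamma_1,\omega)\circ J(\gamma_2,\omega)$ intertwines $\omega$ with $\omega\otimes\chi_{\gamma_1}\chi_{\gamma_2}(\gamma_1\gamma_2)^{-1}$ and hence equals a nonzero scalar multiple of $J(\gamma_1\gamma_2,\omega)$. Since such scalars are invisible under conjugation, applying the previous step to both $\gamma_1\gamma_2$ and to the composite yields $\phi_{\omega,\gamma_1\gamma_2}=\phi_{\omega,\gamma_1}\phi_{\omega,\gamma_2}$. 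The main obstacle is the intertwining identification in the middle paragraph: verifying that the character twist by $\chi_\gamma\gamma^{-1}$ really cancels in the conjugation, so that $J(\gamma,\omega)\,I(\delta,\omega)\,J(\gamma,\omega)^{-1}$ remains in the original one-dimensional Hom-space spanned by $I(\delta,\omega)$. Once this cancellation is in place, everything else is a routine cocycle calculation.
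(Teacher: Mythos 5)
Your proposal is correct and follows essentially the same route as the paper: both realize $\phi_{\omega,\gamma}$ as the scalar by which conjugation by $J(\gamma,\omega)$ rescales the one-dimensional intertwining spaces spanned by the $I(\gamma',\omega)$, obtain the character property by conjugating the cocycle relation \eqref{eq:4.5}, and get the homomorphism property in $\gamma$ from the fact that the $J$'s compose up to scalars (the paper later codifies this as \eqref{eq:4.30}). The only cosmetic difference is that you compare $J I J^{-1}$ with $I(\gamma',\omega)$ while the paper compares it with $I(\gamma',\omega\otimes\chi_\gamma)$; these span the same line in $\End_\C(V_\omega)$, so the arguments coincide.
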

\begin{proof}
Let $N_{\gamma'}$ be a standard basis element of $\C[X^L (\omega),\kappa_\omega]$. 
In view of \eqref{eq:4.15} $\phi_{\omega,\gamma} \rho$ is given by
\begin{equation}\label{eq:2.5}
N_{\gamma'} \mapsto J(\gamma,\omega) I(\gamma',\omega) J(\gamma,\omega)^{-1} \in
\Hom_L (\omega \otimes \gamma' \chi_\gamma, \omega \otimes \chi_\gamma) .
\end{equation}
Since these are irreducible $L$-representations, 
there is a unique $\lambda \in \C^\times$ such that 
\begin{align*}
& J(\gamma,\omega) I(\gamma',\omega) J(\gamma,\omega)^{-1} = 
\lambda^{-1} I(\gamma',\omega \otimes \chi_\gamma) , \\
& (\phi_{\omega,\gamma} \rho)(N_{\gamma'}) = \rho (\lambda I (\gamma',\omega)) =
\lambda \rho (N_{\gamma'}).
\end{align*}
Moreover the relation
\begin{equation}\label{eq:4.5}
I (\gamma'_1,\omega \otimes \chi_\gamma) I (\gamma'_2,\omega \otimes \chi_\gamma) =
\kappa_{\omega \otimes \chi_\gamma}(\gamma'_1,\gamma'_2) 
I(\gamma'_1 \gamma'_2, \omega \otimes \chi_\gamma) 
\end{equation}
also holds with $J(\gamma,\omega) I(\gamma'_i,\omega) J(\gamma,\omega)^{-1}$ 
instead of $I(\gamma'_i,\omega)$-- a basic property of conjugation. 
It follows that $\gamma' \mapsto \lambda$ defines a character of $X^L (\omega)$ 
which implements the action $\rho \mapsto \phi_{\omega,\gamma} \rho$. 
As $\phi_{\omega,\gamma}$ comes from conjugation by 
$J(\gamma,\omega \otimes \chi)$ and by \eqref{eq:4.5}, 
$\gamma \mapsto \phi_{\omega,\gamma}$ is a group homomorphism. 
\end{proof}

A straightforward check, using the above proof, shows that
\begin{equation}\label{eq:2.4}
\begin{array}{ccc}
\Hom_{\C [X^L (\omega),\kappa_\omega]}(\rho,\omega \otimes \chi) & \to &
\Hom_{\C [X^L (\omega),\kappa_\omega]}
(\phi_{\omega,\gamma} \rho,\omega \otimes \chi \chi_\gamma) \\
f & \mapsto & J(\gamma,\omega \otimes \chi) \circ f
\end{array}
\end{equation}
is an isomorphism of $L^\sharp Z(G)$-representations.

\section{Hecke algebras}

We will show that the algebras $\mc H (G^\sharp Z(G))^\fs$ and $\mc H (G^\sharp )^\fs$
are stratified equivalent \cite{ABPS8} with much simpler algebras. In this section we recall
the final results of \cite{ABPS4}, which show that up to Morita equivalence these algebras 
are closely related to affine Hecke algebras. In section \ref{sec:geomEquiv}
we analyse the latter algebras in the framework of \cite{ABPS8}.

Our basic affine Hecke algebra is called $\cH (T_\fs,W_\fs,q_\fs)$. 
By definition \cite[(119)]{ABPS4} it has a $\C$-basis
$\{ \theta_x [w] : x \in X^* (T_\fs), w \in W_\fs \}$ such that 
\begin{itemize}
\item the span of the $\theta_x$ is identified with the algebra $\mc O (T_\fs)$ 
of regular functions on $T_\fs$;
\item the span of the $[w]$ is the finite dimensional Iwahori--Hecke algebra
$\cH (W_\fs,q_\fs)$;
\item the multiplication between these two subalgebras is given by 
\begin{equation}\label{eq:1.14}
f [s] - [s] (s \cdot f) = (q_\fs (s) - 1)(f - (s \cdot f)) (1 - \theta_{-\alpha})^{-1}
\qquad f \in \mc O (T_\fs) ,
\end{equation}
for a simple reflection $s = s_\alpha$;
\item the algebra is well-defined for any array of parameters $q_\fs = (q_{\fs,i})_i$ in 
$\C^\times$. The parameters $q_{\fs,i}$ that we will use are given explicitly in 
\cite[Th\'eor\`eme 4.6]{Sec3}.
\end{itemize}
Thus $\cH (T_\fs,W_\fs,q_\fs)$ is a tensor product of affine Hecke algebras of type
$GL_e$, but written in such a way that the torus $T_\fs$ appears canonically in it
(i.e. independent of the choice of a base point of $T_\fs$).
S\'echerre and Stevens \cite{Sec3,SeSt6} showed that 
\begin{equation}\label{eq:1.30}
\mc H (G)^\fs \text{ is Morita equivalent with } \mc H (T_\fs,W_\fs,q_\fs).
\end{equation}
From \cite{SeSt4} we know that there exists a simple type $(K,\lambda)$ for 
$[L,\omega]_M$, and in \cite{SeSt6} it was shown to admit a $G$-cover $(K_G,\lambda_G)$. 
We denote the associated central idempotent of $\cH (K)$ by $e_\lambda$, and similarly 
for other irreducible representations. Then $V_\lambda = e_\lambda V_\omega$.

For the restriction process we need an idempotent that is invariant under 
$X^G (\fs)$. To that end we replace $\lambda_G$ by the sum of the representations 
$\gamma \otimes \lambda_G$ with $\gamma \in X^G (\fs)$, which we call $\mu_G$.
In \cite[(91)]{ABPS4} we constructed an idempotent $e_{\mu_G} \in \cH (G)$ which is supported
on the compact open subgroup $K_G \subset G$. It follows from the work of S\'echerre
and Stevens \cite{SeSt6} that $e_{\mu_G} \cH (G) e_{\mu_G}$ is Morita equivalent
with $\cH (G)^\fs$. 

In \cite[(128)]{ABPS4} we defined a finite dimensional subspace
\[
V_\mu := \sum\nolimits_{\gamma \in X^G (\fs)} e_{\gamma \otimes \lambda} V_\omega
\]
of $V_\omega$ which is stable under the operators $I(\gamma,\omega)$ with 
$\gamma \in X^L (\fs)$. By \cite{Sec3} and \cite[Theorem 4.5.d]{ABPS4}
\begin{equation}\label{eq:1.7}
e_{\mu_G} \cH (G) e_{\mu_G} \cong \cH (T_\fs,W_\fs,q_\fs) \otimes 
\End_\C ( V_\mu \otimes_\C \C \mf R_\fs^\sharp) .
\end{equation}
The groups $X^G (\fs)$ and $X_\nr (G)$ act on $e_{\mu_G} \cH (G) e_{\mu_G}$ by 
pointwise multiplication of functions $G \to \C$ with characters of $G$. However, for
technical reasons we use the action
\begin{equation}\label{eq:1.9}
\alpha_\gamma (f) (g) = \gamma^{-1}(g) f(g) \qquad f \in \cH (G), 
\gamma \in \Irr (G / G^\sharp), g \in G .
\end{equation}
The action on the right hand side of \eqref{eq:1.7} preserves the tensor factors, 
and on $\End_\C (\C \mf R_\fs^\sharp)$ it is the natural action of 
$X^G (\fs) / X^L (\fs) \cong \mf R_\fs^\sharp$.

Although $e_{\mu_G}$ looks like the idempotent of a type, it is not clear whether
it is one, because the associated $K_G$-representation is reducible and no more
suitable compact subgroup of $G$ is in sight. Let $e_{\mu_{G^\sharp}}$ (respectively
$e_{\mu_{G^\sharp Z(G)}}$) be the restriction of $e_{\mu_G} : G \to \C$ to $G^\sharp$
(resp. $G^\sharp Z(G)$). We normalize the Haar measure on $G^\sharp$ (resp.
$G^\sharp Z(G)$) such that it becomes an idempotent in $\cH (G^\sharp)$ (resp.
$\cH (G^\sharp Z(G))$).

In \cite[Lemma 3.3]{ABPS4} we  constructed a certain finite set
$[L / H_\lambda]$, consisting of representatives for a normal subgroup 
$H_\lambda \subset L$. Consider the elements 
\begin{equation}\label{eq:1.25}
\begin{aligned}
& e^\sharp_{\lambda_G} := \sum\nolimits_{a \in [L / H_\lambda]} a e_{\mu_G} a^{-1} \in \cH (G), \\ 
& e^\sharp_{\lambda_{G^\sharp Z(G)}} := \sum\nolimits_{a \in [L / H_\lambda]} 
a e_{\mu_{G^\sharp Z(G)}} a^{-1} \in \cH (G^\sharp Z(G)), \\ 
& e^\sharp_{\lambda_{G^\sharp}} := \sum\nolimits_{a \in [L / H_\lambda]} 
a e_{\mu_{G^\sharp}} a^{-1} \in \cH (G^\sharp).
\end{aligned}
\end{equation}
It follows from \cite[Lemma 3.12]{ABPS4} that they are again idempotent. Notice that
$e^\sharp_{\lambda_G}$ detects the same category of $G$-representations as $e_{\mu_G}$,
namely $\Rep^\fs (G)$. In the proof of \cite[Proposition 3.15]{ABPS4} we established
that \eqref{eq:1.7} extends to an isomorphism
\begin{equation}\label{eq:1.10}
e^\sharp_{\lambda_G} \cH (G) e^\sharp_{\lambda_G} \cong \cH (T_\fs,W_\fs,q_\fs) \otimes 
\End_\C ( V_\mu \otimes_\C \C \mf R_\fs^\sharp) \otimes M_{[L : H_\lambda]}(\C) .
\end{equation}

\begin{thm}\label{thm:1.1}
\textup{\cite[Theorem 4.13]{ABPS4}} Let $G = \GL_m(D)$ be an inner form of $\GL_n(F)$.   
Then for any $\fs \in \fB(G)$:
\enuma{
\item $\cH (G^\sharp Z(G))^\fs$ is Morita equivalent with its subalgebra
\[
e^\sharp_{\lambda_{G^\sharp Z(G)}} \cH (G^\sharp Z(G)) e^\sharp_{\lambda_{G^\sharp Z(G)}}
= \bigoplus\nolimits_{a \in [L / H_\lambda]} a e_{\mu_{G^\sharp Z(G)}} a^{-1} 
\cH (G^\sharp Z(G)) a e_{\mu_{G^\sharp Z(G)}} a^{-1}
\]
\item Each of the algebras $a e_{\mu_{G^\sharp Z(G)}} a^{-1} \cH (G^\sharp Z(G))
a e_{\mu_{G^\sharp Z(G)}} a^{-1}$ is isomorphic to 
\begin{equation}\label{eq:1.5}
\Big( \cH (T_\fs,W_\fs,q_\fs) \otimes \End_\C ( V_\mu ) \Big)^{X^L (\fs)} 
\rtimes \mf R_\fs^\sharp . 
\end{equation}
\item Under these isomorphisms the action of $X_\nr (G)$ on $\cH (G^\sharp Z(G))^\fs$
becomes the action of $X_\nr (L / L^\sharp) \cong X_\nr (G)$ on \eqref{eq:1.5}
via translations on $T_\fs$.
}
\end{thm}

Recall from \eqref{eq:2.39} that $T_\fs^\sharp$ is the restriction of $T_\fs$ to 
$L^\sharp$. With this torus we build an affine Hecke algebra 
$\cH (T_\fs^\sharp, W_\fs, q_\fs)$ for $G^\sharp$.

\begin{thm}\label{thm:1.2}
\textup{\cite[Theorem 4.15]{ABPS4}}
\enuma{
\item $\cH (G^\sharp)^\fs$ is Morita equivalent with 
\[
e^\sharp_{\lambda_{G^\sharp}} \cH (G^\sharp) e^\sharp_{\lambda_{G^\sharp}} =
\bigoplus\nolimits_{a \in [L / H_\lambda]} a e_{\mu_{G^\sharp}} a^{-1} 
\cH (G^\sharp) a e_{\mu_{G^\sharp}} a^{-1}
\]
\item Each of the algebras $a e_{\mu_{G^\sharp}} a^{-1} \cH (G^\sharp)
a e_{\mu_{G^\sharp}} a^{-1}$ is isomorphic to 
\[
\Big( \cH (T_\fs^\sharp,W_\fs,q_\fs) \otimes \End_\C ( V_\mu ) 
\Big)^{X^L (\fs)} \rtimes \mf R_\fs^\sharp . 
\]
}
\end{thm}

Let us describe the above actions of the group $X^G (\fs)$ explicitly. 
The action on
\begin{equation}\label{eq:1.8}
a e_{\mu_{G^\sharp Z(G)}} a^{-1} \cH (G^\sharp Z(G)) a e_{\mu_{G^\sharp Z(G)}} 
a^{-1} \cong \cH (T_\fs, W_\fs, q_\fs) \otimes \End_\C (V_\mu ) .
\end{equation}
does not depend on $a \in [L /H_\lambda]$ because 
\[
\alpha_\gamma (a f a^{-1}) = a (\alpha_\gamma (f)) a^{-1} \qquad f \in \cH (G) .
\]
The isomorphism \eqref{eq:1.12} yields an action $\alpha$ of $\Stab (\fs)^+$ on \eqref{eq:1.8}.

\begin{thm}\label{thm:1.3}
\textup{\cite[Lemmas 3.5 and 4.11]{ABPS4}}
\enuma{
\item The action of $\Stab (\fs)^+$ on $\cH (T_\fs, W_\fs, q_\fs) \otimes 
\End_\C (V_\mu)$ in Theorem \ref{thm:1.1}
preserves both tensor factors. On $\cH (T_\fs, W_\fs, q_\fs)$ it is given by
\[
\alpha_{(w,\gamma)}(\theta_x [v]) = \chi_\gamma^{-1}(x) \theta_{w(x)} [w v w^{-1}] 
\qquad x \in X^* (T_\fs), v \in W_\fs ,
\]
and on $\End_\C (V_\mu)$ by
\[
\alpha_{(w,\gamma)}(h) = J(\gamma,\omega \otimes \chi_\gamma^{-1}) \circ h \circ
J(\gamma,\omega \otimes \chi_\gamma^{-1})^{-1} .
\]
\item The subgroup of elements that act trivially is 
\[
X^L (\omega,V_\mu) = \big\{ \gamma \in X^L (\omega) \mid 
I(\gamma,\omega) |_{V_{\mu }} \in \C^\times \mathrm{id}_{V_{\mu }} \big\} .
\]
Its cardinality equals $[L : H_\lambda]$.
\item Part (a) and Theorem \ref{thm:1.1}.c also describe the action of 
$\Stab (\fs)^+ X_\nr (G)$ on $\cH (T_\fs^\sharp, W_\fs, q_\fs) \otimes 
\End_\C (V_\mu)$ in Theorem \ref{thm:1.2}. The subgroup of elements that act 
trivially on this algebra is 
\[
X^L (\omega,V_\mu) X_\nr (G ) = X^L (\omega,V_\mu) X_\nr ( L / L^\sharp) .
\]
}
\end{thm}

\section{Spectrum preserving morphisms and stratified equivalences} 
\label{sec:geomEquiv}

We will show that the Hecke algebras obtained in Theorems \ref{thm:1.1} and \ref{thm:1.2}
fit in the framework of spectrum preserving morphisms and stratified equivalence 
of finite type algebras, see \cite{ABPS8}. First we exhibit an 
algebra that interpolates between 
\[
\big( \cH (T_\fs,W_\fs,q_\fs) \otimes \End_\C (V_\mu) \big)^{X^L (\fs)}
\rtimes \mf R_\fs^\sharp
\]
and $\big( \mc O (T_\fs) \rtimes W_\fs \otimes \End_\C (V_\mu) \big)^{X^L (\fs)} 
\rtimes \mf R_\fs^\sharp$. Recall that Conditions \ref{cond} are in force and write 
\[
T_\fs = \prod\nolimits_i T_i ,\; R_\fs = \bigsqcup\nolimits_i R_i ,\; 
W_\fs = \prod\nolimits_i W (R_i) = \prod\nolimits_i S_{e_i} .
\]
Let $q_i$ be the restriction of $q_\fs : X^* (T_\fs) \rtimes W_\fs \to \R_{>0}$
to $X^* (T_i) \rtimes W(R_i)$. Recall Lusztig's asymptotic Hecke algebra 
$J(X^* (T_i) \rtimes W(R_i))$ from 
\cite{Lus3,Lus4}. We remark that, although in \cite{Lus3} it is supposed that the 
underlying root datum is semisimple, this assumption is shown to be unneccesary
in \cite{Lus4}. This algebra is unital and of finite type over 
$\mc O (T_i)^{W(R_i)}$. It has a distinguished $\C$-basis 
$\{ t_{xv} \mid x \in X^* (T_i), v \in W(R_i) \}$ and the $t_x$ with 
$x \in X^* (T_i)^{W(R_i)}$ are central. We define
\[
J (X^* (T_\fs) \rtimes W_\fs) = \bigotimes\nolimits_i J(X^* (T_i) \rtimes W(R_i)) .
\]
This is a unital finite type algebra over $\mc O (T_\fs)^{W_\fs}$, in fact for
several different $\mc O (T_\fs)^{W_\fs}$-module structures.

Lusztig \cite[\S 1.4]{Lus4} defined injective algebra homomorphisms
\begin{equation}\label{eq:4.9}
\cH (T_i, W(R_i),q_i) \xrightarrow{\phi_{i,q_i}} J (X^* (T_i) \rtimes W(R_i))
\xleftarrow{\phi_{i,1}} \mc O (T_i) \rtimes W(R_i)  
\end{equation}
with many nice properties. Among these, we record that 
\begin{equation}\label{eq:5.1}
\phi_{i,q_i} \text{ and } \phi_{i,1} \text{ are the identity on }
\C [X^* (T_i)^{W(R_i)}] \cong \mc O \big( X_\nr (Z(M_i)) \big) . 
\end{equation}
There exist $\mc O (T_i)^{W(R_i)}$-module structures on $J (X^* (T_i) \rtimes W(R_i))$
for which the maps \eqref{eq:4.9} are $\mc O (T_i)^{W(R_i)}$-linear, namely by letting
$\mc O (T_i)^{W(R_i)}$ act via the map $\phi_{i,q_i}$ or via $\phi_{i,1}$.
Taking tensor products over $i$ in \eqref{eq:4.9} and with the identity on 
$\End_\C (V_{\mu})$ gives injective algebra homomorphisms
\begin{equation}\label{eq:3.3}
\begin{array}{rrcc}
\phi_{q_\fs} : & \cH (T_\fs,W_\fs,q_\fs) \otimes \End_\C (V_\mu) & \to &
J (X (T_\fs) \rtimes W_\fs) \otimes \End_\C (V_\mu) , \\
\phi_1 : & \mc O (T_\fs) \rtimes W_\fs \otimes \End_\C (V_\mu) & \to & 
J (X (T_\fs) \rtimes W_\fs) \otimes \End_\C (V_\mu) .
\end{array}
\end{equation}
The maps $\phi_{q_\fs}$ and $\phi_1$ are $\mc O (T_\fs)^{W_\fs}$-linear with 
respect to the appropriate module structure on $J (X (T_\fs) \rtimes W_\fs)$.

\begin{lem}\label{lem:4.3}
Via \eqref{eq:3.3} the action of $\Stab (\fs)^+$ on 
$\cH (T_\fs,W_\fs,q_\fs) \otimes \End_\C (V_\mu)$ from Theorem \ref{thm:1.3}
extends canonically to an action on 
$J (X^* (T_\fs) \rtimes W_\fs) \otimes \End_\C (V_\mu)$, which stabilizes the 
subalgebra $\mc O (T_\fs) \rtimes W_\fs \otimes \End_\C (V_\mu)$.
\end{lem}
\begin{proof}
In $X^* (T_\fs) \rtimes W(G,L)$ every $w \in \mf R_\fs^\sharp$ normalizes
the subgroup $X^* (T_\fs) \rtimes W_\fs$. The group automorphism
\begin{equation}\label{eq:4.6}
xv \mapsto w xv w^{-1}  \quad \text{of} \quad X^* (T_\fs) \rtimes W_\fs 
\end{equation}
only permutes the subgroups $X^* (T_i) \rtimes W(R_i)$. In particular it
stabilizes the set of simple reflections. With the canonical
representatives for $W_\fs^\sharp$ in $G$ from \eqref{eq:1.6}, conjugation by $w$ 
stabilizes the type for $\fs_L$ (it is a product of simple
types in the sense of \cite[\S 4]{Sec3}). Hence \eqref{eq:4.6} preserves $q_\fs$. 
Thus \eqref{eq:4.6} can be factorized as
\begin{equation}\label{eq:4.40}
\prod_j \omega_j \text{ with } \omega_j \in \mathrm{Aut} 
\Big( \prod_{i : R_i = R_j, q_i = q_j} X^* (T_i) \rtimes W(R_i) \Big)
\end{equation}
The function $q_\fs$ takes the same value on all simple (affine) roots 
associated to the group for one $j$ in \eqref{eq:4.40}, so the algebra 
\begin{equation}\label{eq:4.7}
\bigotimes\nolimits_{i \mid R_i = R_j, q_i = q_j} J (X^* (T_i) \rtimes W(R_i)) 
\end{equation}
is of the kind considered in \cite[\S 1]{Lus4}. Then $\omega_j$ is an 
automorphism which fits in a group called $\Omega$ in \cite[\S 1.1]{Lus4}, 
so it gives rise to an automorphism of the algebra \eqref{eq:4.7}. In this way 
the group $\mf R_\fs^\sharp \cong \mr{Stab} (\fs)^+ / X^L (\fs)$ 
acts naturally on $J(X^* (T_\fs) \rtimes W_\fs)$. 

Since $T_\fs^{W_\fs}$ is central in $T_\fs \rtimes W_\fs$, every 
$\chi \in T_\fs^{W_\fs}$ gives rise to an algebra automorphism of 
$J(X^* (T_\fs) \rtimes W_\fs)$:
\begin{equation}\label{eq:4.8}
t_{xv} \mapsto \chi (x) t_{xv} \qquad x \in X^* (T_\fs), v \in W_\fs . 
\end{equation}
Thus we can make $\Stab (\fs)^+$ act on $J(X^* (T_\fs) \rtimes W_\fs)$ by
\[
(w,\gamma) \cdot t_{xv} = \chi_\gamma^{-1}(x) t_{w xv w^{-1}} 
\qquad x \in X^* (T_\fs), v \in W_\fs .
\]
The action of $\Stab (\fs)^+$ on $\End_\C (V_{\mu})$ may be
copied to this setting, so we can define the following action on 
$J (X^* (T_\fs) \rtimes W_\fs) \otimes \End_\C (V_{\mu})$:
\[
\alpha_{(w,\gamma)}(t_{xv} \otimes h) =  \chi_\gamma^{-1}(x) t_{w xv w^{-1}} 
\otimes  J(\gamma,\omega \otimes \chi_\gamma^{-1}) \circ h \circ
J(\gamma,\omega \otimes \chi_\gamma^{-1})^{-1} .
\]
Of course the above also works with the label function 1 instead of $q_\fs$. 
That yields a similar action of $\Stab (\fs)^+$ on 
$\mc O (T_\fs) \rtimes W_\fs \otimes \End_\C (V_{\mu})$, namely
\begin{equation}\label{eq:4.14}
\alpha_{(w,\gamma)}(xv \otimes h) =  \chi_\gamma^{-1}(x) \; w xv w^{-1}
\otimes  J(\gamma,\omega \otimes \chi_\gamma^{-1}) \circ h \circ
J(\gamma,\omega \otimes \chi_\gamma^{-1})^{-1} ,
\end{equation}
where $xv \in X^* (T_\fs) \rtimes W_\fs$. It follows from \cite[\S 1.4]{Lus4}
that $\phi_{q_\fs}$ and $\phi_1$ are now $\mr{Stab}(\fs)^+$-equivariant.
\end{proof}

\begin{lem}\label{lem:3.4}
The $\mc O (T_\fs)^{W_\fs}$-algebra homomorphisms $\phi_{q_\fs}$ and $\phi_1$
from \eqref{eq:3.3} are spectrum preserving with respect to filtrations, 
in the sense of \cite{ABPS8}. 
\end{lem}
\begin{proof}
It suffices to consider the map $\phi_{q_\fs}$, for the same
reasoning will apply to $\phi_1$.
Our argument is a generalization of \cite[Theorem 10]{BaNi}, which proves
the analogous statements for $J (X^* (T_i) \rtimes W(R_i))$. Recall the function
\begin{equation}\label{eq:4.13}
a : X^* (T_\fs) \rtimes W_\fs \to \Z_{\geq 0}
\end{equation}
from \cite[\S 1.3]{Lus4}. For fixed $n \in \Z_{\geq 0}$, the subspace of
$J(X^* (T_i) \rtimes W(R_i))$ spanned by the $t_{xv}$ with $a(xv) = n$ is a 
two-sided ideal, let us call it $J^{i,n}$. Then
\[
J(X^* (T_i) \rtimes W(R_i)) = \bigoplus\nolimits_{n \geq 0} J^{i,n}  
\]
and the sum is finite by \cite[\S 7]{Lus1}. Moreover 
\[
\cH^{i,n} := \phi_{i,q_i}^{-1} \Big( \bigoplus\nolimits_{k \geq n} J^{i,k} \Big) 
\]
is a two-sided ideal of $\cH (T_i,W(R_i),q_i)$. According to 
\cite[Corollary 3.6]{Lus3} the morphism of $\mc O (T_i)^{W(R_i)}$-algebras
\[
\cH^{i,n} / \cH^{i,n+1} \to J^{i,n} \text{ induced by } \phi_{i,q_i}
\] 
is spectrum preserving. For any irreducible $J^{i,n}$-module $M^i_J$ the 
$\cH^{i,n}$-module $\phi_{q_i,i}^* (M^i_J)$ has a distinguished quotient 
$M^i_\cH$, which is an irreducible $\cH^{i,n} / \cH^{i,n+1}$-module. 

Let $\mathbf n$ be a vector with coordinates $n_i \in \Z_{\geq 0}$ and put
$|\mathbf n| = \sum_i n_i$. We write $\mathbf n \leq \mathbf n'$ if $n_i \leq
n'_i$ for all $i$. We define the two-sided ideals
\[
\begin{array}{lllll}
J^{\mathbf n} & = & \bigotimes_i J^{i,n_i} \otimes \End_\C (V_\mu)
& \subset & J (X^* (T_\fs) \rtimes W_\fs) \otimes \End_\C (V_\mu) ,\\
\cH^{\mathbf n} & = & \bigotimes_i \cH^{i,n_i} \otimes \End_\C (V_\mu)
& \subset & \cH (T_\fs,W_\fs,q_\fs) \otimes \End_\C (V_\mu) ,\\
\cH^{\mathbf n +} & = & 
\sum_{\mathbf n' \geq \mathbf n, |\mathbf n'| = |\mathbf n| + 1} \cH^{\mathbf n'} .
\end{array}
\]
It follows from the above that the morphism of $\mc O (T_\fs)^{W_\fs}$-algebras
\begin{equation}\label{eq:4.12}
\bigotimes\nolimits_i ( \cH^{i,n_i} / \cH^{i,n_i + 1} ) 
\otimes \End_\C (V_{\mu})  
\cong \cH^{\mathbf n} / \cH^{\mathbf n +} \to J^{\mathbf n}
\end{equation}
induced by $\phi_{q_\fs}$ is spectrum preserving, and that every irreducible
$J^{\mathbf n}$-module $M_J$ has a distinguished quotient $M_\cH$ which is an 
irreducible $\cH^{\mathbf n} / \cH^{\mathbf n +}$-module.

Next we define, for $n \in \Z_{\geq 0}$: 
\begin{equation}\label{eq:4.41}
J^n := \bigoplus_{|\mathbf n| = n} J^{\mathbf n} ,\; 
\cH^n := \bigoplus_{|\mathbf n| = n} \cH^{\mathbf n} .
\end{equation}
The aforementioned properties of the map \eqref{eq:4.12} are also valid for
\begin{equation}\label{eq:4.17}
\cH^n / \cH^{n+1} \to J^n ,
\end{equation} 
which shows that $\phi_{q_\fs}$ is spectrum preserving with respect to the 
filtrations $(\cH^n )_{n \geq 0}$ and $(\oplus_{m \geq n} J^m )_{n \geq 0}$.
\end{proof}

By Lemma \ref{lem:4.3} and \eqref{eq:1.12} 
\begin{equation}\label{eq:4.10}
\begin{array}{l}
\Big( \cH (T_\fs,W_\fs,q_\fs) \otimes \End_\C (V_\mu) \Big)^{X^L (\fs)} 
\rtimes \mf R_\fs^\sharp , \\
\Big( J (X^* (T_\fs) \rtimes W_\fs) \otimes \End_\C (V_\mu) 
\Big)^{X^L (\fs)} \rtimes \mf R_\fs^\sharp , \\
\Big( \mc O (T_\fs) \rtimes W_\fs \otimes \End_\C (V_\mu) 
\Big)^{X^L (\fs)} \rtimes \mf R_\fs^\sharp
\end{array}
\end{equation}
are unital finite type $\mc O (T_\fs)^{\Stab (\fs)}$-algebras, while 
$\phi_{q_\fs}$ and $\phi_1$ provide morphisms between them. 

\begin{thm}\label{thm:4.4} 
\enuma{
\item The above morphisms between the $\mc O (T_\fs)^{\Stab (\fs)}$-algebras
\eqref{eq:4.10} are spectrum preserving with respect to filtrations. 
\item The same holds for the three algebras of \eqref{eq:4.10}
with $T_\fs^\sharp$ instead of $T_\fs$.
}
\end{thm}
\begin{proof}
(a) We use the notations from the proof of Lemma \ref{lem:3.4}.
Since Lusztig's $a$-function is constant on two-sided cells \cite[\S 1.3]{Lus4}
and conjugation by elements of $\mf R_\fs^\sharp$ preserves the set of simple 
(affine) reflections in $X^* (T_\fs) \rtimes W_\fs$:
\[
a(w xv w^{-1}) = a (xv) \text{ for all } 
x \in X^* (T_\fs), v \in W_\fs, w \in \mf R_\fs^\sharp .
\]
Hence $J^n$ and $\cH^n$ are stable under the respective actions $\alpha$ and
\eqref{eq:4.17} is $\Stab (\fs)^+$-equivariant. Let $M_J$ be an irreducible 
$J^{\mb n}$-module and regard it as a $(J^n)^{X^L (\fs)}$-module via 
the map $J^n \to J^{\mb n}$ from \eqref{eq:4.41}. By Clifford theory (see 
\cite[Appendix]{RaRa}) its decomposition is governed by a twisted group algebra
of the stabilizer of $M_J$ in $X^L (\fs)$. Since \eqref{eq:4.17} is 
$X^L (\fs)$-equivariant and $M_\cH$ is a quotient of $M_J$, the decomposition
of $M_\cH$ as module over $(\cH^n / \cH^{n+1})^{X^L (\fs)}$ is governed by the
same twisted group algebra in the same way. Therefore \eqref{eq:4.17} restricts to 
a spectrum preserving morphism of $\mc O (T_\fs)^{W_\fs \times X^L (\fs)}$-algebras
\[
(\cH^n / \cH^{n+1})^{X^L (\fs)} \to (J^n )^{X^L (\fs)}.
\]
Now a similar argument with Clifford theory for crossed product algebras shows that
\[
(\cH^n / \cH^{n+1})^{X^L (\fs)} \rtimes \mf R_\fs^\sharp \to 
(J^n )^{X^L (\fs)} \rtimes \mf R_\fs^\sharp
\]
is a spectrum preserving morphism of $\mc O (T_\fs)^{\Stab (\fs)}$-algebras.
By definition \cite[\S 5]{BaNi}, this means that the map
\begin{multline}\label{eq:5.2}
\phi'_{q_\fs} : \Big( \cH (T_\fs,W_\fs,q_\fs) \otimes \End_\C (V_\mu) 
\Big)^{X^L (\fs)} \rtimes \mf R_\fs^\sharp \to \\
\Big( J (X^* (T_\fs) \rtimes W_\fs) \otimes \End_\C (V_\mu) 
\Big)^{X^L (\fs)} \rtimes \mf R_\fs^\sharp 
\end{multline}
induced by $\phi_{q_\fs}$ is spectrum preserving with respect to filtrations. 

The same reasoning is valid with $\mc O (T_\fs) \rtimes W_\fs$ instead of
$\cH (T_\fs,W_\fs,q_\fs)$ -- it is simply the case $q_\fs = 1$ of the above.\\
(b) Recall that $T_\fs \cong X_\nr (L) / X_\nr (L,\omega)$. The torus
$T_\fs / X_\nr (L / L^\sharp Z(G))$ can be identified with 
\begin{equation}\label{eq:5.3}
X_\nr (L^\sharp Z(G)) / X_\nr (L,\omega) .
\end{equation}
Since the elements of $X_\nr (L,\omega)$ are trivial on $Z(L) \supset Z(G)$ and 
$L^\sharp \cap Z(G) \cong \mf o_F^\times$ is compact, \eqref{eq:5.3} factors as 
\[
X_\nr (L^\sharp) / X_\nr (L,\omega) \times X_\nr (Z(G)) = 
T_\fs^\sharp \times X_\nr (Z(G)) .
\]
By Theorem \ref{thm:1.1} the action of $X_\nr (L / L^\sharp Z(G)) \subset
X^L (\fs)$ on the algebras \eqref{eq:4.10} comes only from its action on the
torus $T_\fs$. Hence these three algebras do not change if we replace $T_\fs$
by \eqref{eq:5.3}. Equivalently, we may replace $T_\fs$ by 
$T_\fs^\sharp \times X_\nr (Z(G))$. It follows that 
\begin{multline*}
\Big( \cH (T_\fs,W_\fs,q_\fs) \otimes \End_\C (V_\mu) \Big)^{X^L (\fs)} 
\rtimes \mf R_\fs^\sharp \cong \\
\Big( \mc O (X_\nr (Z(G))) \otimes \cH (T_\fs^\sharp ,W_\fs,q_\fs) \otimes 
\End_\C (V_\mu) \Big)^{X^L (\fs)} \rtimes \mf R_\fs^\sharp .
\end{multline*}
The action of $\Stab (\fs)^+$ fixes $\mc O (X_\nr (Z(G)))$ pointwise,
so this equals 
\[
\Big( \cH (T_\fs^\sharp ,W_\fs,q_\fs) \otimes \End_\C (V_\mu) 
\Big)^{X^L (\fs)} \rtimes \mf R_\fs^\sharp \otimes \mc O (X_\nr (Z(G))) .
\]
The other two algebras in \eqref{eq:4.10} can be rewritten similarly.
By \eqref{eq:5.1} the morphisms $\phi_{q_\fs}$ and $\phi_1$ fix the respective
subalgebras $\mc O (X_\nr (Z(G)))$ pointwise. It follows that \eqref{eq:5.2}
decomposes as
\begin{multline*}
\phi_{q_\fs}^\sharp \otimes \mathrm{id} :
\Big( \cH (T_\fs^\sharp ,W_\fs,q_\fs) \otimes \End_\C (V_\mu) 
\Big)^{X^L (\fs)} \rtimes \mf R_\fs^\sharp \otimes \mc O (X_\nr (Z(G))) \to \\
\Big( J (X^* (T_\fs^\sharp) \rtimes W_\fs) \otimes \End_\C (V_\mu) 
\Big)^{X^L (\fs)} \rtimes \mf R_\fs^\sharp \otimes \mc O (X_\nr (Z(G))) ,
\end{multline*}
and similarly for $\phi'_1$. From part (a) we know that $\phi'_{q_\fs} = 
\phi_{q_\fs}^\sharp \otimes \mathrm{id}$ and $\phi'_1 = \phi_1^\sharp \otimes 
\mathrm{id}$ are spectrum preserving with respect to filtrations. So 
$\phi_{q_\fs}^\sharp$ and
\begin{multline*}
\phi_1^\sharp :
\Big( \mc O(T_\fs^\sharp) \rtimes W_\fs \otimes \End_\C (V_\mu) 
\Big)^{X^L (\fs)} \rtimes \mf R_\fs^\sharp \to \\
\Big( J (X^* (T_\fs^\sharp) \rtimes W_\fs) \otimes \End_\C (V_\mu) 
\Big)^{X^L (\fs)} \rtimes \mf R_\fs^\sharp 
\end{multline*}
have that property as well.
\end{proof}

With Theorem \ref{thm:4.4} we can show that the Hecke algebras for $G^\sharp$ and
for $G^\sharp Z(G)$ are stratified equivalent (see \cite{ABPS8}) to much simpler algebras.
Recall the subgroup $H_\lambda \subset L$ from \cite[Lemma 3.3]{ABPS4}.

\begin{thm}\label{thm:4.5}
\enuma{
\item The algebra $\cH (G^\sharp Z(G))^\fs$ is stratified equivalent with 
\begin{align*}
\bigoplus\nolimits_1^{[L : H_\lambda]} \Big( \mc O (T_\fs) \otimes 
\End_\C (V_\mu) \Big)^{X^L (\fs)} \rtimes W_\fs^\sharp .
\end{align*}
Here the action of $w \in W_\fs^\sharp$ is $\alpha_{(w,\gamma)}$ 
(as in Theorem \ref{thm:3.3}.a) for any $\gamma \in \Irr (L / L^\sharp Z(G))$
such that $(w,\gamma) \in \Stab (\fs)$.
\item The algebra $\cH (G^\sharp)^\fs$ is stratified equivalent with 
\[
\bigoplus\nolimits_1^{[L : H_\lambda]} \Big( \mc O (T_\fs^\sharp) 
\otimes \End_\C (V_\mu) \Big)^{X^L (\fs)} \rtimes W_\fs^\sharp ,
\] 
with respect to the same action of $W_\fs^\sharp$.
}
\end{thm}
\emph{Remark.} In principle one could factorize the above algebras according to
single Bernstein components for $G^\sharp Z(G)$ and $G^\sharp$. However, this would 
result in less clear formulas.
\begin{proof}
(a) Recall from Theorem \ref{thm:1.1} that
$\cH (G^\sharp Z(G))^\fs$ is Morita equivalent with
\begin{equation}\label{eq:3.5}
\bigoplus\nolimits_1^{[L : H_\lambda]} \Big( \cH (T_\fs,W_\fs,q_\fs) 
\otimes \End_\C (V_\mu) \Big)^{X^L (\fs)} \rtimes \mf R_\fs^\sharp .
\end{equation}
Consider the sequence of algebras 
\begin{equation}\label{eq:3.4}
\begin{aligned}
& \Big( \cH (T_\fs,W_\fs,q_\fs) \otimes \End_\C (V_\mu) 
\Big)^{X^L (\fs)} \rtimes \mf R_\fs^\sharp \\
& \to \Big( J (X^* (T_\fs) \rtimes W_\fs) \otimes \End_\C (V_\mu) 
\Big)^{X^L (\fs)} \rtimes \mf R_\fs^\sharp \\
& = \, \Big( J (X^* (T_\fs) \rtimes W_\fs) \otimes \End_\C (V_\mu) 
\Big)^{X^L (\fs)} \rtimes \mf R_\fs^\sharp \\
& \leftarrow \Big( \mc O (T_\fs) \rtimes W_\fs \otimes \End_\C (V_\mu) 
\Big)^{X^L (\fs)} \rtimes \mf R_\fs^\sharp .
\end{aligned}
\end{equation}
In Theorem \ref{thm:4.4}.a we proved that the map between the first two lines 
is spectrum preserving with respect 
to filtrations. The equality sign does nothing on the level of $\C$-algebras, 
but we use it to change the $\mc O (T_\fs)^{\Stab (\fs)}$-module structure, 
such that the map from
\begin{equation}\label{eq:4.16}
\Big( \mc O (T_\fs) \rtimes W_\fs \otimes \End_\C (V_{\mu}) 
\Big)^{X^L (\fs)} \rtimes \mf R_\fs^\sharp 
\end{equation}
becomes $\mc O (T_\fs)^{\Stab (\fs)}$-linear. By Theorem \ref{thm:4.4}.a 
that map is also spectrum preserving with respect to filtrations. 

Every single step in the above sequence is an instance of stratified equivalence
from \cite{ABPS8} (the second step by definition), so $\cH (G^\sharp Z(G))^\fs$ 
is stratified equivalent with a direct sum of $[L : H_\lambda ]$ copies of 
\eqref{eq:4.16}. Since $\chi_\gamma \in T_\fs$ in 
\eqref{eq:4.14} is $W_\fs$-invariant, the actions of $X^L (\fs)$ and 
$W_\fs$ on $\mc O (T_\fs) \otimes \End_\C (V_\mu)$ commute. This observation 
and \eqref{eq:1.6} allow us to identify \eqref{eq:4.16} with
\begin{equation}\label{eq:3.6}
\Big( \Big( \mc O (T_\fs) \otimes \End_\C (V_\mu) 
\Big)^{X^L (\fs)} \rtimes W_\fs \Big) \rtimes \mf R_\fs^\sharp =
\Big( \mc O (T_\fs) \otimes \End_\C (V_\mu) 
\Big)^{X^L (\fs)} \rtimes W_\fs^\sharp . 
\end{equation}
The description of the action of $W_\fs^\sharp$ can be derived from 
Theorem \ref{thm:1.3}. \\
(b) This follows from Theorem \ref{thm:1.2} and the same proof as for part (a).
\end{proof}

\section{Extended quotients for inner forms of $\GL_n$}

It turns out that via \eqref{eq:1.30} any Bernstein component for $G$ can be described 
in a canonical way with an extended quotient. Before we prove that, we recall the 
parametrization of irreducible representations of $\cH (T_\fs,W_\fs,q_\fs)$. 

Let $\check G_\fs$ be the complex reductive group with root datum $(X^* (T_\fs ), 
R_\fs ,X_* (T_\fs ),R_\fs^\vee )$, it is isomorphic to $\prod_i \GL_{e_i}(\C)$,
embedded in $\check G = \GL_{md}(\C)$ as
\[
\check G_\fs = Z_{\check G}(\check L) =
Z_{\GL_{md}(\C)} \bigg(\prod\nolimits_i\GL_{m_id}(\C)^{e_i}\bigg). 
\]
Recall that a Kazhdan--Lusztig triple for $\check G_\fs$ consists of:
\begin{itemize}
\item a unipotent element $u = \prod_i u_i \in \check G_\fs$;
\item a semisimple element $t_q \in \check G_\fs$ with $t_q u t_q^{-1} = u^{q_\fs} := 
\prod_i u_i^{q_i}$;
\item a representation $\rho_q \in \Irr \big( \pi_0 (Z_{\check G_\fs}(t_q,u)) \big)$
which appears in the homology of variety of Borel subgroups of $\check G_\fs$ 
containing $\{t_q,u\}$.
\end{itemize}
Typically such a triple is considered up to $\check G_\fs$-conjugation, 
we denote its equivalence class by $[t_q,u,\rho_q]_{\check G_\fs}$. These equivalence 
classes parametrize $\Irr (\cH (T_\fs,W_\fs,q_\fs))$ in a natural way, see \cite{KaLu}. 
We denote that by
\begin{equation}\label{eq:5.17}
[t_q,u,\rho_q]_{\check G_\fs} \mapsto \pi (t_q,u,\rho_q) .
\end{equation}
Recall from \cite[\S 7]{ABPS6} that an affine Springer parameter for $\check G_\fs$ 
consists of:
\begin{itemize}
\item a unipotent element $u = \prod_i u_i \in \check G_\fs$;
\item a semisimple element $t \in Z_{\check G_\fs}(u)$;
\item a representation $\rho \in \Irr \big( \pi_0 (Z_{\check G_\fs}(t,u)) \big)$ 
which appears in the homology of variety of Borel subgroups of $\check G_\fs$ 
containing $\{t,u\}$.
\end{itemize}
Again such a triple is considered up to $\check G_\fs$-conjugacy, and then denoted
$[t,u,\rho ]_{\check G_\fs}$. Kato \cite{Kat} established a natural bijection 
between such equivalence classes and $\Irr (\mc O (T_\fs) \rtimes W_\fs)$, say 
\begin{equation}\label{eq:5.18}
[t,u,\rho]_{\check G_\fs} \mapsto \tau (t,u,\rho) .
\end{equation}
For a more explicit description, we note that $Z_{\check G_\fs}(t)$ is a connected
reducitive group with Weyl group $W_{\fs,t}$, and  that $(u,\rho)$ represents a
Springer parameter for $W_{\fs,t}$. Via the classical Springer correspondence
\begin{equation}\label{eq:5.4}
(u,\rho) \text{ determines an irreducible } W_{\fs,t}\text{-representation } \pi (u,\rho). 
\end{equation}
Then \cite{Kat} and \eqref{eq:5.18} work out to
\begin{equation}\label{eq:5.5}
\tau (t,u,\rho) = \mr{ind}_{\mc O (T_\fs) \rtimes W_{\fs,t}}^{\mc O (T_\fs) \rtimes W_\fs}
(\C_t \otimes \pi (u,\rho)) .
\end{equation}
From \cite[\S 2.4]{KaLu} we get a canonical bijection between Kazhdan--Lusztig 
triples and affine Springer parameters:
\begin{equation}\label{eq:5.19}
[t_q,u,\rho_q]_{\check G_\fs} \longleftrightarrow [t,u,\rho]_{\check G_\fs} .
\end{equation}
Basically it adjusts $t_q$ in a minimal way so that it commutes with $u$, 
and then there is only one consistent way to modify $\rho_q$ to $\rho$.

Via Lemma \ref{lem:3.4} the algebra homomorphisms \eqref{eq:3.3} 
give rise to a bijection
\begin{equation}\label{eq:5.20}
\Irr \big( \cH (T_\fs,W_\fs,q_\fs) \big) \longleftrightarrow 
\Irr \big( \mc O (T_\fs) \rtimes W_\fs \big) .
\end{equation}
We showed in \cite[(90)]{ABPS6} that \eqref{eq:5.20} is none other than the composition
of \eqref{eq:5.19} with \eqref{eq:5.18} and the inverse of \eqref{eq:5.17}:
\begin{equation}\label{eq:5.21}
\pi (t_q,u,\rho_q) \longleftrightarrow \tau (t,u,\rho) .
\end{equation}

\begin{thm}\label{thm:4.6}
The Morita equivalence $\cH (G)^\fs \sim_M \cH (T_\fs,W_\fs,q_\fs)$ and 
\eqref{eq:5.20} give rise to a bijection
\begin{equation}\label{eq:4.37}
\Irr^\fs (G) \longleftrightarrow T_\fs \q W_\fs
\end{equation}
with the following properties:
\begin{enumerate}
\item Let $T_{\fs,\uni}$ be the maximal compact subtorus of $T_\fs$ and let 
$\Irr_\temp (G) \subset \Irr (G)$ be the subset of tempered representations. Then
\eqref{eq:4.37} restricts to a bijection $\Irr_\temp^\fs (G) 
\longleftrightarrow T_{\fs,\uni} \q W_\fs$. 
\item \eqref{eq:4.37} can be obtained from its restriction to tempered 
representations by analytic continuation, as in \cite{ABPS1}. For instance, suppose
that $\sigma \in \Irr_\temp^{\fs_{L'}}(L')$ for some standard parabolic $P' = L' U'
\supset P = L U$, and that $I_{P'}^G (\sigma \otimes \chi)$ is mapped to 
$\tau (t \chi,u,\rho)$ for almost all unitary $\chi \in X_\nr (L')$. Then, whenever
$\chi_\nr (L')$ and $I_{P'}^G (\sigma \otimes \chi)$ is irreducible, it is mapped to
$\tau (t \chi,u,\rho)$.
\item If $\pi \in \Irr^\fs_\temp (G)$ is mapped to $[t,\rho'] \in T_{\fs,\uni} \q 
W_\fs $ and has cuspidal support $W_\fs \sigma \in T_\fs / W_\fs$, then $W_\fs t$
is the unitary part of $W_\fs \sigma$, with respect to the polar decomposition
\[
T_\fs = T_{\fs,\uni} \times \Hom_\Z (X^* (T_\fs),\R_{>0}) . 
\]
\item In the notation of (3), suppose that the parameter of $\rho' \in \Irr (W_{\fs,t})$ 
in the classical Springer correspondence \eqref{eq:5.4} involves a unipotent class $[u]$ 
which is distinguished in a Levi subgroup $\check M \subset Z_{\check G_\fs}(t)$. 
Then $\pi = I_{PM}^G (\delta)$, where $M \supset L$ is the unique standard Levi subgroup 
of $G$ corresponding to $\check M$ and $\delta \in \Irr^{[L,\omega]_M}_\temp (M)$ 
is square-integrable modulo centre.
\end{enumerate}
Moreover \eqref{eq:4.37} is the unique bijection with the properties (1)--(4).
\end{thm}
\begin{proof}
The Morita equivalence \eqref{eq:1.30} gives a bijection
\begin{equation}\label{eq:4.38}
\Irr^\fs (G) \longleftrightarrow \Irr (\cH (T_\fs,W_\fs,q_\fs)) .
\end{equation}
Via Lemmas \ref{lem:3.4} and \ref{lem:B.6} the right hand side is in bijection
with 
\begin{equation}\label{eq:5.6}
\Irr (\cO (T_\fs) \rtimes W_\fs) \cong T_\fs \q W_\fs  . 
\end{equation}
In this way we define the map \eqref{eq:4.37}.\\
(1) It is easy to check from \cite[Th\'eor\`eme 4.6]{Sec3} and \cite[Theorem C]{SeSt6} 
that the Morita equivalence $\cH (G)^\fs \sim_M \cH (T_\fs,W_\fs,q_\fs)$ preserves
the canonical involution and trace (maybe up to a positive scalar). Accepting that,
\cite[Theorem 10.1]{DeOp} says that the ensueing bijection between $\Irr^\fs (G)$ and 
$\Irr \big( \cH (T_\fs,W_\fs,q_\fs) \big)$ respects the subsets of tempered representations.
By \cite[Proposition 9.3]{ABPS6} the latter subset corresponds to the set of 
Kazhdan--Lusztig triples such that the $t$ in \eqref{eq:5.19} lies in $T_{\fs,\uni}$.\\
(2) Consider the bijection \eqref{eq:5.20} and its formulation \eqref{eq:5.21}. 
Here the representations are tempered if and only if $t \in T_\fs$ is unitary.
Thus \eqref{eq:5.21} for tempered representations determines the bijection
\eqref{eq:5.20}, by analytic continuation (in the parameters $t$ and $t_q$)
of the formula. 

The relation between $\Irr^\fs (G)$ and $\Irr_\temp^\fs (G)$ is similar, see
\cite[Proposition 2.1]{ABPS2}. Hence \eqref{eq:4.38} is can also be deduced
from its restriction to tempered representations, with the method from
\cite[\S 4]{ABPS2}.\\
(3) In \cite[Th\'eor\`eme 4.6]{Sec3} a $\fs_L$-type $(K_L,\lambda_L)$ is constructed,
with
\[
e_{\lambda_L} \cH (L) e_{\lambda_L} \cong \cO (T_\fs) \otimes \End_\C (V_\lambda) . 
\]
It \cite{SeSt6} it is shown that it admits a cover $(K_G,\lambda_G)$ with 
\[
e_{\lambda_G} \cH (G) e_{\lambda_G} \cong 
\cH (T_\fs,W_\fs,q_\fs) \otimes \End_\C (V_\lambda) ,
\]
see also \cite[\S 4.1]{ABPS4}. It follows from \cite[Proposition 3.15]{ABPS4} that
\eqref{eq:4.38} arises from this cover of a $\fs_L$-type.
With \cite[\S 7]{BuKu3} this implies that \eqref{eq:4.37} translates the 
cuspidal support of a $(\pi,V_\pi) \in \Irr^\fs (G)$ to the unique $W_\fs t_q \in 
T_\fs / W_\fs$ such that $e_{\lambda_G} V_\pi$ is a subquotient of 
$\ind_{\cO (T_\fs)}^{\cH (T_\fs,W_\fs,q_\fs)} (\C_{t_q}) \otimes V_\lambda$.
It follows from \cite[(33) and Lemma 7.1]{ABPS6} that the bijection \eqref{eq:5.21}
sends any tempered irreducible subquotient of  
$\ind_{\cO (T_\fs)}^{\cH (T_\fs,W_\fs,q_\fs)} (\C_{t_q})$ to an irreducible
$\cO (T_\fs) \rtimes W_\fs$-representation with $\cO (T_\fs)$-weights $W_\fs (t_q
\, |t_q|^{-1})$. The associated element of $T_\fs \q W_\fs$ is then
$[t = t_q \, |t_q|^{-1},\rho]$ with $\rho \in \Irr (W_{\fs,t})$.\\
(4) Since $W_{\fs,t}$ is a direct product of symmetric groups, the representations $\rho'$ 
of the component groups in the Springer parameters are all trivial. By \eqref{eq:5.4}, 
\eqref{eq:5.5} and \eqref{eq:5.21} the $\cH (T_\fs,W_\fs,q_\fs)$-representation 
associated to $[t,\rho']$ is $\pi (t_q,u,\mathrm{triv})$. Then $(t_q,u,\mathrm{triv})$ is 
also a Kazhdan--Lusztig triple for $\cH (T_\fs,W_{\fs,M},q_\fs)$ and by \cite[\S 7.8]{KaLu}
\[
\pi (t_q,u,\mathrm{triv}) = \ind_{\cH^M}^\cH \pi_M (t_q,u,\mathrm{triv}). 
\]
By \cite[Proposition 9.3]{ABPS6} (see also \cite[Theorem 8.3]{KaLu}) 
$\pi_M (t_q,u,\mathrm{triv})$ is essentially square-integrable and tempered, that is,
square-integrable modulo centre. 

Since $(K_G,\lambda_G)$ is a a cover of a $\fs_L$-type $(K_L,\lambda_L)$, there is
a a $[L,\omega]_M$-type $(K_M,\lambda_M)$ which covers $(K_L,\lambda_L)$ and is
covered by $(K_G,\lambda_G)$. By \cite[Proposition 16.6]{ABPS6} 
$\pi_M (t_q,u,\mathrm{triv})$ corresponds to a $M$-representation $\delta$ which
is square-integrable modulo centre. By \cite[Corollary 8.4]{BuKu3} the bijection 
\eqref{eq:4.38} respects parabolic induction, so $\pi (t_q,u,\mathrm{triv})$
corresponds to $I_{PM}^G (\delta)$.\\
Now we check that \eqref{eq:4.37} is canonical in the specified sense. By (1) and
(2) it suffices to do so for tempered representations. For $\pi \in 
\Irr^\fs_\temp (G)$, property (3) determines the $W_\fs$-orbit $W_\fs t$. Fix a
$t$ in this orbit. By a result of Harish-Chandra \cite[Proposition III.4.1]{Wal}
there are a Levi subgroup $M \subset G$ containing $L$ and a square-integrable
(modulo centre) representation $\delta \in \Irr (M)$ such that $\pi$ is a 
subquotient of $I_{PM}^G (\delta)$. Moreover $(M,\delta)$ is unique up to conjugation.

For $t \in T_{\fs,\uni} ,\; W_{\fs,t}$ is a product of symmetric groups $S_e$ and
$Z_{\check G_\fs}(t)$ is a product of groups of the form $\GL_e (\C)$. Hence the 
Springer correspondence for $W_{\fs,t}$ is a bijection between $\Irr (W_{\fs,t})$ and 
unipotent classes in $Z_{\check G_\fs}(t)$. Every Levi subgroup of $\GL_e (\C)$ 
has a unique distinguished unipotent class, and these exhaust the unipotent classes
in $\GL_e (\C)$. Hence $\Irr (W_{\fs,t})$ is also in canonical bijection with the set 
of conjugacy classes of Levi subgroups $\check M \subset Z_{\check G_\fs}(t)$. 

Viewed in this light, properties (3) and (4) entail that for every pair $(\check M,t)$
as above there is precisely one square-integrable modulo centre $\delta \in \Irr (M)$
such that $W_\fs t$ is the unitary part of the cuspidal support of 
$I_{PM}^G (\delta)$. Thus (3) and (4) determine the (tempered) $G$-representation 
associated to $[t,\rho] \in T_{\fs,\uni} \q W_\fs$.
\end{proof}

\section{Twisted extended quotients for inner forms of $SL_n$}
\label{sec:extquot}

Twisted extended quotients appear naturally in the description of the 
Bernstein components for $L^\sharp Z(G)$ and $L^\sharp$.

\begin{lem}\label{lem:4.7}
Let $\fs_L = [L,\omega]_L$ and define a two-cocycle $\kappa_\omega$ by \eqref{eq:2.21}.
\enuma{
\item Equation \eqref{eq:2.2} for $L$ determines bijections
\begin{align*}
& (T_\fs \q X^L (\fs) )_{\kappa_\omega} \to \Irr^{\fs_L} (L^\sharp Z(G)) , \\
& (T_\fs \q X^L (\fs) X_\nr (L / L^\sharp) )_{\kappa_\omega} = (T_\fs \q X^L (\fs) 
)_{\kappa_\omega} / X_\nr (L^\sharp Z(G) / L^\sharp) \to \Irr^{\fs_L} (L^\sharp) .
\end{align*}
\item The induced maps
\[
\Irr^{\fs_L} (L^\sharp Z(G)) \to T_\fs / X^L (\fs) \quad \text{and} \quad
\Irr^{\fs_L} (L^\sharp) \to T_\fs / X^L (\fs) X_\nr (L / L^\sharp)
\]
are independent of the choice of $\kappa_\omega$.
\item Let $T_{\fs,\uni}$ be the real subtorus of unitary representations in
$T_\fs$.
%and define $T_{\fs,un}^\sharp = T_{\fs,un} \cap T_\fs^\sharp$. 
The subspace of tempered representations $\Irr_\temp^{\fs_L}(L^\sharp Z(G))$
corresponds to $(T_{\fs,\uni} \q X^L (\fs) )_{\kappa_\omega}$. Similarly
$\Irr^{\fs_L}_\temp (L^\sharp)$ is obtained by restricting the second line
of part (a) to $T_{\fs,\uni}$.
}
\end{lem}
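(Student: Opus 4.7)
The plan is to derive all three parts directly from the equations \eqref{eq:4.11} and \eqref{eq:4.4} together with Lemma \ref{lem:4.1}, treating the twisted extended quotient as the concrete construction
\[
(T_\fs \q X^L(\fs))_{\kappa_\omega} = \{(t,\rho) : t \in T_\fs, \rho \in \Irr(\C[X^L(\fs)_t,\kappa_\omega(t)])\}/X^L(\fs).
\]
For part (a), I would first identify the stabilizer $X^L(\fs)_t$ of an arbitrary point $t = \omega \otimes \chi \in T_\fs$ under the action $\gamma \cdot t = \omega \otimes \chi\chi_\gamma$ coming from \eqref{eq:4.3}. Since $\chi_\gamma$ is well-defined modulo $X_\nr(L,\omega)$ and $\gamma \mapsto \chi_\gamma$ was shown in the proof of Lemma \ref{lem:4.1} (equivalently, Proposition \ref{prop:4.2}(b)) to be a homomorphism with kernel $X^L(\omega)$, the stabilizer of every $t$ is precisely $X^L(\omega)$, independently of $t$. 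Moreover, by Lemma \ref{lem:4.1} the action of this stabilizer on the second factor of $T_\fs \times \Irr(\C[X^L(\omega),\kappa_\omega])$ in \eqref{eq:4.11} is by the characters $\phi_{\omega,\gamma}$, which were constructed via conjugation by intertwiners $I(\gamma,\omega)$ satisfying the $\kappa_\omega$-relation \eqref{eq:4.5}. Hence the parametrization \eqref{eq:4.11} is exactly the twisted extended quotient $(T_\fs \q X^L(\fs))_{\kappa_\omega}$. The second bijection then follows from \eqref{eq:4.4}: the additional quotient by $X_\nr(L^\sharp Z(G) / L^\sharp)$ comes from a group that acts only on the $T_\fs$ coordinate (by translation via $\chi_\gamma = \gamma$ as in \eqref{eq:1.55}) and trivially on $\Irr(\C[X^L(\omega),\kappa_\omega])$, so it descends to a further quotient of the twisted extended quotient by $X^L(\fs) X_\nr (L / L^\sharp)$.

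Part (b) is immediate once (a) is in hand: the induced map $\Irr^{\fs_L}(L^\sharp Z(G)) \to T_\fs / X^L(\fs)$ sends the class of $(t,\rho)$ to the $X^L(\fs)$-orbit of $t$, which involves no choice of $\kappa_\omega$-representative; different choices of the intertwiners $I(\gamma,\omega)$ only rescale the cocycle $\kappa_\omega$ within its cohomology class and relabel $\rho$, without affecting the $T_\fs$-coordinate. Similarly for $L^\sharp$.

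For part (c), the main point is to show that an irreducible subquotient of $\Res^L_{L^\sharp Z(G)}(\omega \otimes \chi)$ is tempered precisely when $\omega \otimes \chi$ is tempered, i.e. when $\chi \in X_\unr(L)$ under the polar decomposition \eqref{eq:2}. One direction uses that $\omega$ can be chosen unitary in its inertial class, so $\omega \otimes \chi$ is tempered iff $\chi$ is unitary (cuspidal representations of a Levi are tempered iff they are unitary). The other direction follows because the finite-index inclusion $L^\sharp Z(G) \subset L$ preserves and reflects temperedness under restriction (bounded matrix coefficients modulo the centre are detected on any cofinite subgroup containing the centre); more concretely, Clifford theory \eqref{eq:2.2} writes $\Res^L_{L^\sharp Z(G)}(\omega \otimes \chi)$ as a finite direct sum of summands with conjugate matrix coefficients, so one is tempered iff all are, iff $\omega \otimes \chi$ is. The same Clifford-theoretic argument applies verbatim to $L^\sharp$ using \eqref{eq:4.4}. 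The main obstacle, and really the only delicate point, is the equivalence of temperedness between $\omega \otimes \chi$ and its irreducible restrictions, since $L/L^\sharp Z(G)$ need not be compact for inner forms; this is handled by invoking the compactness of the kernel of the reduced norm on $L/Z(L)$ together with the fact that the polar decomposition of $T_\fs$ is compatible with the analogous decompositions on $L^\sharp Z(G)$ and $L^\sharp$ via \eqref{eq:2.16}.
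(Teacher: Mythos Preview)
Your proof is correct and follows essentially the same route as the paper: part (a) is recognized as a direct reformulation of \eqref{eq:4.11} and \eqref{eq:4.4} once the stabilizer $X^L(\fs)_t = X^L(\omega)$ is identified, part (b) is the observation that changing $\kappa_\omega$ only permutes the $\rho$-labels, and part (c) reduces to the equivalence of temperedness under the decomposition \eqref{eq:2.2}. One small point the paper makes explicit and you leave implicit: the equality in the second line of (a) requires that the residual action of $X_\nr(L^\sharp Z(G)/L^\sharp)$ on $(T_\fs \q X^L(\fs))_{\kappa_\omega}$ is \emph{free}, so that the isotropy groups for $X^L(\fs)X_\nr(L/L^\sharp)$ coincide with those for $X^L(\fs)$ and the same cocycle $\kappa_\omega$ can be used; you assert the action is only on the $T_\fs$-coordinate, which is true, but freeness is what actually justifies the equality of twisted extended quotients. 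Conversely, your part (c) is more careful than the paper's, which simply asserts that a summand of $\Res^L_{L^\sharp Z(G)}(\omega\otimes\chi)$ is tempered iff $\omega\otimes\chi$ is; your discussion of why this holds despite $L/L^\sharp Z(G)$ not being compact is a worthwhile addition.
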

\begin{proof}
(a) Apart from the equality, this is a reformulation of the last page of Section 
\ref{sec:tori}. For the equality, we note that by \eqref{eq:4.4} the action of
\[
X_\nr (L^\sharp Z(G) / L^\sharp) \cong X_\nr (L / L^\sharp) / (X^L (\fs) \cap
X_\nr (L / L^\sharp)) 
\]
on $(T_\fs \q X^L (\fs) )_{\kappa_\omega}$ is free. Hence the isotropy groups for
the action of $X^L (\fs) X_\nr (L / L^\sharp)$ are the same as for $X^L (\fs)$,
and we can use the same 2-cocycle $\kappa_\omega$ to construct a twisted extended
quotient. \\
(b) By \eqref{eq:2.2} a different choice of $\kappa_\omega$ in part (a) would only 
lead to the choice of another irreducible summand of $\Res^G_{G^\sharp}(\pi)$ for 
$\pi \in T_\fs$, and similarly for $G^\sharp Z(G)$.\\
(c) Since $\omega$ is supercuspidal, the set of tempered representations in
$T_\fs = \Irr^{\fs_L}(L)$ is $T_{\fs,\uni}$.
In the decomposition \eqref{eq:2.2}, an irreducible representation of $L^\sharp$
or $L^\sharp Z(G)$ is tempered if and only if it is contained in a tempered
$L$-representation $\omega \otimes \chi$. This proves the statement for $L$. 
The claim for $L^\sharp$ follows
upon dividing out the free action of $X_\nr (L^\sharp Z(G) / L^\sharp)$.
\end{proof}

The subgroup $X^L (\omega,V_\mu)$ acts trivially on
$\mc O (T_\fs) \otimes \End_\C (V_\mu)$, and for that reason it can be pulled
out of the extended quotient from Lemma \ref{lem:4.7}.

\begin{lem}\label{lem:4.8}
There are bijections
\begin{align*}
& (T_\fs \q X^L (\fs) )_{\kappa_\omega} \longleftrightarrow \big( T_\fs \q X^L (\fs) 
/ X^L (\omega,V_\mu) \big)_{\kappa_\omega} \times \Irr (X^L (\omega,V_\mu)) , \\
& (T_\fs \q X^L (\fs) X_\nr (L / L^\sharp) )_{\kappa_\omega} \longleftrightarrow 
\big( T_\fs \q X \big)_{\kappa_\omega} \times \Irr (X^L (\omega,V_\mu)),
\end{align*}
where $X = X^L (\fs) X_\nr (L / L^\sharp) / X^L (\omega,V_\mu)$.
They fix the coordinates in $T_\fs$.
\end{lem}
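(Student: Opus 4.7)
My plan is to reduce both bijections to a direct-product decomposition of the twisted group algebra $\C[X^L(\omega), \kappa_\omega]$, using that the stabilizer in $X^L(\fs)$ of every $t \in T_\fs$ equals $X^L(\omega)$. Indeed, by Proposition \ref{prop:4.2}(b) the map $\gamma \mapsto \chi_\gamma$ descends to an injective homomorphism $X^L(\fs)/X^L(\omega) \hookrightarrow T_\fs$, and the $X^L(\fs)$-action on $T_\fs$ is translation through this map. Since $X^L(\fs)$ is abelian, its action on the (constant) stabilizer algebra is trivial, so the twisted extended quotient simplifies to
\[
(T_\fs \q X^L(\fs))_{\kappa_\omega} \;\cong\; (T_\fs / X^L(\fs)) \times \Irr(\C[X^L(\omega), \kappa_\omega]),
\]
and analogously after replacing $X^L(\fs)$ by $X^L(\fs)/X^L (\omega,V_\mu)$ (which has the same orbits on $T_\fs$, since $X^L (\omega,V_\mu) \subset X^L(\omega)$ acts trivially). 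So the first bijection is reduced to producing
\[
\Irr(\C[X^L(\omega), \kappa_\omega]) \;\longleftrightarrow\; \Irr(\C[X^L(\omega)/X^L (\omega,V_\mu), \kappa_\omega']) \times \Irr(X^L (\omega,V_\mu))
\]
for an appropriate cocycle $\kappa_\omega'$.

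The first step of the algebraic reduction is to show that $X^L (\omega,V_\mu)$ lies in the radical of $\kappa_\omega$ on $X^L(\omega)$ and that $\kappa_\omega$ restricted to $X^L (\omega,V_\mu) \times X^L (\omega,V_\mu)$ is a coboundary. Writing $I(\gamma, \omega)|_{V_\mu} = c(\gamma)\, \mathrm{id}$ for $\gamma \in X^L (\omega,V_\mu)$, restriction of \eqref{eq:2.21} to $V_\mu$ immediately yields $\kappa_\omega(\gamma_1, \gamma_2) = c(\gamma_1) c(\gamma_2)/c(\gamma_1 \gamma_2)$, which is a coboundary. For radicality, compare $I(\gamma, \omega) I(\gamma', \omega)$ and $I(\gamma', \omega) I(\gamma, \omega)$ on the $I(\gamma', \omega)$-stable subspace $V_\mu$, for $\gamma \in X^L (\omega,V_\mu)$ and $\gamma' \in X^L(\omega)$: both restrict to $c(\gamma) I(\gamma', \omega)|_{V_\mu}$, and \eqref{eq:2.21} combined with injectivity of $I(\gamma\gamma', \omega)$ on $V_\mu$ forces $\kappa_\omega(\gamma, \gamma') = \kappa_\omega(\gamma', \gamma)$.

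The second step is to use the trivialization $c$ to embed $\C[X^L (\omega,V_\mu)]$ as a central subalgebra of $\C[X^L(\omega), \kappa_\omega]$ and to decompose via the central idempotents $e_\chi$ indexed by $\chi \in \Irr(X^L (\omega,V_\mu))$. This gives $\C[X^L(\omega), \kappa_\omega] = \bigoplus_\chi A_\chi$ with $A_\chi \cong \C[X^L(\omega)/X^L (\omega,V_\mu), \kappa_\chi]$ after choosing a set-theoretic section of $X^L(\omega) \to X^L(\omega)/X^L (\omega,V_\mu)$. The cocycles $\kappa_\chi$ differ from a single $\kappa_\omega'$ by the pushforward $\chi \circ n$ of the factor-set cocycle $n$ of the extension $X^L (\omega,V_\mu) \to X^L(\omega) \to X^L(\omega)/X^L (\omega,V_\mu)$. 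Because every character of the finite abelian group $X^L (\omega,V_\mu)$ extends to $X^L(\omega)$, each $\chi \circ n$ is a coboundary, so all the $\kappa_\chi$ are mutually cohomologous and the displayed bijection follows.

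The second bijection of the lemma is then obtained by invoking the equality in Lemma \ref{lem:4.7}(a), which expresses $(T_\fs \q X^L(\fs) X_\nr(L/L^\sharp))_{\kappa_\omega}$ as the quotient of $(T_\fs \q X^L(\fs))_{\kappa_\omega}$ by the free action of $X_\nr (L^\sharp Z(G)/L^\sharp)$; this action affects only the $T_\fs$-coordinate and therefore commutes with the decomposition just obtained, producing the claimed product with $X = X^L(\fs) X_\nr(L/L^\sharp)/X^L (\omega,V_\mu)$. Both bijections manifestly preserve the $T_\fs$-coordinate by construction. The main obstacle is the cohomological step showing that the cocycles $\kappa_\chi$ are all cohomologous to one another; this is precisely where the abelianness of $X^L(\fs)$ and the consequent extendibility of characters from $X^L (\omega,V_\mu)$ to $X^L(\omega)$ are indispensable.
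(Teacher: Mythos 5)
Your argument is correct, but it takes a genuinely different route from the paper's. The paper's proof realizes $(T_\fs \q X^L (\fs))_{\kappa_\omega}$ as the space of irreducible representations of the concrete algebra $e^\fs_L \cH (L)^{X^L(\fs)} e^\fs_L \cong \bigoplus_{a \in [L/H_\lambda]} \big( \mc O (T_\fs) \otimes \End_\C (V_\mu) \big)^{X^L(\fs)/X^L(\omega,V_\mu)}$, invokes the group isomorphism $L/H_\lambda \cong \Irr (X^L(\omega,V_\mu))$ from \cite{ABPS5}, checks that every irreducible representation of $\C[X^L(\omega)/X^L(\omega,V_\mu),\kappa_\omega]$ occurs in $V_\mu$, and then applies Lemma \ref{lem:B.7}. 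You instead argue purely inside the twisted group algebra $\C[X^L(\omega),\kappa_\omega]$: the scalarity of $I(\gamma,\omega)$ on $V_\mu$ for $\gamma \in X^L(\omega,V_\mu)$ yields both the coboundary trivialization on $X^L(\omega,V_\mu)$ and the centrality of that subgroup (your commutation argument on the $I$-stable subspace $V_\mu$ is valid because $I(\gamma\gamma',\omega)$ is injective there), and the block decomposition by central characters together with the extendability of characters of $X^L(\omega,V_\mu)$ to the finite abelian group $X^L(\omega)$ makes all block cocycles cohomologous. This is more self-contained --- it avoids $H_\lambda$, the Morita equivalences and Lemma \ref{lem:B.7} entirely --- at the price of producing a bijection that depends on choices (orbit representatives, a section of $X^L(\omega) \to X^L(\omega)/X^L(\omega,V_\mu)$, character extensions), whereas the paper's version is pinned to algebra isomorphisms that are reused in Theorem \ref{thm:4.9}. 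One caveat: your justification that the action on the stabilizer algebra is trivial "since $X^L(\fs)$ is abelian" is not right as stated; by Lemma \ref{lem:4.1} the connecting maps act on $\Irr(\C[X^L(\omega),\kappa_\omega])$ through the generally nontrivial character twists $\phi_{\omega,\gamma}$. The product decomposition you want nevertheless holds, because the stabilizer $X^L(\omega)$ of each point acts by inner automorphisms, so each class has a well-defined fiber representative over a chosen orbit representative --- but the correct reason is constancy of stabilizers plus innerness on the isotropy group, not triviality of the whole action.
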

\begin{proof}
In Lemma \ref{lem:4.7}.a we saw that $(T_\fs \q X^L (\fs) )_{\kappa_\omega}$ is in 
bijection with \\ $\Irr \big( \cH (L^\sharp Z(G))^{\fs_L} \big)$. 
By \cite[(169)]{ABPS4} $\cH (L^\sharp Z(G))^{\fs_L}$ is Morita equivalent with 
$(\cH (L)^{\fs_L} )^{X^L (\fs)}$ and with the subalgebra 
\begin{multline}\label{eq:4.22}
e^\fs_L \cH (L)^{X^L (\fs)} e^\fs_L \cong \big( \mc O (T_\fs) \otimes 
\End_\C (e^\fs_L V_\omega) \big)^{X^L (\fs)} \\
\cong \bigoplus\nolimits_{a \in [L / H_\lambda]} \big( \mc O (T_\fs) \otimes 
\End_\C (V_\mu) )^{X^L (\fs) / X^L (\omega,V_\mu)} .
\end{multline}
Here the $X^L (\fs)$-action on the middle term comes from an isomorphism 
\[
\End_\C (e^\fs_L V_\omega) \cong \End_\C (V_\mu) \otimes \C [L / H_\lambda] 
\otimes \C[L / H_\lambda]^* .
\]
We recall that by \cite[Lemma 3.5]{ABPS4} there is a group isomorphism
\begin{equation}\label{eq:4.21} 
L / H_\lambda \cong \Irr (X^L (\omega,V_\mu)) . 
\end{equation}
By the above Morita equivalences, Lemma \ref{lem:4.7}.a and Clifford theory, the set 
\begin{equation}\label{eq:4.45}
\{ \rho \in \Irr (\C [X^L (\omega),\kappa_\omega]) : \rho \big|_{X^L (\omega,V_\mu)}
= \mathrm{triv} \} 
\end{equation}
parametrizes the irreducible representations of \eqref{eq:4.22} associated to
a fixed $\chi \in T_\fs$ and the trivial character of $X^L (\omega,V_\mu)$.

For any $\chi \in T_\fs$, the $X^L (\fs) / X^L (\omega, V_\mu)$-stabilizer of the
irreducible representation $\C_\chi \otimes V_\mu$ of $\mc O (T_\fs) \otimes \End (V_\mu)$
is $X^L (\omega) / X^L (\omega,V_\mu)$. It follows that the irreducible representations
of \eqref{eq:4.22} with fixed $a \in [L / H_\lambda]$ and fixed $\mc O (T_\fs)$-character
$\chi$ are in bijection with $\Irr \big( \End_\C (V_\mu)^{X^L (\omega) / X^L (\omega,V_\mu)}
\big)$. Comparing with \eqref{eq:4.45}, we see that every irreducible representation of
$\C [X^L (\omega)/X^L (\omega,V_\mu),\kappa_\omega]$ appears in $V_\mu$. This is 
equivalent to each irreducible representation of $\End_\C (V_\mu) \rtimes 
X^L (\omega)/X^L (\omega,V_\mu)$ having nonzero vectors fixed by 
$X^L (\omega)/X^L (\omega,V_\mu)$. Thus Lemma \ref{lem:B.7} can be applied to
$X^L (\omega)/X^L (\omega,V_\mu)$ acting on $\mc O (T_\fs) \otimes \End_\C (V_\mu)$,
and it shows that the irreducible representations on the right hand side of 
\eqref{eq:4.22} are in bijection with 
\[
(T_\fs \q X^L (\fs) /
X^L (\omega,V_\mu) )_{\kappa_\omega} \times \Irr (X^L (\omega,V_\mu)).
\]
The second bijection follows by dividing out the free action of $X_\nr (L^\sharp
Z(G) / L^\sharp)$, as in the proof of Lemma \ref{lem:4.7}.a.
\end{proof}

As a result of the work in Section \ref{sec:geomEquiv}, twisted extended quotients
can also be used to describe the spaces of irreducible representations of
$G^\sharp Z(G)$ and $G^\sharp$. Let us extend $\kappa_\omega$ to a two-cocycle of 
$\Stab (\fs)$, trivial on the normal subgroup $W_\fs \times X^L (\omega,V_\mu)$, by
\begin{equation}\label{eq:4.30}
J(\gamma,\omega) J(\gamma',\omega) = \kappa_\omega (\gamma,\gamma') 
J(\gamma \gamma', \omega) \qquad \gamma, \gamma' \in X^G (\fs) . 
\end{equation}
\begin{thm}\label{thm:4.9}
\enuma{
\item Lemmas \ref{lem:B.6} and \ref{lem:B.7} gives rise to bijections
\begin{align*}
& \big( T_\fs \q \Stab (\fs) / X^L (\omega,V_\mu) \big)_{\kappa_\omega} 
\to \Irr \Big( \big( \mc O (T_\fs) \otimes \End_\C (V_\mu) \big)^{X^L (\fs)} 
\rtimes W_\fs^\sharp \Big) ,\\
& \big( T_\fs \q \Stab (\fs) X_\nr (L / L^\sharp) / X^L (\omega,V_\mu) 
\big)_{\kappa_\omega} \to \Irr \Big( \big( \mc O (T^\sharp_\fs) \otimes 
\End_\C (V_\mu) \big)^{X^L (\fs)} \rtimes W_\fs^\sharp \Big) .
\end{align*}
\item The stratified equivalences from \ref{thm:4.5} provide bijections
\begin{align*}
&  ( T_\fs \q \Stab (\fs) )_{\kappa_\omega} \to \big( T_\fs \q \Stab (\fs) / 
X^L (\omega,V_\mu) \big)_{\kappa_\omega} \times \Irr (X^L (\omega,V_\mu)) \to
\Irr^\fs (G^\sharp Z(G)) , \\
& ( T_\fs \q \Stab (\fs) X_\nr (L / L^\sharp) )_{\kappa_\omega} \to \big( T_\fs \q S 
\big)_{\kappa_\omega} \times \Irr (X^L (\omega,V_\mu)) \to \Irr^\fs (G^\sharp) ,
\end{align*}
where $S = \Stab (\fs) X_\nr (L / L^\sharp) / X^L (\omega,V_\mu)$. 
\item In part (b) $\Irr^\fs_\temp (G^\sharp Z(G))$ (respectively 
$\Irr^\fs_\temp (G^\sharp)$) corresponds to the same extended quotient,
only with $T_{\fs,\uni}$ instead of $T_\fs$.
} 
\end{thm}
\begin{proof}
In each of the three parts the second claim follows from the first upon
dividing out the action of $X_\nr (L^\sharp Z(G) / L^\sharp)$, like in
Lemma \ref{lem:4.7}.a\\
(a) In the proof of Lemma \ref{lem:4.8} we exhibited a bijection
\[
\big( T_\fs \q \Stab (\fs) / X^L (\omega,V_\mu) \big)_{\kappa_\omega}
\longleftrightarrow \Irr \Big( \big( \mc O (T_\fs) \otimes \End_\C (V_\mu) 
\big)^{X^L (\fs)} \Big) .
\]
With Lemma \ref{lem:B.7} we deduce a Morita equivalence 
\begin{equation}\label{eq:4.24}
\big( \mc O (T_\fs) \otimes \End_\C (V_\mu) \big)^{X^L (\fs)} \sim_M
\big( \mc O (T_\fs) \otimes \End_\C (V_\mu) \big) 
\rtimes (X^L (\fs) / X^L (\omega,V_\mu)) .
\end{equation}
In the notation of \eqref{eq:4.23} this means that
$p := p_{X^L (\fs) / X^L (\omega,V_\mu)}$ is a full idempotent in the right hand 
side of \eqref{eq:4.24}, that is, the two-sided ideal it generates is the 
entire algebra. Then $p$ is also full in
\begin{equation}\label{eq:4.25}
\big( \mc O (T_\fs) \otimes \End_\C (V_\mu) \big) 
\rtimes (\Stab (\fs) / X^L (\omega,V_\mu)) , 
\end{equation}
which implies that \eqref{eq:4.25} is Morita equivalent with
\begin{multline*}
p \Big( \big( \mc O (T_\fs) \otimes \End_\C (V_\mu) \big) 
\rtimes (\Stab (\fs) / X^L (\omega,V_\mu)) \Big) p \cong \\
\big( \mc O (T_\fs) \otimes \End_\C (V_\mu) \big)^{X^L (\fs) / X^L (\omega,V_\mu)}
\rtimes (\Stab (\fs) / X^L (\fs)) .
\end{multline*}
As a direct consequence of \eqref{eq:1.6}, \eqref{eq:1.11} and \eqref{eq:1.12},
\[
\Stab (\fs) / X^L (\fs) \cong W_\fs^\sharp . 
\]
In this way we reach the algebra featuring in part (a). By the above Morita
equivalence, its irreducible representations are in bijection with those of
\eqref{eq:4.25}. Apply Lemma \ref{lem:B.6}.a to the latter algebra.\\
(b) All the morphisms in \eqref{eq:3.4} are spectrum preserving with respect
to filtrations. In combination with the other remarks in the proof of
Theorem \ref{thm:4.5}.a this gives a bijection
\begin{equation}\label{eq:4.26}
\Irr^\fs (G^\sharp Z(G)) \to \Irr \Big( \big( \mc O (T_\fs) \otimes \End_\C (V_\mu) 
\big)^{X^L (\fs)} \rtimes W_\fs^\sharp \Big) \times [L / H_\lambda] .
\end{equation}
By part (a) and \eqref{eq:4.21} the right hand side of \eqref{eq:4.26} 
is in bijection with
\begin{equation}\label{eq:4.36}
\big( T_\fs \q \Stab (\fs) / X^L (\omega,V_\mu) \big)_{\kappa_\omega} 
\times \Irr (X^L (\omega,V_\mu)) .
\end{equation}
The group $X^L (\fs)$ acts on $\C [L]$ by pointwise multiplication of functions
on $L$. That gives rise to actions on $\C [L / H_\lambda]$ and on
\[
\End_\C (\C [L / H_\lambda]) \cong \C [L / H_\lambda] \otimes \C [L / H_\lambda]^* .
\]
Regarding $\C [L / H_\lambda]$ as the algebra $\bigoplus_{a \in [H / H_\lambda]} \C$, 
\eqref{eq:4.21} leads to an isomorphism
\begin{equation}\label{eq:4.28}
\begin{split}
\big( \mc O (T_\fs) \otimes \End_\C (V_\mu) 
\big)^{X^L (\fs)} \rtimes W_\fs^\sharp \otimes \C [L / H_\lambda] \cong \\ 
\big( \mc O (T_\fs) \otimes \End_\C (V_\mu \otimes \C[L / H_\lambda]) 
\big)^{X^L (\fs)} \rtimes W_\fs^\sharp .
\end{split}
\end{equation}
We note that \eqref{eq:4.36} is also the space of irreducible representations of 
\eqref{eq:4.28}. In the proof of Lemma \ref{lem:4.8} we encountered a bijection
\[
(T_\fs \q X^L (\fs) )_{\kappa_\omega} \longleftrightarrow 
\Irr \Big( \big( \mc O (T_\fs) \otimes \End_\C (V_\mu \otimes \C[L / H_\lambda]) 
\big)^{X^L (\fs)} \Big) .
\]
It implies a Morita equivalence 
\[
\big( \mc O (T_\fs) \otimes \End_\C (V_\mu \otimes \C[L / H_\lambda]) 
\big)^{X^L (\fs)} \sim_M \big( \mc O (T_\fs) \otimes 
\End_\C (V_\mu \otimes \C[L / H_\lambda]) \big) \rtimes X^L (\fs) .
\]
Just as in the proof of part (a), this extends to 
\begin{equation}\label{eq:4.27}
\begin{split}
\big( \mc O (T_\fs) \otimes \End_\C (V_\mu \otimes \C[L / H_\lambda]) 
\big)^{X^L (\fs)} \rtimes W_\fs^\sharp \sim_M \\
\big( \mc O (T_\fs) \otimes 
\End_\C (V_\mu \otimes \C[L / H_\lambda]) \big) \rtimes \Stab (\fs) .
\end{split}
\end{equation}
Finally we apply Lemma \ref{lem:B.6}.a to the right hand side and we 
combine it with \eqref{eq:4.27}, \eqref{eq:4.28} and \eqref{eq:4.26}.\\
(c) The first bijection in part (b) obviously preserves the subspaces associated
to $T_{\fs,\uni}$. We need to show that the second bijection sends them
to $\Irr^\fs_\temp (G^\sharp Z(G))$. This is a property of the
geometric equivalences in Theorem \ref{thm:4.5}, as we will now check.

We may and will assume that $\omega$ is unitary, or equivalently that
it is tempered. The Morita equivalence between 
$\cH (G^\sharp Z(G))^\fs$ and \eqref{eq:3.5} is induced by an idempotent
$e^\sharp_{\lambda_{G^\sharp Z(G)}} \in \cH (G^\sharp Z(G))$, see Theorem 
\ref{thm:1.1}. Its construction (which starts around \eqref{eq:1.7})
shows that eventually it comes from a central idempotent in the algebra
of a profinite group, so it is a self-adjoint element. Hence, by 
\cite[Theorem A]{BHK} this Morita equivalence preserves temperedness. The notion 
of temperedness in \cite{BHK} agrees with temperedness for representations of 
affine Hecke algebras (see \cite{Opd}) because both are based on
the Hilbert algebra structure and the canonical tracial states on these algebras. 

The sequence of algebras \eqref{eq:3.4} is derived from its counterpart for\\
$\cH (T_\fs,W_\fs,q_\fs) \otimes \End_\C (V_\mu)$. By Theorem \ref{thm:4.6} 
that one matches tempered representations with $T_{\fs,\uni} \q W_\fs$.
By Clifford theory any irreducible representation $\pi$ of 
\begin{equation}\label{eq:4.29}
\big( \cH (T_\fs,W_\fs,q_\fs) \otimes \End_\C (V_\mu) \big)^{X^L (\fs)}
\rtimes \mf R_\fs^\sharp
\end{equation}
is contained in a sum of irreducible representations $\tilde \pi$ of 
$\cH (T_\fs,W_\fs,q_\fs) \otimes \End_\C (V_\mu)$, which are all in the same
$\Stab (\fs)$-orbit. Temperedness of $\pi$ depends only on the action of 
the subalgebra $\mc O(T_\fs) \cong \C [X^* (T_\fs)]$, and in fact can already
be detected on $\C[X]$ for any finite index sublattice $X \subset X^* (T_\fs)$.
The analogous statement for \eqref{eq:4.29} holds as well, with
$X = X^* (T_\fs / X^L (\fs))$, and it is stable under the action of $\Stab (\fs)$. 
Consequently $\pi$ is tempered if and only if $\tilde \pi$ is tempered. 

These observations imply that the sequence of algebra homomorphisms 
\eqref{eq:3.4} preserves temperedness of irreducible representations, and
that it maps such representations of \eqref{eq:4.29} to irreducible 
representations of \eqref{eq:3.6} with $\mc O (T_\fs / X^L (\fs))$-weights
in $T_{\fs,\uni} / X^L (\fs)$. 

Now we invoke this property for every $a \in L / H_\lambda \cong
\Irr (X^L (\omega,V_\mu))$ and we deduce that the second map in part (b)
has the required property with respect to temperedness.
\end{proof}

We will work out what Theorem \ref{thm:4.9} says for a single Bernstein component of 
$G^\sharp$. To this end, we first analyse what parabolic induction from $L^\sharp$ 
to $G^\sharp$ looks like in the setting of Theorem \ref{thm:1.2}.

\begin{thm}\label{thm:1.4}
\enuma{
\item There exist idempotents $e^\fs_L \in \cH (L), e^\fs_{L^\sharp} \in \cH (L^\sharp)$
such that $\cH (L^\sharp )^\fs$ is Morita equivalent with
\begin{multline*}
e^\fs_{L^\sharp} \cH (L^\sharp) e^\fs_{L^\sharp} \cong e^\fs_L \cH (L)^{X^L (\fs) 
X_\nr (L / L^\sharp)} e^\fs_L \cong
\Big( \mc O (T_\fs^\sharp) \otimes \End_\C (V_\mu \otimes \C [L / H_\lambda]) 
\Big)^{X^L (\fs)} \\
\cong \bigoplus\nolimits_{a \in [L / H_\lambda]} \Big( \mc O (T_\fs^\sharp) \otimes 
\End_\C (V_\mu) \Big)^{X^L (\fs) / X^L (\omega,V_\mu)} . 
\end{multline*}
\item Under the equivalences from part (a) and Theorem \ref{thm:1.2}, the normalized
parabolic induction functor
\[
I_{P^\sharp}^{G^\sharp} : \Rep^{\fs_L} (L^\sharp) \to \Rep^{\fs} (G^\sharp) 
\]
corresponds to induction from the last algebra in part (a) to 
\[
\bigoplus\nolimits_{a \in [L / H_\lambda]} \Big( \cH (T_\fs^\sharp,W_\fs,q_\fs) 
\otimes \End_\C (V_\mu) \Big)^{X^L (\fs) / X^L (\omega,V_\mu)} 
\rtimes \mf R_\fs^\sharp .
\]
}  
\end{thm}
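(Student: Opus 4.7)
The overall strategy is to specialize Theorems \ref{thm:1.1}, \ref{thm:1.2}, and \ref{thm:1.3} to the case where the ambient reductive group is $L$ itself. Since $L$ is its own Levi subgroup, $W(L,L) = 1$, so the $L$-analogues of $W_\fs$ and $\mf R_\fs^\sharp$ are trivial and the affine Hecke algebras $\cH(T_\fs, W_\fs, q_\fs)$ and $\cH(T_\fs^\sharp, W_\fs, q_\fs)$ collapse to their commutative parts $\mc O(T_\fs)$ and $\mc O(T_\fs^\sharp)$ respectively.

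For part (a), I would define $e^\fs_L := e^\sharp_{\lambda_L}$ and $e^\fs_{L^\sharp} := e^\sharp_{\lambda_{L^\sharp}}$ by applying the construction \eqref{eq:1.25} to $L$ in place of $G$. The Morita equivalence $\cH(L^\sharp)^\fs \sim e^\fs_{L^\sharp} \cH(L^\sharp) e^\fs_{L^\sharp}$ then follows by running the argument behind Theorem \ref{thm:1.2}(a) with the pair $(L, L^\sharp)$. The second isomorphism, relating $\cH(L^\sharp)$ to an invariant subalgebra of $\cH(L)$, is the standard Clifford-theoretic translation between restriction from $L$ to $L^\sharp$ and invariance under characters of $L/L^\sharp$; only characters in $X^L(\fs) X_\nr(L/L^\sharp)$ act nontrivially on the block $e^\fs_L \cH(L) e^\fs_L$, all others annihilating it. For the third isomorphism, apply the $L$-specialization of \eqref{eq:1.10} to obtain
\[
e^\fs_L \cH(L) e^\fs_L \;\cong\; \mc O(T_\fs) \otimes \End_\C(V_\mu \otimes \C[L/H_\lambda]),
\]
and then use Theorem \ref{thm:1.3}(c): the subgroup $X_\nr(L/L^\sharp)$ acts by translations on $T_\fs$ and trivially on the endomorphism factor, so its invariants produce $\mc O(T_\fs^\sharp)$, after which the residual $X^L(\fs)$-invariants yield the desired expression.

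The fourth isomorphism is the delicate combinatorial step. By Theorem \ref{thm:1.3}(b), the subgroup $X^L(\omega, V_\mu) \subset X^L(\fs)$ acts trivially on $\mc O(T_\fs^\sharp) \otimes \End(V_\mu)$ and has order $[L : H_\lambda]$. Viewing $\C[L/H_\lambda]$ as the regular representation of the finite abelian group $X^L(\omega, V_\mu)$, Fourier decomposition splits the factor $\End_\C(\C[L/H_\lambda]) \cong M_{[L:H_\lambda]}(\C)$ into $[L : H_\lambda]$ character-isotypic summands indexed by $[L/H_\lambda]$; the quotient $X^L(\fs)/X^L(\omega, V_\mu)$ then acts on each summand only through its action on $\End(V_\mu)$, producing the claimed direct sum.

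For part (b), the compatibility with normalized parabolic induction $I_{P^\sharp}^{G^\sharp}$ follows from the compatibility of the idempotent constructions with the inclusion $L \subset G$ provided by the Bushnell--Kutzko theory of covers \cite{SeSt6}: the cover $(K_G, \lambda_G)$ of $(K_L, \lambda_L)$ ensures that $I_{P^\sharp}^{G^\sharp}$ corresponds, under the Morita equivalences of (a) and Theorem \ref{thm:1.2}, to induction from the "commutative" algebra of part (a) to its affine-Hecke-algebra counterpart, with $\mc O(T_\fs^\sharp)$ replaced by $\cH(T_\fs^\sharp, W_\fs, q_\fs)$. The main obstacle throughout is the fourth isomorphism: explicitly tracking the $X^L(\fs)$-action on $V_\mu \otimes \C[L/H_\lambda]$ and verifying that the Fourier decomposition intertwines properly with the operators $I(\gamma, \omega)$ from \cite[\S 3]{ABPS5}; once this is in place, everything else is essentially bookkeeping from the specialization of the $G^\sharp$-results to $L^\sharp$.
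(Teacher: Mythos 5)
Your part (a) is essentially the paper's argument: the paper disposes of (a) by citing \cite[(169)]{ABPS5} and \cite[Lemma 4.8]{ABPS5} (the latter being exactly the statement $\End_\C(\C[L/H_\lambda])^{X^L(\omega,V_\mu)}\cong \C[L/H_\lambda]$ that underlies your Fourier decomposition of the regular representation of $X^L(\omega,V_\mu)$), and your specialization of \eqref{eq:1.10} and Theorem \ref{thm:1.3} to the ambient group $L$ is the right way to see the intermediate isomorphisms. One small imprecision: characters outside $X^L(\fs)X_\nr(L/L^\sharp)$ do not annihilate the block $e^\fs_L\cH(L)e^\fs_L$ (the action $\alpha_\gamma$ is multiplication by a character of $L$, hence invertible); they map it isomorphically onto a different Bernstein block, which is why taking invariants under all of $\Irr(L/L^\sharp)$ and then cutting by $e^\fs_{L^\sharp}$ reduces to invariants under the stabilizer of the block.

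In part (b) there is a genuine gap. The Bushnell--Kutzko cover $(K_G,\lambda_G)$ of $(K_L,\lambda_L)$ and \cite[Corollary 8.4]{BuKu3} give the compatibility of $I_P^G$ with induction of modules only for the pair $(L,G)$, i.e.\ for the algebras $e_{\lambda_L}\cH(L)e_{\lambda_L}\to e_{\lambda_G}\cH(G)e_{\lambda_G}$ (and, after tensoring with matrix algebras, for $e^\fs_L$ and $e^\sharp_{\lambda_G}$). There is no cover for the pair $(L^\sharp,G^\sharp)$ in sight, so you cannot directly invoke this machinery for $I_{P^\sharp}^{G^\sharp}$. The paper bridges this by three further steps that your sketch omits: (i) since $G^\sharp/P^\sharp\cong G/P$, the functors $I_P^G$ and $I_{P^\sharp}^{G^\sharp}$ agree on $\Rep^{\fs_L}(L)$ viewed as $L^\sharp$-representations; (ii) under the Morita equivalences, $\Res^L_{L^\sharp}$ and $\Res^G_{G^\sharp}$ correspond to restriction to the subalgebras $e^\fs_{L^\sharp}\cH(L^\sharp)e^\fs_{L^\sharp}$ and $e^\sharp_{\lambda_{G^\sharp}}\cH(G^\sharp)e^\sharp_{\lambda_{G^\sharp}}$, so the $G$-level statement descends to the image of $\Res^L_{L^\sharp}$; and (iii) every irreducible object of $\Rep^{\fs_L}(L^\sharp)$ is a direct summand of the restriction of an $L$-representation, so the statement extends to all of $\Rep^{\fs_L}(L^\sharp)$. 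Without (i)--(iii) the assertion for $I_{P^\sharp}^{G^\sharp}$ does not follow from the cover for $(L,G)$. (A further technical point the paper records, which you should at least acknowledge, is the identification $\Hom_A(B,M)\cong B\otimes_A M$ via the canonical anti-involution, needed to match the two conventions for induction.)
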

\begin{proof}
Part (a) is a consequence of \cite[(169)]{ABPS4} and \cite[Lemma 4.8]{ABPS4}, which
shows that 
\[
\End_\C (\C [L /H_\lambda]) \cong \C [L /H_\lambda] \otimes \C [X^L (\omega,V_\mu)] .
\]
The analogue of part (b) for $L$ and $G$ says that 
\[
I_P^G : \Rep^{\fs_L} (L) \to \Rep^{\fs} (G) 
\]
corresponds to induction from
\begin{align*}
& e^\fs_L \cH (L) e^\fs_L \cong \mc O (T_\fs) \otimes \End_\C (V_\mu \otimes 
\C [L / H_\lambda]) \quad \text{to} \\
& e^\sharp_{\lambda_G} \cH (G) e^\sharp_{\lambda_G} \cong
\cH (T_\fs,W_\fs,q_\fs) \otimes \End_\C (V_\mu \otimes 
\C [L / H_\lambda] \otimes \C \mf R_\fs^\sharp) . 
\end{align*}
To see that it is true, we reduce with \cite[Theorem 4.5]{ABPS4} to the algebras
\begin{align*}
& e_{\lambda_L} \cH (L) e_{\lambda_L} \cong \mc O (T_\fs) \otimes \End_\C (V_\lambda) , \\
& e_{\lambda_G} \cH (G) e_{\lambda_G} \cong
\cH (T_\fs,W_\fs,q_\fs) \otimes \End_\C (V_\lambda) . 
\end{align*}
Then we are in the situation where $(K_G,\lambda_G)$ is a cover of a type 
$(K_L,\lambda_L)$, and the statement about the induction functors follows from
\cite[(126)]{ABPS4} and \cite[Corollary 8.4]{BuKu3}. 

We note that here, for a given algebra homomorphism $\phi : A \to B$, we must 
use induction in the version $\Ind_A^B (M) = \Hom_A (B,M)$. However, in all the
cases we encounter $B$ is free of finite rank as a module over $A$ and it is
endowed with a canonical anti-involution 
\[
f \mapsto [f^\vee : g \mapsto \overline{f (g^{-1})}] .
\] 
Hence we may identify $\Hom_A (B,M) \cong B^* \otimes_A M \cong B \otimes_A M$.
This shows the desired claim for $I_P^G$.

Since $G^\sharp / P^\sharp \cong G / P ,\; I_P^G = I_{P^\sharp}^{G^\sharp}$ on 
$\Rep^{\fs_L} (L)$. In particular
\begin{equation}\label{eq:6.4}
\Res_{G^\sharp}^G \circ I_P^G = I_{P^\sharp}^{G^\sharp} \circ \Res_{L^\sharp}^L
\quad \text{as functors} \quad \Rep^{\fs_L} (L) \to \Rep^\fs (G^\sharp).
\end{equation}
The functor $\Res^L_{L^\sharp}$ corresponds to $\Res^{e^\sharp_L \cH (L) 
e^\sharp_L}_{e^\fs_{L^\sharp} \cH (L^\sharp) e^\fs_{L^\sharp}}$, and $\Res^G_{G^\sharp}$ 
to restriction from $e^\sharp_{\lambda_G} \cH (G) e^\sharp_{\lambda_G}$ to
$e^\sharp_{\lambda_{G^\sharp}} \cH (G^\sharp) e^\sharp_{\lambda_{G^\sharp}}$,
which is the algebra appearing in the statement of part (b).

The set $\mc H (W_\fs,q_\fs) \otimes \C [\mf R_\fs^\sharp]$ forms a basis for
$e^\sharp_{\lambda_G} \cH (G) e^\sharp_{\lambda_G}$ as a module over 
$e^\sharp_L \cH (L) e^\sharp_L$ and for $e^\sharp_{\lambda_{G^\sharp}} 
\cH (G^\sharp) e^\sharp_{\lambda_{G^\sharp}}$ as a module over 
$e^\fs_{L^\sharp} \cH (L^\sharp) e^\fs_{L^\sharp}$. It follows that
\begin{equation}\label{eq:6.5}
\Res^{e^\sharp_{\lambda_G} \cH (G) e^\sharp_{\lambda_G}}_{e^\sharp_{\lambda_{G^\sharp}} 
\cH (G^\sharp) e^\sharp_{\lambda_{G^\sharp}}} \circ \mr{ind}_{e^\sharp_L \cH (L) 
e^\sharp_L}^{e^\sharp_{\lambda_G} \cH (G) e^\sharp_{\lambda_G}} =
\mr{ind}_{e^\fs_{L^\sharp} \cH (L^\sharp) e^\fs_{L^\sharp}}^{e^\sharp_{\lambda_{G^\sharp}} 
\cH (G^\sharp) e^\sharp_{\lambda_{G^\sharp}}} \circ \Res^{e^\sharp_L \cH (L) 
e^\sharp_L}_{e^\fs_{L^\sharp} \cH (L^\sharp) e^\fs_{L^\sharp}} .
\end{equation}
Comparing \eqref{eq:6.4} and \eqref{eq:6.5}, we find that 
\begin{equation}\label{eq:6.6}
I_{P^\sharp}^{G^\sharp} \circ \Res_{L^\sharp}^L \quad \text{corresponds to} \quad
\mr{ind}_{e^\fs_{L^\sharp} \cH (L^\sharp) e^\fs_{L^\sharp}}^{e^\sharp_{\lambda_{G^\sharp}} 
\cH (G^\sharp) e^\sharp_{\lambda_{G^\sharp}}} \circ \Res^{e^\sharp_L \cH (L) 
e^\sharp_L}_{e^\fs_{L^\sharp} \cH (L^\sharp) e^\fs_{L^\sharp}}
\end{equation}
under the Morita equivalences from Theorems \ref{thm:1.2} and \ref{thm:1.4}.a.
Here both inductions can be constructed entirely in $\mc H (G^\sharp)$. The two
sides of \eqref{eq:6.6} are then related by applications of the idempotents 
$e^\fs_{L^\sharp}$ and $e^\sharp_{\lambda_{G^\sharp}}$ (which are supported on $G^\sharp$).
Hence the correspondence \eqref{eq:6.6} preserves $L^\sharp$-subrepresentations.
Since every irreducible $L^\sharp$-representation appears as a summand of an 
$L$-representation, this implies the analogue of \eqref{eq:6.6} on the whole of 
$\Rep^{\fs_L}(L^\sharp)$.
\end{proof}

Let $\mf t^\sharp = [L^\sharp, \sigma^\sharp]_{G^\sharp}$ be an inertial equivalence 
class for $G^\sharp$, with $\mf t^\sharp \prec \fs = [L,\omega]_G$. We abbreviate 
$\phi_{\omega,X^L (\fs)} = \{ \phi_{\omega,\gamma} : \gamma \in X^L (\fs) \}$,
where $\phi_{\omega ,\gamma}$ is as in Lemma \ref{lem:4.1}.
By \eqref{eq:4.4} there is a unique $X^L (\fs)$-orbit
\begin{equation}\label{eq:4.31}
\phi_{\omega,X^L (\fs)} \, \rho \; \subset \; \Irr (\C [X^L (\omega),\kappa_\omega]) 
\end{equation}
such that $T_{\mf t^\sharp} = (T_\fs^\sharp \times \phi_{\omega,X^L (\fs)} \rho )
/ X^L (\fs)$. Then $\phi_{\omega,X^L (\fs)} \rho$ determines a unique summand $\C a$
of $\C [L / H_\lambda] \cong \bigoplus_{[L / H_\lambda]} \C$, namely the irreducible 
representation of $X^L (\omega,V_\mu)$ obtained by restricting $\rho$. Let 
$V_{\sigma^\sharp}$ be the intersection of $V_\mu$ with the subspace of $V_\omega$
on which $\sigma^\sharp$ is defined, and let $\mf R_{\mf t^\sharp}$ be its stabilizer in 
$\mf R_\fs^\sharp$. Then $\mf R_{\mf t^\sharp}$ is also the stabilizer of $\mf t^\sharp$
in $\mf R_\fs^\sharp$ and
\begin{equation}\label{eq:4.39}
W_{\mf t^\sharp} = W_\fs \rtimes \mf R_{\mf t^\sharp} ,
\end{equation}
by \cite[Lemma 2.3]{ABPS4}.
Via the formula \eqref{eq:4.30} the operators $J(\gamma,\omega) 
\big|_{V_{\sigma^\sharp}}$ determine a 2-cocycle $\kappa'_\omega$ of the group
\begin{equation}\label{eq:4.34}
W' = \{ (w,\gamma) \in \Stab (\fs) : w \in W_{\mf t^\sharp} \}.
\end{equation}
Since \eqref{eq:4.30} is 1 on $W_\fs$, so is $\kappa'_\omega$.
By \eqref{eq:1.13} $W' / X^L (\fs) \cong W_{\mf t^\sharp}$. As $V_{\sigma^\sharp}$
is associated to the single $X^L (\fs)$-orbit \eqref{eq:4.31}, 
$\kappa'_\omega ((w,\gamma),(w',\gamma'))$ depends only on $(w,w')$. Thus it
determines a 2-cocycle $\kappa_{\sigma^\sharp}$ of $W_{\mf t^\sharp}$, which 
factors through $\mf R_{\mf t^\sharp} \cong W_{\mf t^\sharp} / W_\fs$.

\begin{lem}\label{lem:4.10}
\enuma{
\item The bijections in Theorem \ref{thm:4.9} restrict to 
\begin{align*}
& \Irr^{\mf t^\sharp} (G^\sharp) \longleftrightarrow 
(T_{\mf t^\sharp} \q W_{\mf t^\sharp} )_{\kappa_{\sigma^\sharp}} , \\
& \Irr_\temp^{\mf t^\sharp} (G^\sharp) \longleftrightarrow 
(T_{\mf t^\sharp,\uni} \q W_{\mf t^\sharp} )_{\kappa_{\sigma^\sharp}} ,
\end{align*}
where $T_{\mf t^\sharp,\uni}$ denotes the space of unitary representations
in $T_{\mf t^\sharp}$.
\item Suppose $\pi \in \Irr_\temp^{\mf t^\sharp} (G^\sharp)$ corresponds to
$[t,\rho]$ and has cuspidal support\\ 
$W_{\mf t^\sharp} (\chi \otimes \sigma^\sharp) \in T_{\mf t^\sharp} / 
W_{\mf t^\sharp}$. Then $W_{\mf t^\sharp} t$ is the unitary 
part of $\chi \otimes \sigma^\sharp$, with respect to the polar decomposition 
\[
T_{\mf t^\sharp} = T_{\mf t^\sharp,\uni} \times \Hom_\Z (X^* (T_{\mf t^\sharp}),
\R_{>0} ).
\]
}
\end{lem}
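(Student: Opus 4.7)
The plan is to deduce both parts of the lemma by restricting the bijections of Theorem \ref{thm:4.9}(b)--(c) to the preimage of a single Bernstein component $\Irr^{\mf t^\sharp}(G^\sharp)$ inside $\Irr^\fs(G^\sharp)$, and then identifying that preimage as a twisted extended quotient in its own right.

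First I would isolate the component $\mf t^\sharp$ in the extended quotient. The factorization \eqref{eq:procat} together with \eqref{eq:4.4} and Proposition \ref{prop:4.2} tells us that the cuspidal support map sends $\Irr^{\mf t^\sharp}(G^\sharp)$ to $T_{\mf t^\sharp}/W_{\mf t^\sharp}$, where $T_{\mf t^\sharp}$ is obtained from $T_\fs^\sharp$ by dividing out $\{\chi_\gamma : \gamma\in X^L(\fs,\sigma^\sharp)\}$. Tracing through the proof of Theorem \ref{thm:4.9}(b), which factors through Lemma \ref{lem:4.8}, a point $[t,\rho]\in (T_\fs\q \Stab(\fs) X_\nr(L/L^\sharp))_{\kappa_\omega}$ maps into $\Irr^{\mf t^\sharp}(G^\sharp)$ precisely when the component of its image in $\Irr (\C[X^L(\omega),\kappa_\omega])$ belongs to the single $X^L(\fs)$-orbit $\phi_{\omega,X^L(\fs)}\rho_{\sigma^\sharp}$ attached to $\sigma^\sharp$, and when the $T_\fs^\sharp$-coordinate lies in the corresponding $X^L(\fs,\sigma^\sharp)$-orbit. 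Fixing a representative $\rho$ in that orbit (which, after dividing out by the free action of $X_\nr(L/L^\sharp)$ and the trivially-acting subgroup $X^L(\omega,V_\mu)$, amounts to restricting to the summand $V_{\sigma^\sharp}\subset \C a\otimes V_\mu$), the relevant subgroup of $\Stab(\fs) X_\nr(L/L^\sharp)$ is precisely the preimage $W'X_\nr(L/L^\sharp)$ of $W_{\mf t^\sharp}$ defined in \eqref{eq:4.34}.

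Next I would identify the 2-cocycle. By construction the cocycle $\kappa_\omega$ of $\Stab(\fs)$ from \eqref{eq:4.30} restricts to a cocycle on $W'$; since $V_{\sigma^\sharp}$ corresponds to a single $X^L(\fs)$-orbit, the value $\kappa_\omega((w,\gamma),(w',\gamma'))$ depends only on the images $w,w'\in W_{\mf t^\sharp}$, so it descends to the cocycle $\kappa_{\sigma^\sharp}$ on $W_{\mf t^\sharp}$ of \eqref{eq:4.34} (which, being trivial on $W_\fs$ by \eqref{eq:4.30}, factors through $\mf R_{\mf t^\sharp}$). Combining Steps~1 and~2 with \eqref{eq:2.16} and the decomposition \eqref{eq:4.39} of $W_{\mf t^\sharp}$, the restriction of the bijection in Theorem \ref{thm:4.9}(b) yields
\[
\Irr^{\mf t^\sharp}(G^\sharp)\;\longleftrightarrow\;(T_{\mf t^\sharp}\q W_{\mf t^\sharp})_{\kappa_{\sigma^\sharp}}.
\]
The tempered version is immediate from Theorem \ref{thm:4.9}(c): the unitary part $T_{\mf t^\sharp,\uni}$ is the image of $T_{\fs,\uni}$, and temperedness is preserved by the bijection. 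This proves part~(a).

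For part (b), I would trace the cuspidal support through the chain of identifications. For $G$ itself, Theorem \ref{thm:4.6}(3) says that if $\Pi\in\Irr_\temp^\fs(G)$ corresponds to $[t',\rho']\in(T_{\fs,\uni}\q W_\fs)_2$, then $W_\fs t'$ is the unitary part of the cuspidal support of $\Pi$. The Morita equivalences and spectrum-preserving maps of Section \ref{sec:geomEquiv} that produce Theorem \ref{thm:4.9} all fix the $T_\fs$-coordinate (they preserve the action of $\cO(T_\fs)^{\Stab(\fs)}$), so the $T_{\mf t^\sharp}$-coordinate of the image of $\pi\in\Irr^{\mf t^\sharp}_\temp(G^\sharp)$ is the reduction to $T_{\mf t^\sharp}$ of the unitary part of the cuspidal support of any $\Pi\in\Irr^\fs_\temp(G)$ containing $\pi$ as a subquotient of $\Res^G_{G^\sharp}\Pi$. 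Combined with Proposition \ref{prop:4.2} and \cite[Proposition 2.1]{ABPS5} — which tell us that restriction of cuspidal supports from $L$ to $L^\sharp$ commutes with the polar decomposition coordinate-wise — this identifies $W_{\mf t^\sharp}t$ with the unitary part of $W_{\mf t^\sharp}(\chi\otimes\sigma^\sharp)$, as claimed.

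The step I expect to be the main obstacle is verifying that the cocycle coming from restricting $\kappa_\omega$ to $W'$ really descends to exactly $\kappa_{\sigma^\sharp}$ on $W_{\mf t^\sharp}$, and not some twist of it; this requires carefully tracking how the intertwiners $J(\gamma,\omega)$ act on $V_{\sigma^\sharp}$ under the Morita equivalence \eqref{eq:4.27} — i.e.\ that the same operators defining $\kappa_{\sigma^\sharp}$ via \eqref{eq:4.30} are the ones detecting the stabilizer structure in the extended quotient of Theorem \ref{thm:4.9}. Once this compatibility is nailed down, the rest is bookkeeping.
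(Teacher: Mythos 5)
Your proposal is correct and follows essentially the same route as the paper: restrict the bijection of Theorem \ref{thm:4.9}.b to the piece of the extended quotient cut out by the $X^L(\fs)$-orbit attached to $\sigma^\sharp$, observe that $\kappa_\omega$ descends to $\kappa_{\sigma^\sharp}$ on $W_{\mf t^\sharp}$ via \eqref{eq:4.34}, and propagate Theorem \ref{thm:4.6}(3) through the geometric equivalences by Clifford theory for part (b). The only point where the paper is more explicit than you are is in justifying \emph{why} the block $\Irr^{\mf t^\sharp}(G^\sharp)$ corresponds to that orbit: it invokes Theorem \ref{thm:1.4}.b, i.e.\ the compatibility of the Morita equivalences with $I_{P^\sharp}^{G^\sharp}$, since $\Irr^{\mf t^\sharp}(G^\sharp)$ is by definition the set of constituents of $I_{P^\sharp}^{G^\sharp}(\chi\otimes\sigma^\sharp)$ — this is exactly the compatibility you flag at the end as the remaining obstacle, and it is already available in the paper's setup.
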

\begin{proof}
(a) Recall that $\Irr^{\mf t^\sharp}(G^\sharp)$ consists of those irreducible
representations that are contained in $I_{P^\sharp}^{G^\sharp}(\chi \otimes
\sigma^\sharp)$ for some $\chi \otimes \sigma^\sharp \in T_{\mf t^\sharp}$.
In Theorem \ref{thm:1.4}.b we translated $I_{P^\sharp}^{G^\sharp}$ to
induction between two algebras. The first one, Morita equivalent with
$\cH (L^\sharp)^{\fs_L}$, was
\[
\C [L / H_\lambda] \otimes \Big( \mc O (T_\fs^\sharp) \otimes \End_\C (V_\mu)
\Big)^{X^L (\fs)} .
\]
The second algebra, Morita equivalent with $\cH (G^\sharp)^\fs$, was
\[
\C [L / H_\lambda] \otimes \Big( \cH (T_\fs^\sharp, W_\fs,q_\fs) \otimes 
\End_\C (V_\mu) \Big)^{X^L (\fs)} \rtimes \mf R_\fs^\sharp .
\] 
As above, the $L^\sharp$-representation $\sigma^\sharp$ determines a summand
$\C a$ of $\C [L / H_\lambda]$ and a $X^L (\fs)$-stable subspace $V_{\sigma^\sharp}
\subset V_\mu$. Consequently $\mc H (L^\sharp )^{[L^\sharp, \sigma^\sharp]}$ is
Morita equivalent with the two algebras
\begin{equation}\label{eq:6.7}
\Big( \mc O (T_\fs^\sharp) \otimes \End_\C (V_{\sigma^\sharp}) \Big)^{X^L (\fs)} 
\quad \text{and} \quad \Big( \mc O (T_\fs^\sharp) \otimes 
\End_\C (\C \mf R_\fs^\sharp \cdot V_{\sigma^\sharp}) \Big)^{X^L (\fs)} .
\end{equation}
In these terms Theorem \ref{thm:1.4} shows that $\mc H (G^\sharp )^{\mf t^\sharp}$ is
is Morita equivalent with
\begin{equation}\label{eq:4.35}
\Big( \cH (T_\fs^\sharp, W_\fs,q_\fs) \otimes \End_\C (\C \mf R_\fs^\sharp 
\cdot V_{\sigma^\sharp}) \Big)^{X^L (\fs)} \rtimes \mf R_\fs^\sharp . \\
\end{equation}
Here the subspaces $w V_{\sigma^\sharp}$ with $w \in \mf R_\fs^\sharp$ are permuted
transitively by $\mf R_\fs^\sharp$, so upon taking $\mf R_\fs^\sharp$-invariants
only the $\mf R_{\mf t^\sharp}$ on $V_{\sigma^\sharp}$ survives. Recall that, for
any finite group $\Gamma$ and any $\Gamma$-algebra $A$:
\begin{equation}\label{eq:6.8}
A \rtimes \Gamma \cong \big( A \otimes \End_\C (\C \Gamma) \big)^\Gamma .
\end{equation}
Applying \eqref{eq:6.8} to \eqref{eq:4.35} first with $\Gamma = \mf R_\fs^\sharp$ and 
subsequently with $\Gamma = \mf R_{\mf t^\sharp}$ (in the opposite direction, taking the 
above transitivity into account), we find that \eqref{eq:4.35} is Morita equivalent with
\begin{equation}
\Big( \cH (T_\fs^\sharp, W_\fs,q_\fs) \otimes \End_\C (V_{\sigma^\sharp}) 
\Big)^{X^L (\fs)} \rtimes \mf R_{\mf t^\sharp} .
\end{equation}
The constructions in Section \ref{sec:geomEquiv} restrict to stratified
equivalences between \eqref{eq:4.35} and 
\begin{equation}\label{eq:4.32}
\begin{aligned}
& \big( \mc O (T_\fs^\sharp) \otimes \End_\C (\C \mf R_\fs^\sharp 
\cdot V_{\sigma^\sharp}) \big)^{X^L (\fs)} \rtimes W_\fs^\sharp , \\
& \big( \mc O (T_\fs^\sharp) \otimes \End_\C (V_{\sigma^\sharp}) 
\big)^{X^L (\fs)} \rtimes W_{\mf t^\sharp} . 
\end{aligned}
\end{equation}
By \eqref{eq:6.7}
\begin{equation}\label{eq:4.33}
\Irr \Big( \big( \mc O (T_\fs^\sharp) \otimes \End_\C (V_{\sigma^\sharp}) 
\big)^{X^L (\fs)} \Big) \cong T_{\mf t^\sharp} .
\end{equation}
As explained above with \eqref{eq:4.34}, the 2-cocycle $\kappa_\omega$ of 
$\Stab (\fs)$ reduces to the 2-cocycle $\kappa_{\sigma^\sharp}$ for the 
action of $W_{\mf t^\sharp}$ in \eqref{eq:4.32}. Now we apply Lemma 
\ref{lem:B.6}.a to \eqref{eq:4.32} and we find the first bijection. 
To obtain the second bijection, we use Theorem \ref{thm:4.9}.c.\\
(b) For the stratified equivalence between 
\[
\cH (T_\fs^\sharp, W_\fs,q_\fs) \otimes \End_\C (V_{\sigma^\sharp}) \text{ and } 
\mc O (T_\fs^\sharp) \otimes \End_\C (V_{\sigma^\sharp}) \rtimes W_\fs 
\]
the analogous claim about the cuspidal support is property (3) of Theorem
\ref{thm:4.6}. Clifford theory relates the irreducible representations of these 
algebras to those of \eqref{eq:4.35} and \eqref{eq:4.32}, in a way already 
discussed after \eqref{eq:4.29}. This implies that the desired property of the 
cuspidal support persists to the stratified
equivalence between \eqref{eq:4.35} and \eqref{eq:4.32}, which underlies part (a).
\end{proof}

\section{Relation with the local Langlands correspondence}
\label{sec:LLC}

We show how the local Langlands correspondence (LLC) for $G$ and $G^\sharp$ can be 
reconstructed in terms of twisted extended quotients. 

Let $\mb W_F$ be the Weil group of the local non-archimedean field $F$.
Recall that the Langlands dual group of $G = \GL_m (D)$ is $\check G = 
\GL_{md}(\C)$. A Langlands parameter for $G$ is continuous group homomorphism
$\phi : \mb W_F \times \SL_2 (\C) \to \check G$ such that:
\begin{itemize}
\item $\phi \big|_{\SL_2 (\C)}$ is a homomorphism of algebraic groups.
\item $\phi (\mb W_F)$ consists of semisimple elements.
\item $\phi$ is relevant for $G$: if $\check L$ is a Levi subgroup of $\check G$
which contains im$(\phi)$ and is minimal for that property, then (the conjugacy
class of) $\check L$ corresponds to (the conjugacy class of) a Levi subgroup of $G$.
\end{itemize}
We denote the collection of Langlands parameters for $G$, modulo conjugation by
$\check G$, by $\Phi (G)$. 

Every smooth character of $G$ is of the form $\nu \circ \Nrd$, with $\nu$ a 
smooth character of $F^\times$. Via Artin reciprocity it determines a Langlands
parameter (trivial on $\SL_2 (\C)$)
\begin{equation}\label{eq:6.3}
\hat \nu : \mb W_F \to \C^\times \cong Z (\GL_{md}(\C)) . 
\end{equation}
For any $\phi \in \Phi (G) ,\; \phi \hat \nu$ is a well-defined element of 
$\Phi (G)$ because the image of $\hat \nu$ is central in $\check G$.

\begin{thm}\label{thm:6.5}
The local Langlands correspondence for $G$ is a canonical bijection 
\[
\mathrm{rec}_{D,m} : \Irr (G) \to \Phi (G)
\]
with the following properties:
\enuma{
\item $\pi \in \Irr (G)$ is tempered if and only if $\mathrm{rec}_{D,m}(\pi)$ 
is bounded, that is, if \\
$\mathrm{rec}_{D,m}(\pi) (\mb W_F)$ is a bounded subset of $\check G$.
\item The L-packet $\Pi_\phi (G)$ is the single representation 
$\mathrm{rec}^{-1}_{D,m}(\phi)$.
\item $\mathrm{rec}_{D,m}$ is equivariant for the two actions of 
$\Irr (G / G^\sharp)$: on $\Irr (G)$ by twisting with smooth characters and on 
$\Phi (G)$ by multiplication with central Langlands parameters as in \eqref{eq:6.3}.
}
\end{thm}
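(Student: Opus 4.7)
The plan is to define $\rec_{D,m}$ by transport along the Jacquet--Langlands correspondence from the LLC for split $\GL_{md}$, and then verify the three properties one by one. Concretely, let $\rec_{F,md} : \Irr(\GL_{md}(F)) \isom \Phi(\GL_{md}(F))$ be the local Langlands correspondence established by Harris--Taylor and Henniart, and let $\JL_{D,m} : \Irr(G) \to \Irr(\GL_{md}(F))$ be the Jacquet--Langlands map extended to all irreducibles (as in Badulescu--Renard). Then I would set
\[
\rec_{D,m} := \rec_{F,md} \circ \JL_{D,m}.
\]
The first task is bijectivity: the image of $\JL_{D,m}$ inside $\Irr(\GL_{md}(F))$ should correspond exactly, under $\rec_{F,md}$, to parameters that factor through a Levi of $\check G = \GL_{md}(\C)$ which is relevant for $G$. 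On the discrete series this is built into the Jacquet--Langlands correspondence (a discrete series of $\GL_{md}(F)$ is in the image of $\JL_{D,m}$ iff its parameter is relevant), and for general irreducibles one reduces to this case by parabolic induction, since both LLC and JL are compatible with Langlands quotients.

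Next I would check properties (a)--(c). For (a), temperedness of $\pi \in \Irr(\GL_{md}(F))$ is known to be equivalent to boundedness of $\rec_{F,md}(\pi)$, and $\JL_{D,m}$ preserves temperedness (a discrete series goes to a discrete series, and tempered representations are parabolically induced from discrete series on Levi subgroups), so (a) transfers immediately. For (b), singleton L-packets for $G$ follow from the injectivity of $\rec_{D,m}$ together with the fact that $\rec_{F,md}$ already has singleton L-packets; the crucial input here is that $\JL_{D,m}$ is injective, which is the nontrivial content of the extended Jacquet--Langlands correspondence. For (c), the equivariance for character twists, it is standard for $\rec_{F,md}$ that twisting $\pi$ by $\nu \circ \det$ corresponds to multiplying the parameter by $\hat\nu$; and $\JL_{D,m}$ intertwines $\pi \mapsto \pi \otimes (\nu \circ \Nrd)$ with $\pi' \mapsto \pi' \otimes (\nu \circ \det)$ because the reduced norm $\Nrd$ on $G$ corresponds to the determinant on $\GL_{md}(F)$ at the level of characters (both restrict to the same map on the central torus, and $\JL$ preserves central characters). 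Chaining these two equivariances gives (c).

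Canonicity of $\rec_{D,m}$ requires remarking that $\rec_{F,md}$ is canonical (in the sense of being characterized by $\epsilon$- and $L$-factor identities together with compatibilities with class field theory), and that $\JL_{D,m}$ is likewise canonically characterized by a character identity on elliptic regular elements. Thus the composition is canonical.

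The main obstacle I expect is the characterization of the image of $\JL_{D,m}$, i.e. matching ``relevance for $G$'' (which is defined intrinsically in terms of Levi subgroups of $\check G$ corresponding to Levi subgroups of $G$) with the condition that a parameter $\phi$ factors through a Levi of the form $\prod_j \GL_{n_j}(\C)$ with each $n_j$ divisible by $d$. This combinatorial matching, together with checking that the construction is insensitive to the choices made in extending Jacquet--Langlands beyond the discrete series, is where most of the real work lies; everything else is formal compatibility.
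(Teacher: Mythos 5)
Your proposal follows essentially the same route as the paper: the paper likewise obtains $\rec_{D,m}$ by transporting the LLC for $\GL_{md}(F)$ through the Jacquet--Langlands correspondence (citing Hiraga--Saito and the authors' earlier work, which rest on Deligne--Kazhdan--Vign\'eras and Badulescu), deduces (b) directly from bijectivity, and proves (c) by reducing to essentially square-integrable representations of Levi subgroups and transferring to $\GL_n(F)$, exactly as you sketch. The only caution is that your literal formula $\rec_{D,m} = \rec_{F,md}\circ \JL_{D,m}$ with $\JL_{D,m}$ ``extended to all irreducibles'' must be understood as you later describe it---via discrete series and Langlands quotients---since Badulescu's extension of $\JL$ is a homomorphism of Grothendieck groups in the opposite direction which can annihilate irreducibles, so it does not by itself furnish an injection $\Irr(G)\to\Irr(\GL_{md}(F))$.
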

\begin{proof}
For the bijection and part (a) see \cite[\S 11]{HiSa} and \cite[\S 2]{ABPS3}.
Ultimately it relies on the Jacquet--Langlands correspondence from \cite{DKV,Bad}.\\
(b) This is a direct consequence of the bijectivity.\\
(c) Since $\mathrm{rec}_{D,m}$ is determined completely by its behaviour on 
essentially square integrable representations of Levi subgroups of $G$ 
\cite[(13)]{ABPS3}, it suffices to prove (c) for such representations. Via the 
Jacquet--Langlands correspondence the issue can be transferred to $\Irr (\GL_n (F))$ 
with $n \leq md$. For general linear groups (c) is a well-known property of the LLC, 
and in fact constitutes a starting point of the construction, confer \cite[1.2]{Hen}.
\end{proof}

For $\fs = [L,\omega]_G$ we define $\Phi (G)^\fs$ as the image of $\Irr^\fs (G)$
under the bijection $\mathrm{rec}_{D,m}$. Similarly we define $\Phi (L)^{\fs_L} \subset \Phi (L)$.

\begin{lem}\label{lem:6.1}
The $\LLC$ for $G$ fits in a commutative diagram of canonical bijections
\[
\xymatrix{
\Irr^\fs (G) \ar@{->}[rr]^{\mathrm{rec}_{D,m}} \ar@{<->}[d] & & 
\Phi (G)^\fs \ar@{<->}[d] \\
T_\fs \q W_\fs  \ar@{<->}[rr] && \Phi (L)^{\fs_L} \q W_\fs 
}
\]
Here the bottom map comes from the $\LLC$ for $\Irr^{\fs_L} (L)$ and the left hand 
side comes from Theorem \ref{thm:4.6}. 
\enuma{
\item Suppose that $[\phi_L] \in \Phi (L)^{\fs_L}$
and that $\rho \in \Irr (W_{\fs,\phi_L})$ has as Springer parameter a unipotent 
class $[u] \in Z_{\check G_\fs}(\phi_L)$. Then there is a representative $u$ such 
that the right hand side sends $[\phi_L,\rho]$ to a Langlands parameter $\phi$ with
$\phi \big|_{\mb W_F} = \phi_L \big|_{\mb W_F}$ and $\phi (1,\matje{1}{1}{0}{1})
= \phi_L (1,\matje{1}{1}{0}{1}) u$.
\item Two elements $[t,\rho]$ and $[t',\rho']$ of $T_\fs \q W_\fs$ map to the same
$G$-representation if and only if there exists a $w \in W_\fs$ such that $w t = t'$
and the $W_{\fs,t}$-representations $\rho , w \rho'$ have Springer parameters 
involving the same unipotent class.
}
\end{lem}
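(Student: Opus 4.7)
The plan is to first set up the right-hand vertical bijection in the diagram, then verify commutativity by reducing to the tempered square-integrable case, and finally read off (a) and (b) from the construction.

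First, I would define the right vertical map $\Phi(G)^\fs \to (\Phi(L)^{\fs_L} \q W_\fs)_2$ as follows. Given $[\phi] \in \Phi(G)^\fs$, a minimal Levi of $\check G$ containing $\phi(\mb W_F)$ corresponds to a Levi $L' \subset G$; by definition of $\fs$, we may conjugate so that $L'$ equals (a standard representative of) $L$, and then $\phi|_{\mb W_F}$ together with $\phi|_{\SL_2}$ restricted to the unipotent part lying in $\check L$ yields a well-defined $[\phi_L] \in \Phi(L)^{\fs_L}$. The remaining $\SL_2$-part produces a unipotent $u \in Z_{\check G_\fs}(\phi_L)$ (with $\check G_\fs = \prod_i \GL_{e_i}(\C)$) satisfying $\phi(1,\matje{1}{1}{0}{1}) = \phi_L(1,\matje{1}{1}{0}{1}) u$. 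Since $W_{\fs,\phi_L}$ acts as a product of symmetric groups on the product of general linear groups $Z_{\check G_\fs}(\phi_L)$, the Springer correspondence yields a bijection between $\Irr(W_{\fs,\phi_L})$ and the unipotent classes of $Z_{\check G_\fs}(\phi_L)$; send $[u]$ to its Springer partner $\rho$. The assignment $[\phi] \mapsto [\phi_L,\rho]$ is then a bijection by standard Jordan decomposition of Langlands parameters together with bijectivity of the Springer correspondence in type $A$.

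For commutativity, I would reduce to the tempered case. Property (1) of Theorem~\ref{thm:4.6} together with Theorem~\ref{thm:6.5}(a) show that both vertical maps and the top horizontal map preserve temperedness, and property (2) of Theorem~\ref{thm:4.6} shows the whole diagram is determined by its restriction to tempered representations via analytic continuation; the analogous statement on the right follows because bounded parameters are exactly those with unitary $\phi_L|_{\mb W_F}$, corresponding to $t \in T_{\fs,\uni}$. For a tempered $\pi \in \Irr^\fs_\temp(G)$, property (4) of Theorem~\ref{thm:4.6} writes $\pi = I_{PM}^G(\delta)$ with $\delta$ square-integrable modulo centre and $\check M$ determined by the Springer class $[u]$. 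The LLC for $G$ is built from the LLC on $\GL_n(F)$ via Jacquet--Langlands and is compatible with parabolic induction (see \cite[(13)]{ABPS4}, \cite{HiSa}); essentially square-integrable representations of a Levi correspond to Langlands parameters whose $\SL_2$-part is the distinguished unipotent of the relevant Levi of $\check G$. Chasing this through the identifications \eqref{eq:5.17}--\eqref{eq:5.21} and the construction above yields the commutativity.

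Part (a) is then just a restatement of the construction of the right vertical map, with the residual ambiguity in choosing a representative $u$ of its $W_\fs$-class reflecting the freedom in picking a representative of $[\phi_L,\rho]$ in the extended quotient.

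For (b), I check the three bullets of Conjecture~\ref{conj:2}. The extended quotient in Theorem~\ref{thm:4.6} is the untwisted $(T_\fs \q W_\fs)_2$, so the 2-cocycle $\natural$ is trivial and the first bullet is automatic. By Theorem~\ref{thm:6.5}(b), every L-packet $\Pi_\phi(G)$ is a singleton, so the bijection $\mathrm{rec}_{D,m}$ is canonical on the nose and the second bullet is automatic. For the third bullet: two representations lie in the same (singleton) L-packet if and only if they are equal, i.e.\ correspond to the same class in $(T_\fs \q W_\fs)_2$; this means there exists $w \in W_\fs$ with $wt' = t$ and $\rho = w \cdot \rho'$. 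Since the Springer correspondence for $W_{\fs,t}$ (a product of symmetric groups acting on a product of $\GL$'s) is a bijection with unipotent classes, $\rho = w \cdot \rho'$ is equivalent to having the same Springer unipotent class, which is the stated condition. The main obstacle in this program is the commutativity step, where one must faithfully match the Kazhdan--Lusztig/Kato parametrization of $\Irr(\cH(T_\fs, W_\fs, q_\fs))$ with the Langlands parametrization through Jacquet--Langlands; this has already been done implicitly in the earlier sections and in \cite{ABPS4}.
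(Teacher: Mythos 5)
Your proposal is correct and follows essentially the same route as the paper: the only presentational difference is that you construct the right-hand vertical map explicitly (via Jordan decomposition of parameters and the type-$A$ Springer correspondence) and then verify commutativity, whereas the paper defines that map as the unique completion of the already-canonical diagram and then computes it in part (a) --- but the substantive computation (reducing to the tempered case by analytic continuation and matching $I_{PM}^G(\delta)$, for $\delta$ square-integrable modulo centre attached to the distinguished unipotent of $\check M$, with the parameter satisfying $\phi(1,\matje{1}{1}{0}{1}) = \phi_L (1,\matje{1}{1}{0}{1}) u$) is identical. Your verification of (b) via singleton L-packets and the bijectivity of the Springer correspondence for products of symmetric groups likewise matches the paper's argument.
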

\begin{proof}
The top horizontal and left vertical maps have already been established as 
bijective and canonical. The LLC for $L$ is the Cartesian product of the LLCs for the 
factors of $L$. Hence it is $W(G,L)$-equivariant, and its restriction to 
$T_\fs \longleftrightarrow \Phi (L)^{\fs_L}$ is $W_\fs$-equivariant. The canonicity
and bijectivity of the LLC for $L$ are inherited by the bottom horizontal map in the
diagram. This leaves a unique, canonical way to complete the
commutative diagram. \\
(a) To work out the map on the right hand side, it suffices to consider 
\[
L = \prod\nolimits_i L_i^{e_i} \quad \text{and} \quad 
\omega = \prod\nolimits_i \omega_i^{e_i}
\]
such that $(L_i,\omega_i)$ is not isomorphic to $(L_j,\omega_j)$ for $i \neq j$. 
Let $\phi_i : \mb W_F \times \SL_2 (\C) \to \GL_{m_i d}(\C)$ be a Langlands 
parameter for $\omega_i$. Then 
\[
\phi_L = \prod\nolimits_i \phi_i^{e_i} : \mb W_F \times \SL_2 (\C) \to 
\prod\nolimits_i \GL_{m_i d}(\C)^{e_i}
\]
is a Langlands parameter for $\omega$. We have $W_{\fs,\phi_L} = \prod_i S_{e_i}$,
where $S_{e_i}$ is embedded in $N_{\GL_{e_i d_i m}(\C)}(\GL_{m_i d}(\C)^{e_i})$
as permutation matrices. The unipotent class
\[
[u] = [\prod\nolimits_i u_i] \in 
\prod\nolimits_i \GL_{e_i m_i d}(\C) \subset Z_{\check G_\fs}(\phi_L)
\]
is determined by the standard Levi subgroup in which it is distinguished, say
\[
\check M = \prod\nolimits_{i,j} \GL_{b_{ij} m_i d}(\C)^{c_{ij}} 
\quad \text{ with } \quad \sum\nolimits_j c_{ij} b_{ij} = e_i .
\]
Assume for the moment that $\omega$ is tempered. By Theorem \ref{thm:4.6}
$[\omega,\rho] \in T_{\fs,\uni} \q W_\fs$ corresponds to $I_{PM}^G (\delta)$,
where 
\[
\delta = \prod\nolimits_{i,j} \delta_{ij}^{c_{ij}} \in \Irr^{[L,\omega]_M}_\temp (M)
\]
is the unique square-integrable modulo centre representation such that 
$W_{\fs,M} \omega$ is the unitary part of the cuspidal support of $\delta$. 
By construction \cite[\S 2]{ABPS4} the Langlands parameter $\phi$ of 
$I_{PM}^G (\delta)$ is the same as that of $\delta$, namely
$\phi = \prod_{i,j} \phi_{ij}^{c_{ij}}$ with $\phi_{ij} \big|_{\mb W_F} =
\phi_i^{b_{ij}} \big|_{\mb W_F}$ and 
\[
\phi_{ij} (1,\matje{1}{1}{0}{1}) = \phi_i (1,\matje{1}{1}{0}{1})^{b_{ij}} u_{ij} 
\]
where $u_{ij}$ is a distinguished unipotent element in 
$Z_{\GL_{b_{ij} m_i d}(\C)}(\GL_{m_i d}(\C)^{b_{ij}})$. Thus 
$\phi (1,\matje{1}{1}{0}{1})$ is distinguished in $\check M$ and $\phi$ has the
asserted shape. 

The general case, where $\omega$ is not necessarily tempered, follows from the
tempered case. The reason is that all the maps in the commutative diagram (a priori
except the right hand side) can be obtained from their tempered parts by some
kind of analytic continuation, as in \cite{ABPS1} and Theorem \ref{thm:4.6}.\\
(b) By Theorem \ref{thm:6.5}.b the elements of $T_\fs \q W_\fs$
are in bijection with the L-packets in $\Irr^\fs (G)$. Two elements $[t,\rho]$
and $[t',\rho']$ are equal if and only if there is a $w \in W_\fs$ such that
$w t' = t$ and $w \cdot \rho' = \rho$. We note also that for every $t \in T_\fs$ 
the group $W_{\fs,t} = W(R_{\fs,t})$ is product of symmetric groups. Hence all 
irreducible representations of $W_{\fs,t}$ are parametrized by different unipotent 
classes in a connected complex reductive group with maximal torus $T_\fs$ and 
root system $R_{\fs,t}$. So the condition becomes that $\rho$ and $w \cdot \rho'$
have the same unipotent class as Springer parameter.
\end{proof}

Let $\Irr_\cusp (L)$ be the space of supercuspidal $L$-representations and let
$\Phi (L)_\cusp$ be its image in $\Phi (L)$. The Weyl group 
\[
W(G,L) = N_G (L) / L \cong N_{\check G}(\check L) / \check L 
\]
acts naturally on both sets.

\begin{thm}\label{thm:6.2}
Let $\mathcal L$ be a set of representatives for the conjugacy classes of Levi
subgroups of $G$. The maps from Lemma \ref{lem:6.1} combine to a commutative
diagram of canonical bijections
\[
\xymatrix{
\Irr (G) \ar@{->}[rr]^{\mathrm{rec}_{D,m}} \ar@{<->}[d] & & 
\Phi (G) \ar@{<->}[d] \\
\bigsqcup\nolimits_{L \in \mathcal L} 
\Irr_\cusp (L) \q W (G,L)  \ar@{<->}[rr] && 
\bigsqcup\nolimits_{L \in \mathcal L} \Phi (L)_\cusp \q W (G,L) 
}
\]
Here the tempered representations correspond to the bounded Langlands parameters.
\end{thm}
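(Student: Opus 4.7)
The plan is to assemble Theorem \ref{thm:6.2} from Lemma \ref{lem:6.1} by decomposing both the source and target according to Bernstein components, and then checking temperedness separately. I would begin with the Bernstein decomposition $\Irr(G) = \bigsqcup_{\fs \in \mathfrak{B}(G)} \Irr^\fs(G)$, and its counterpart on the parameter side: since $\mathrm{rec}_{D,m}$ respects cuspidal support (by construction through essentially square-integrable representations of Levi subgroups, cf.\ Theorem \ref{thm:6.5}), one gets $\Phi(G) = \bigsqcup_\fs \Phi(G)^\fs$ with $\Phi(G)^\fs := \mathrm{rec}_{D,m}(\Irr^\fs(G))$.

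Next I would show that each of the two bottom sets in the diagram admits an analogous decomposition, so that Lemma \ref{lem:6.1} applies componentwise. For a fixed $L \in \mathcal{L}$, the $W(G,L)$-action permutes the Bernstein tori $T_{\fs_L} = \Irr^{[L,\omega]_L}(L) \subset \Irr_\cusp(L)$, and the orbits are in bijection with the inertial classes $\fs \in \mathfrak{B}(G)$ whose associated Levi is $L$. Since the stabilizer of $T_\fs$ inside $W(G,L)$ is precisely $W_\fs$, the twisted extended quotient splits as
\[
\big( \Irr_\cusp(L) \q W(G,L) \big)_2 \;=\;
\bigsqcup_{\fs : L_\fs = L} \big( T_\fs \q W_\fs \big)_2 ,
\]
and taking the disjoint union over $\mathcal{L}$ recovers $\bigsqcup_{\fs \in \mathfrak{B}(G)}(T_\fs \q W_\fs)_2$. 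The same argument applied on the dual side, using the LLC at the Levi level (which is itself a special case of Theorem \ref{thm:6.5} since Levi subgroups of $G=\GL_m(D)$ are products of the same type), yields $\bigsqcup_\fs (\Phi(L)^{\fs_L} \q W_\fs)_2$.

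Once the pieces match, Lemma \ref{lem:6.1} gives canonically, for each $\fs$, a commutative square with bijective sides, and I would glue these squares over $\fs \in \mathfrak{B}(G)$ to obtain the global diagram. For the temperedness claim, I would combine Theorem \ref{thm:4.6}(1), which matches $\Irr^\fs_\temp(G)$ with $(T_{\fs,\uni} \q W_\fs)_2$, with Theorem \ref{thm:6.5}(a), which identifies tempered $G$-representations with bounded Langlands parameters. At the cuspidal level, unitary supercuspidals of $L$ correspond under the LLC for Levi subgroups to bounded (cuspidal) Langlands parameters, and the $W(G,L)$-action preserves both the unitary and the bounded loci, so the restriction is compatible with forming extended quotients.

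The principal obstacle is not a substantive new argument but the bookkeeping around canonicity: one must ensure that the componentwise bijections supplied by Lemma \ref{lem:6.1} glue to a globally well-defined map independent of the chosen representative $(L,\omega)$ of each inertial class, and that this gluing is compatible with conjugation by $N_G(L)$. This reduces to the $W(G,L)$-equivariance of $\mathrm{rec}_{D,m}$ restricted to Levi subgroups, which is built into the LLC for general linear groups through the Jacquet--Langlands correspondence invoked in Theorem \ref{thm:6.5}. Granting this equivariance, the commutativity of the square and the canonicity of every arrow follow formally from the corresponding properties at the level of single Bernstein components.
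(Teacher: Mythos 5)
Your proposal is correct and follows essentially the same route as the paper: decompose each extended quotient $(\Irr_\cusp(L) \q W(G,L))_2$ into pieces $(T_\fs \q W_\fs)_2$ indexed by inertial classes using the fact that $W_\fs$ is the stabilizer of $T_\fs$ in $W(G,L)$, transfer this to the parameter side via the $W(G,L)$-equivariance of the LLC for the Levi (which holds because $W(G,L)$ merely permutes direct factors), glue the commutative squares of Lemma \ref{lem:6.1}, and read off temperedness/boundedness from Theorems \ref{thm:6.5}.a and \ref{thm:4.6}. The paper's own proof is exactly this, stated more tersely.
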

\begin{proof}
The action of $W(G,L)$ on $L$ is simply by permuting some direct factors of
$L$, and the same for $\check L$. Hence the canonical bijection
$\Irr (L) \leftrightarrow \Phi (L)$ is $W(G,L)$-equivariant. The group $W_\fs$
is defined as the stabilizer in $W(G,L)$ of $T_\fs = \Irr^{\fs_L}(L)$, and
by the above equivariance it is also the stabilizer of $\Phi^{\fs_L} (L)$.
Consequently
\begin{align*}
& \Irr_\cusp (L) \q W (G,L) \cong \bigsqcup\nolimits_{\fs = [L,\omega]_G}
T_\fs \q W_\fs , \\
& \Phi (L)_\cusp \q W (G,L) \cong  \bigsqcup\nolimits_{\fs = [L,\omega]_G}
\Phi^{\fs_L} (L) \q W_\fs .
\end{align*}
Now we simply take the union of the commutative diagrams of Lemma \ref{lem:6.1}.
The characterization of temperedness and boundedness comes from Theorems 
\ref{thm:6.5}.a and \ref{thm:4.9}.c.
\end{proof}

To formulate the LLC for $G^\sharp$, we need enhanced Langlands parameters. In fact
these are already present in the LLC for $G$, but there the enhancement can be
neglected without any problems.

Recall that a Langlands parameter for $G^\sharp = \GL_m (D)_\der$ is a homomorphism
$\phi : \mb W_F \times \SL_2 (\C) \to \mathrm{PGL}_{md}(\C)$ subject to the same 
requirements as a Langlands parameter for $G$. The set of such parameters modulo 
conjugation by $\check{G^\sharp} = \mathrm{PGL}_{md}(\C)$ is denoted 
$\Phi (G^\sharp)$. We note that the simply connected cover $\SL_{md}(\C)$ of 
$\mathrm{PGL}_{md}(\C)$ also acts by conjugation on Langlands parameters for $G^\sharp$. 

An enhancement of $\phi$ is an irreducible representation $\rho$ of 
$\pi_0 (Z_{\SL_{md}(\C)}(\phi))$. In order that $(\phi,\rho)$ is relevant for 
$G^\sharp$, an extra condition is needed. For this we have to regard $D$ as part 
of the data of $G^\sharp$, in other words, we must consider not just the inner form 
$G^\sharp$ of $\SL_{md}(F)$, but even the inner twist determined by $(G^\sharp,D)$. 
The Hasse invariant of $D$ gives a character $\chi_D$ of 
$Z (\SL_{md}(\C)) \cong \Z / md \Z$ with kernel $m \Z / md \Z$. Notice that, by 
Schur's lemma, every enhancement $\rho$ of $\phi$ determines a character of 
$Z(\SL_{md}(\C))$. We define an enhanced Langlands parameter for $G^\sharp = 
\GL_m (D)_\der$ as a pair $(\phi,\rho)$ such that $\rho \big|_{Z(\SL_{md}(\C))} = 
\chi_D$. The collection of these,
modulo conjugation by $\SL_{md}(\C)$, is denoted $\Phi_e (G^\sharp)$. 

The LLC for $G^\sharp$ \cite{ABPS3} is a bijection
\begin{equation}\label{eq:6.1}
\Phi_e (G^\sharp) \longleftrightarrow \Irr (G^\sharp) : 
(\phi,\rho) \mapsto \pi (\phi,\rho) .
\end{equation}
such that
\begin{enumerate}[(i)]
\item if $\phi$ lifts to a Langlands parameter $\tilde \phi$ for $G$, 
then $\pi (\phi,\rho)$
is a direct summand of $\Res^G_{G^\sharp}(\mathrm{rec}_{D,m}^{-1} (\tilde \phi))$,
\item $\pi (\phi,\rho)$ is tempered if and only if $\phi$ is bounded,
\item the L-packet 
\[
\Pi_\phi (G^\sharp) = \{ \pi (\phi,\rho) : \rho \in 
\Irr \big( \pi_0 (Z_{\SL_{md}(\C)}(\phi)) \big) , 
\rho \big|_{Z(\SL_{md}(\C))} = \chi_D \} 
\]
is canonically determined.
\end{enumerate}

As $\Irr^\fs (G^\sharp)$ is defined in terms of restriction from $\Irr^\fs (G)$, 
it is a union of L-packets for $G^\sharp$. With (i) it canonically determines a set 
$\Phi_e (G^\sharp)^\fs$ of enhanced Langlands parameters for $G^\sharp$.

In the same way as for $G$, the LLC for a Levi subgroup $L^\sharp = L \cap G^\sharp$ 
follows from that for $L = \prod_i \GL_{m_i} (D)$. It involves enhancements 
from the action of 
\[
(\check{L^\sharp})_{sc} = \SL_{md}(\C) \cap \prod\nolimits_i \GL_{m_i d}(\C) .
\]
Given $\fs_L = [L,\omega]_L,\; \Irr^{\fs_L}(L^\sharp)$ is a union of L-packets for 
$L^\sharp$. Hence the corresponding set $\Phi_e (L^\sharp)^{\fs_L}$ of enhanced 
Langlands parameters is well-defined.

\begin{lem}\label{lem:6.3}
The $\LLC$ for $G^\sharp$ and the maps from Lemma \ref{lem:4.7} and 
Theorem \ref{thm:4.9}.b fit in the following commutative bijective diagram:
\[
\xymatrix{
\Irr^\fs (G^\sharp) \ar@{<->}[r] \ar@{<->}[d] &
\Phi_e (G^\sharp)^\fs \ar@{<->}[d] \\
(\Irr^{\fs_L}(L^\sharp) \q W_\fs^\sharp )_{\kappa_\omega} 
\ar@{<->}[r] \ar@{<->}[d] &
(\Phi_e (L^\sharp)^{\fs_L} \q W_\fs^\sharp )_{\kappa_\omega} \ar@{<->}[d] \\
\Big( \big( T_\fs \q X^L (\fs) X_\nr (L / L^\sharp) \big)_{\kappa_\omega} \q
W_\fs^\sharp \Big)_{\kappa_\omega} \ar@{<->}[r] \ar@{<->}[d] &
\Big( \big( \Phi (L)^{\fs_L} \q X^L (\fs) X_\nr (L / L^\sharp) 
\big)_{\kappa_\omega} \q W_\fs^\sharp \Big)_{\kappa_\omega} \ar@{<->}[d] \\
\Big( T_\fs \q \Stab (\fs) X_\nr (L / L^\sharp) \Big)_{\kappa_\omega} 
\ar@{<->}[r] \ar@{<->}[d] & \Big( \Phi (L)^{\fs_L} \q 
\Stab (\fs) X_\nr (L / L^\sharp) \Big)_{\kappa_\omega} \ar@{<->}[d] \\
\Big( \big( T_\fs \q W_\fs \big) \q \Stab (\fs)^+ X_\nr (L / L^\sharp) 
\Big)_{\kappa_\omega} \ar@{<->}[r]  &
\Big( \big( \Phi (L)^{\fs_L} \q W_\fs \big) \q
\Stab (\fs)^+ X_\nr (L / L^\sharp) \Big)_{\kappa_\omega}  
}\] 
All these maps are canonical up to permutations within L-packets.
In the last row the collection of $L$-packets is in bijection with
$\big( T_\fs \q W_\fs \big) / \Stab (\fs)^+ X_\nr (L / L^\sharp)$ and with
$\big( \Phi (L)^{\fs_L} \q W_\fs \big) / \Stab (\fs)^+ X_\nr (L / L^\sharp)$.
\end{lem}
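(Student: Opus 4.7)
The plan is to build the right-hand column of the diagram as the image of the left-hand column under the LLC for $L^\sharp$, then reduce the top-square commutativity to the already-known commutativity of Lemma~\ref{lem:6.1} for $G$. The left-hand column is given to us: Theorem~\ref{thm:4.9}.b identifies $\Irr^\fs(G^\sharp)$ with the fourth row, the third and fifth rows are obtained from the fourth by the general rewriting principle Lemma~\ref{lem:B.1} (using \eqref{eq:1.6}, \eqref{eq:1.11} and \eqref{eq:1.13} to split $\Stab(\fs) X_\nr(L / L^\sharp)$ into a semidirect product of $W_\fs^\sharp$ with the remaining piece), and the second row is the image of the third under the bijection of Lemma~\ref{lem:4.7} applied to $L^\sharp$. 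On the parameter side, the second-row bijection is constructed exactly as the LLC for $G^\sharp$ is constructed from $\mathrm{rec}_{D,m}$: a summand of $\Res^L_{L^\sharp}(\omega \otimes \chi)$ corresponds to an enhancement of its Langlands parameter, with the constraint $\rho|_{Z(\SL_{md}(\C))} = \chi_D$ guaranteeing relevance. By Theorem~\ref{thm:6.5}.c this bijection is $W_\fs^\sharp$-equivariant and intertwines the free action of $X_\nr(L / L^\sharp)$ with twists by central Langlands parameters.

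Having row~2 on the right, the remaining rows on the right are produced by rerunning the same rewriting principle, because $\Phi(L)^{\fs_L}$ carries the same $W_\fs^\sharp$-action and the same 2-cocycle data as $T_\fs \cong \Irr^{\fs_L}(L)$. Commutativity of each lower square is therefore formal, a direct consequence of the equivariance of row~2, together with Lemma~\ref{lem:6.1} which is responsible for the horizontal bijection in the bottom two rows (where the base is replaced by $(T_\fs \q W_\fs)_2$ and $(\Phi(L)^{\fs_L} \q W_\fs)_2$, respectively).

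The one genuine verification is the top square. Take $\pi \in \Irr^\fs(G^\sharp)$ and a lift $\tilde\pi \in \Irr^\fs(G)$ with $\pi \subset \Res^G_{G^\sharp}(\tilde\pi)$. By Lemma~\ref{lem:6.1} the LLC image of $\tilde\pi$ is matched with its extended-quotient parameter in $(T_\fs \q W_\fs)_2$. On the representation-theoretic side the $\Stab(\fs)^+ X_\nr(L / L^\sharp)$-orbit of this parameter indexes the summands of $\Res^G_{G^\sharp}(\tilde\pi)$, via \eqref{eq:2.1}--\eqref{eq:2.2} and the proof of Theorem~\ref{thm:4.9}.b; on the parameter side the same orbit indexes the relevant enhancements above the restricted parameter. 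The defining property of the LLC for $G^\sharp$ (that $\pi(\phi,\rho)$ is the summand corresponding to $\rho$) is then precisely the commutativity we need, and the final assertion about L-packets is immediate: by the third bullet characterization of \eqref{eq:6.1}, $\Pi_\phi(G^\sharp)$ is the fiber of the map forgetting the enhancement, which on the bottom parameter row is exactly the quotient by $\Stab(\fs)^+ X_\nr(L / L^\sharp)$.

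The hard part will be the compatibility of the 2-cocycle $\kappa_\omega$ throughout the diagram. On the left it is built from the intertwining operators $J(\gamma,\omega)$ via \eqref{eq:2.21} and \eqref{eq:4.30}; on the right it must be identified with the 2-cocycle coming from the central extension $\SL_{md}(\C) \to \mathrm{PGL}_{md}(\C)$ restricted to component groups of centralizers, with $\chi_D$ selecting the relevant enhancements. The essential case of this identification is carried out in \cite{ABPS4} for $G^\sharp$ and $L^\sharp$ themselves; what is required here is to propagate it through each rewrite of the extended quotient, which is purely formal once the base match in row~2 respects $\kappa_\omega$. In this way every horizontal map in the diagram is exhibited as a bijection, and the vertical compositions commute with the LLC up to the permutations within $L$-packets that are unavoidable because the LLC for $L^\sharp$ is only canonical up to such permutations.
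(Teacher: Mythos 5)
Your proposal follows the same architecture as the paper's proof in the lower part of the diagram: the left column is assembled from Theorem \ref{thm:4.9}.b, Lemma \ref{lem:4.7} and the iterated extended quotients of Appendix \ref{app:extquot} (the paper uses Corollary \ref{cor:B.2} together with \eqref{eq:4.27}, and the observation that $\kappa_\omega$ is trivial on $W_\fs$), the three lower horizontal bijections come from the equivariance of the LLC for $L$, the second from that for $L^\sharp$, and the lower squares then commute formally. The genuine gap is in your treatment of the top two squares. You announce "the one genuine verification is the top square", but you never construct the right-hand vertical map $\Phi_e (G^\sharp)^\fs \leftrightarrow (\Phi_e (L^\sharp)^{\fs_L} \q W_\fs^\sharp )_{\kappa_\omega}$ (nor the one below it), so there are not yet four maps whose compatibility could be checked. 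Moreover the check you sketch only matches L-packets with fibres: the defining property of \eqref{eq:6.1} says that $\pi(\phi,\rho)$ is \emph{a} summand of $\Res^G_{G^\sharp}(\mathrm{rec}_{D,m}^{-1}(\tilde\phi))$, it does not single out \emph{which} summand corresponds to which enhancement $\rho$, so no element-by-element verification is available. The paper resolves this by \emph{defining} the two upper right-hand vertical maps as the unique bijections making the squares commute, and then noting that, since all the other maps in those squares are canonical up to permutations within L-packets, so are these. Your final assertion about L-packets has the same circularity: identifying the fibres of the enhancement-forgetting map with L-packets presupposes that the right vertical composition is that forgetful map, which the paper instead establishes on the representation side by tracing through \eqref{eq:3.4} and Lemma \ref{lem:6.1}.

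A second, smaller point: the "hard part" you flag, identifying $\kappa_\omega$ with a cocycle arising from the central extension $\SL_{md}(\C) \to \PGL_{md}(\C)$, does not enter this lemma at all. Every twisted extended quotient in the right-hand column is formed with the cocycle $\kappa_\omega$ transported from the representation side via the bijective, equivariant LLC for $L$ and $L^\sharp$; no independent Galois-side construction of the cocycle is made or needed.
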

\begin{proof}
The bijection between the first and the fourth set on the left hand side is given by 
Theorem \ref{thm:4.9}.b. Then Corollary \ref{cor:B.2} and \eqref{eq:4.27} give 
bijections to the third and fifth sets on the left, as the 2-cocycle $\kappa_\omega$ 
is by construction \eqref{eq:4.30} trivial on $W_\fs$. The bijection between the 
second and third sets on the left comes from Lemma \ref{lem:4.7}.a. By Lemma 
\ref{lem:4.7}.b it is canonical up to permutations within L-packets.

The LLC for $L$ is equivariant for permutations of the direct factors of $L$ and for 
twisting with characters of $L$ (because the LLC for $\GL_m (D)$ is so). This gives 
the three lower horizontal bijections. Applying Corollary \ref{cor:B.2} to the 
three lower terms on the right hand side gives bijections between them, and shows 
that the two lower squares in the diagram are canonical and commutative.

Similarly the LLC for $L^\sharp$ is equivariant for the action of $W_\fs^\sharp$, 
which leads to the second horizontal bijection. We define the upper two maps on 
the right hand side as the unique bijections that make the diagram commute. Since 
all the other maps in the upper two squares are canonical up to permutations within 
L-packets, so are the last two.

An L-packet for $G^\sharp$ consists of the irreducible $G^\sharp$-constituents
of an irreducible $G$-representation. In view of Lemma \ref{lem:6.1}, the
collection of L-packets in $\Irr^\fs (G^\sharp)$ is canonically in bijection 
with $T_\fs \q W_\fs$. From \eqref{eq:3.4} we can see how
\[
\Big( \big( T_\fs \q W_\fs \big) \q \Stab (\fs)^+ X_\nr (L / L^\sharp) 
\Big)_{\kappa_\omega}
\]
is constructed on the level of representations. We take an element $\pi \in 
\Irr^\fs (G)$ and transform it to an irreducible representation of $\cO (T_\fs) 
\rtimes W_\fs$ by a geometric equivalence. Then we form the twisted extended 
quotient by $\Stab (\fs)^+$, using Lemmas \ref{lem:B.6} and \ref{lem:B.7}, which 
corresponds to identifying $\pi$ with $\pi'$ if they have the same restriction to 
$G^\sharp Z(G)$, and decomposing $\pi$ in irreducible 
$G^\sharp Z(G)$-subrepresentations.
Finally we divide out the action of $X_\nr (L^\sharp Z(G) / L^\sharp)$, thus
identifying the $G^\sharp Z(G)$-representations with the same restriction to
$G^\sharp$. The implies the description of the L-packets in the lower left
term of the commutative diagram, and hence also in the lower right term.
\end{proof}

The bijection between the upper and the lower term on the right hand side of Lemma 
\ref{lem:6.3} can also be obtained as follows. First apply the recipe from Lemma 
\ref{lem:6.1} to $\Phi (L)^{\fs_L} \q W_\fs$, then take the twisted extended 
quotient with respect to $\Stab (\fs)^+$, and finally divide out the free action of 
$X_\nr (L^\sharp Z(G) / L^\sharp)$ to reach $\Phi_e (G^\sharp)^{\fs_L}$.

Let $R_{\mf t^\sharp,t}$ be the root system associated to $(G^\sharp ,t)$ be 
Harish-Chandra, by means of zeros of the $\mu$-function \cite[\S V.2]{Wal}.
Recall that the classical Springer correspondence was extended to Weyl groups of
disconnected complex reductive groups in \cite[Theorem 4.4]{ABPS5}.

\begin{lem}\label{lem:6.6}
Let $\mf t^\sharp = [L^\sharp,\sigma^\sharp]_{G^\sharp}$ be an inertial equivalence
class subordinate to $\fs = [L,\omega]_G$. Lemma \ref{lem:4.10}.a and the $\LLC$
for $G^\sharp$ and for $L^\sharp$ provide a commutative, bijective diagram
\[
\xymatrix{
\Irr^{\ft^\sharp} (G^\sharp) \ar@{<->}[r] \ar@{<->}[d] &
\Phi_e (G^\sharp)^{\ft^\sharp} \ar@{<->}[d] \\
(T_{\ft^\sharp} \q W_{\ft^\sharp} )_{\kappa_{\sigma^\sharp}} 
\ar@{<->}[r] &
(\Phi_e (L^\sharp)^{[L^\sharp,\sigma^\sharp]_{L^\sharp}} \q W_{\ft^\sharp} 
)_{\kappa_{\sigma^\sharp}}  
}\] 
Two elements $[t,\rho],[t',\rho'] \in (T_{\ft^\sharp} \q W_{\ft^\sharp} 
)_{\kappa_{\sigma^\sharp}}$ are mapped to $G^\sharp$-representations in the same
L-packet if and only if 
\begin{itemize}
\item $w t' = t$ for some $w \in W_{\ft^\sharp}$;
\item the $W_{\mf t^\sharp,t}$-representations $\rho$ and $w \cdot \rho'$ have 
parameters (in the Springer correspondence for possibly disconnected complex
reductive groups) with the same unipotent class, in the complex reductive group with
maximal torus $T_{\ft^\sharp}$, root system $R_{\mf t^\sharp,t}$ and Weyl group
$W_{\mf t^\sharp,t}$.
\end{itemize}
\end{lem}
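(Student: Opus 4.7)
The plan is to deduce Lemma \ref{lem:6.6} by restricting the commutative diagram of Lemma \ref{lem:6.3} from the Bernstein block $\fs$ down to the sub-block $\mf t^\sharp \prec \fs$, and then identifying each restricted piece with the algebra-theoretic description provided by Lemma \ref{lem:4.10}. Concretely, using the product decomposition $\Rep^\fs(G^\sharp) = \prod_{\mf t^\sharp \prec \fs}\Rep^{\mf t^\sharp}(G^\sharp)$ from \eqref{eq:procat}, each of the five sets appearing in the left column of Lemma \ref{lem:6.3} breaks up according to $\mf t^\sharp$. The $\mf t^\sharp$-piece of $\bigl(T_\fs \q \Stab(\fs) X_\nr(L/L^\sharp)\bigr)_{\kappa_\omega}$ is cut out, via the description \eqref{eq:4.4} of $\Irr^{\fs_L}(L^\sharp)$, by restricting to orbits whose $\Irr(\C[X^L(\omega),\kappa_\omega])$-coordinate lies in the class $\phi_{\omega,X^L(\fs)}\rho$ determined by $\sigma^\sharp$. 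Proposition \ref{prop:4.2}.d identifies the resulting torus as $T_{\mf t^\sharp}$, the stabilizer relation \eqref{eq:4.39} produces $W_{\mf t^\sharp}$ as the isotropy of $V_{\sigma^\sharp}$ in $W_\fs^\sharp$, and restricting $\kappa_\omega$ along \eqref{eq:4.34} yields $\kappa_{\sigma^\sharp}$. This recovers $(T_{\mf t^\sharp} \q W_{\mf t^\sharp})_{\kappa_{\sigma^\sharp}}$ as the $\mf t^\sharp$-piece of the bottom-left corner of Lemma \ref{lem:6.3}, matching Lemma \ref{lem:4.10}.a.

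The right column is handled by the same restriction, using that the LLC for $L^\sharp$ and for $G^\sharp$ respects the Bernstein decomposition: the $\mf t^\sharp$-piece of $\Phi_e(G^\sharp)^\fs$ is by definition the image of $\Irr^{\mf t^\sharp}(G^\sharp)$ under \eqref{eq:6.1}, and similarly on $L^\sharp$. The top horizontal bijection is then the $\mf t^\sharp$-restriction of LLC for $G^\sharp$, the bottom horizontal bijection is LLC for $L^\sharp$ restricted to $[L^\sharp,\sigma^\sharp]_{L^\sharp}$, and commutativity of the resulting square is inherited directly from commutativity of the diagram in Lemma \ref{lem:6.3}; canonicity up to permutations within L-packets is inherited from the same source.

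For the L-packet characterization, the starting point is that an L-packet in $\Irr^{\mf t^\sharp}(G^\sharp)$ is exactly the set of irreducible $G^\sharp$-constituents of $\Res^G_{G^\sharp}(\pi)$ lying in $\mf t^\sharp$, for a single $\pi \in \Irr^\fs(G)$. By Theorem \ref{thm:6.5}.b, L-packets for $G$ are singletons, so the bottom row of Lemma \ref{lem:6.3} assigns $\pi$ to a single orbit representative $[t^G,\rho^G] \in (T_\fs \q W_\fs)_2$. The $\mf t^\sharp$-constituents of $\Res^G_{G^\sharp}(\pi)$ then correspond, via the Clifford-theoretic content of the geometric equivalence in Theorem \ref{thm:4.5} and the parametrization \eqref{eq:5.21}, to exactly those elements $[t,\rho] \in (T_{\mf t^\sharp} \q W_{\mf t^\sharp})_{\kappa_{\sigma^\sharp}}$ for which $t$ lies in the $W_{\mf t^\sharp}$-orbit of the image of $t^G$, and $\rho$ shares its unipotent Springer parameter with the restriction of $\rho^G$. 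Translating this pair of conditions back inside the twisted extended quotient gives precisely the two conditions stated in the lemma; this is the $\mf t^\sharp$-analogue of Lemma \ref{lem:6.1}.b, with the action of $\Stab(\fs)^+ X_\nr(L/L^\sharp)$ replaced by that of its $\mf t^\sharp$-stabilizer $\mf R_{\mf t^\sharp} \cong W_{\mf t^\sharp}/W_\fs$.

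The main obstacle I expect is the Springer-correspondence compatibility used in the last step: one must verify that the Springer parameter of $\rho \in \Irr(\C[W_{\mf t^\sharp,t},\kappa_{\sigma^\sharp}(t)])$, attached inside the complex reductive group with maximal torus $T_{\mf t^\sharp}$, root system $R_{\mf t^\sharp,t}$ and Weyl group $W_{\mf t^\sharp,t}$, matches under the natural surjection $T_\fs \twoheadrightarrow T_{\mf t^\sharp}$ the Springer parameter of the relevant constituent of $\rho^G\big|_{W(R_{\fs,\tilde t})}$ inside $Z_{\check G_\fs}(\tilde t)$ for any lift $\tilde t$ of $t$. This reduces to functoriality of the type-$A$ Springer correspondence under the quotient of complex general linear groups cutting $\check G_{\mf t^\sharp}$ out of $\check G_\fs$; it is implicit in the constructions of Section \ref{sec:geomEquiv}, but must be extracted explicitly to close the argument.
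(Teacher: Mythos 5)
Your overall architecture matches the paper's: the commutative diagram is obtained by restricting Lemma \ref{lem:6.3} to the $\mf t^\sharp$-piece via \eqref{eq:4.4}, and the L-packet criterion is read off by passing to the images in $\big( T_\fs \q W_\fs \big)_2 / \Stab (\fs)^+ X_\nr (L / L^\sharp)$ and invoking Lemma \ref{lem:6.1}.b. That part of your plan is sound.

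However, the step you flag as "the main obstacle" is a genuine gap, and you have also slightly misdiagnosed it. The issue is not functoriality of the type-$A$ Springer correspondence under a quotient of general linear groups. The real difficulty is that the lemma's criterion is phrased using the root system $R_{\mf t^\sharp,t}$ attached to $t \in T_{\mf t^\sharp}$ (via zeros of $\mu$-functions for $G^\sharp$), whereas the Clifford-theoretic descent from $G$ naturally produces the restriction of $\rho^G$ to $W(R_{\fs,\tilde t})$ for a lift $\tilde t \in T_\fs$. Since $W_{\mf t^\sharp,t}$ is in general strictly larger than $W_{\fs,\tilde t}$, one must prove the equality of root systems $R_{\mf t^\sharp,t} = R_{\fs,\tilde t}$ before the two Springer parameters can be compared in the same ambient group. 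The paper closes this by a harmonic-analysis argument: the Harish-Chandra $\mu$-functions $\mu_\alpha$ for $G$, being built from intertwining operators that remain well defined for $G^\sharp$, factor through $T_\fs \to T_{\mf t^\sharp}$, and by Silberger's theorem all zeros of $\mu_\alpha$ are fixed points of $s_\alpha$; hence $\mu_\alpha(t) \neq 0$ whenever $s_\alpha(\tilde t) \neq \tilde t$, which forces the two root systems to coincide. This input cannot be extracted from the constructions of Section \ref{sec:geomEquiv} alone, so your argument as written does not close.
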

\begin{proof}
The commutative diagram is obtained from Lemma \ref{lem:6.3}, taking \eqref{eq:4.4}
into account. 
To see whether $[t,\rho]$ and $[t',\rho']$ belong to the same L-packet, Lemma
\ref{lem:6.3} says that it suffices to look at their images in 
$\big( T_\fs \q W_\fs \big) / \Stab (\fs)^+ X_\nr (L / L^\sharp)$.

Let $\tilde t \in T_\fs$ be a lift of $t$. Then $W_{\ft^\sharp,t}$ is the isotropy
group of $X^L (\fs) X_\nr (L / L^\sharp) (\tilde\sigma^\sharp) \in T_{\ft^\sharp}$
in $W_\fs^\sharp$. Here $\sigma^\sharp$ is a projective representation of
\[
\big( X^L (\fs) X_\nr (L / L^\sharp) \big)_{\tilde t} = X^L (\omega).
\]
With Lemma \ref{lem:B.6} we get 
\[
\sigma^\sharp \rtimes \rho \in 
\Irr \big( \C [(\Stab (\fs) X_\nr (L/L^\sharp)_{\tilde t}, \kappa_\omega] \big) .
\]
The intersection of $(\Stab (\fs) X_\nr (L/L^\sharp)_{\tilde t}$ with
$W_\fs$ is $W_{\fs,\tilde t} = W (R_{\fs,\tilde t})$. Since $W_\fs$ commutes with
$X^L (\fs) X_\nr (L / L^\sharp)$, the restriction of $\sigma^\sharp \rtimes \rho$
to $W_{\fs,\tilde t}$ is $\dim (\sigma^\sharp)$ times 
$\rho \big|_{W_{\fs,\tilde t}}$. We want to show that 
\begin{equation}\label{eq:6.2}
R_{\mf t^\sharp,t} = R_{\fs,\tilde t} ,
\end{equation}
although in general $W_{\mf t^\sharp,t}$ is strictly larger than $W_{\fs,\tilde t}$.
Both root systems can be defined in terms of zeros of Harish-Chandra $\mu$-functions
associated to roots $\alpha \in R_\fs$. The function $\mu_\alpha$ (for $G$) is 
defined via intertwining operators betweeen $G$-representations, see 
\cite[\S IV.3 and \S V.2]{Wal}. These remain well-defined as intertwining operators
between $G^\sharp$-representations, which implies that $\mu_\alpha$ factors
through $T_\fs \to T_{\ft^\sharp}$ and in this way gives the function $\mu_\alpha$
for $G^\sharp$. By \cite[Theorem 1.6]{Sil2} all zeros of $\mu_\alpha$ are fixed
points of the reflection $s_\alpha \in W_\fs$. Hence $\mu_\alpha (t) \neq 0$ if
$s_\alpha (\tilde t) \neq \tilde t$, proving \eqref{eq:6.2}.

It follows that $[t,\rho]$ maps to $[\tilde t,\rho \big|_{W(R_{\ft^\sharp,t})}$ in
$\big( T_\fs \q W_\fs \big) / \Stab (\fs)^+ X_\nr (L / L^\sharp)$, and similarly
for $[t',\rho']$. The $\Stab (\fs)^+ X_\nr (L / L^\sharp)$-orbits of $[\tilde t,
\rho \big|_{W(R_{\ft^\sharp,t})}$ and $[\tilde t',\rho \big|_{W(R_{\ft^\sharp,t'})}$
are equal if and only if 
\[
\text{there is a } w \in W_{\ft^\sharp} \text{ such that } w t' = t \text{ and }
(w \rho')\big|_{W(R_{\ft^\sharp,t})} = \rho \big|_{W(R_{\ft^\sharp,t})}.
\]
By Lemma \ref{lem:6.1}.b the last condition is equivalent to $w \rho'$ and
$\rho$ having the same unipotent class as Springer parameter. Because $w$ is only
determined up to $W_{\ft^\sharp, t}$, these unipotent classes must be considered
in the complex reductive group with maximal torus $T_{\ft^\sharp}$, root system 
$R_{\mf t^\sharp,t}$ and Weyl group $W_{\mf t^\sharp,t}$.
\end{proof}

As before, let $\mathcal L$ be a set of representatives for the conjugacy classes 
of Levi subgroups of $G$. Then $\{ L^\sharp : L \in \mathcal L \}$ is a set of 
representatives for the conjugacy classes of Levi subgroups of $G^\sharp$.

\begin{thm}\label{thm:6.4}
The maps from Lemma \ref{lem:6.3} combine to a commutative diagram of bijections
\[
\xymatrix{
\Irr (G^\sharp) \ar@{<->}[r] \ar@{<->}[d] &
\Phi_e (G^\sharp) \ar@{<->}[d] \\
\bigsqcup\nolimits_{L \in \mathcal L} 
\Big( \Irr_\cusp (L^\sharp) \q W (G^\sharp,L^\sharp) \Big)_\natural \ar@{<->}[r] 
\ar@{<->}[d] & 
\bigsqcup\nolimits_{L \in \mathcal L} \Big( \Phi (L^\sharp)_\cusp \q 
W (G^\sharp,L^\sharp) \Big)_\natural \ar@{<->}[d] \\
\!\bigsqcup\nolimits_{L \in \mathcal L} 
\Big( \Irr_\cusp (L) \q \Irr (L / L^\sharp) W (G,L) \Big)_\natural \ar@{<->}[r] & 
\bigsqcup\nolimits_{L \in \mathcal L} \Big( \Phi (L)_\cusp \q \Irr (L/L^\sharp) 
W (G,L) \Big)_\natural 
} 
\]
Here the family of 2-cocycles $\natural$ restricts to $\kappa_\omega$ on 
$\Irr^{[L,\omega]_L}(L)$. The tempered representations correspond to the 
bounded enhanced Langlands parameters and the entire diagram is canonical up to 
permutations within L-packets. 

\end{thm}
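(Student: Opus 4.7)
The plan is to assemble the commutative diagram by taking the disjoint union over all Bernstein components $\fs = [L,\omega]_G$ of the bijective diagram provided by Lemma \ref{lem:6.3}, and then reorganizing the resulting union in terms of conjugacy classes of Levi subgroups of $G$ together with their cuspidal data.

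First I would invoke the Bernstein decomposition $\Irr (G^\sharp) = \bigsqcup_\fs \Irr^\fs (G^\sharp)$, together with its image $\Phi_e (G^\sharp) = \bigsqcup_\fs \Phi_e (G^\sharp)^\fs$ on the Galois side (well defined because $\Irr^\fs (G^\sharp)$ is a union of L-packets, by the construction of \eqref{eq:6.1}). Lemma \ref{lem:6.3} then provides, for each $\fs$, a canonical (up to permutations within L-packets) commutative diagram in which the top row is the LLC for $G^\sharp$ restricted to $\Irr^\fs (G^\sharp) \leftrightarrow \Phi_e (G^\sharp)^\fs$, and the bottom row is the twisted extended quotient description on both sides, with 2-cocycle $\kappa_\omega$ in each case. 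Taking the disjoint union over $\fs$ is harmless for commutativity and bijectivity.

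Next I would reindex the disjoint union $\bigsqcup_\fs$ as $\bigsqcup_{L \in \mathcal L}\bigsqcup_{\fs = [L,\omega]_G}$. For fixed $L$, the subsets $\Irr^{\fs_L}(L) = T_\fs$ partition $\Irr_\cusp (L)$ into $W(G,L)$-stable pieces (with $W_\fs$ the stabilizer of $T_\fs$ in $W(G,L)$), so
\[
\bigsqcup\nolimits_{\fs = [L,\omega]_G} (T_\fs \q W_\fs)_{\kappa_\omega}
\]
reassembles into a single extended quotient of $\Irr_\cusp (L)$ by $W(G,L)$. Passing from $W_\fs$ to $\Stab (\fs)^+ X_\nr (L/L^\sharp) \ltimes W_\fs$, as in the bottom row of Lemma \ref{lem:6.3}, simply enlarges the acting group to $\Irr (L / L^\sharp) \cdot W(G,L)$, because by construction $\Stab (\fs)^+ \subset \Irr (L / L^\sharp Z(G)) \rtimes W(G,L)$ records exactly those pairs that preserve the $L$-inertial class of $\omega$. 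In this way the union of the lower-left terms of Lemma \ref{lem:6.3} over $\fs$ with Levi $L$ becomes $\bigl( \Irr_\cusp (L) \q \Irr (L/L^\sharp) W(G,L) \bigr)_\natural$, where $\natural$ is the family of 2-cocycles that restricts to $\kappa_\omega$ on the orbit through $\omega$. The corresponding statement on the Galois side is obtained identically, using the $W(G,L)$-equivariance and $\Irr (L/L^\sharp)$-equivariance of the LLC for $L$ provided by Theorem \ref{thm:6.5}(c). Replacing $\Irr (L/L^\sharp) W(G,L)$ by $W(G^\sharp, L^\sharp)$ in the middle row is then a matter of quotienting out the free action of $X_\nr (L^\sharp Z(G) / L^\sharp)$ and using $W(G^\sharp,L^\sharp) \cong W_\fs^\sharp$-type identifications component by component, exactly as in Lemma \ref{lem:6.3}.

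Finally, the characterization of tempered representations and bounded parameters is transported from each $\fs$-piece by Theorems \ref{thm:6.5}(a) and \ref{thm:4.9}(c), and commutativity and canonicity up to L-packet permutations are inherited directly from Lemma \ref{lem:6.3}. The main obstacle is the bookkeeping in the reindexing step: one must verify that the families of 2-cocycles $\kappa_\omega$ obtained separately on each Bernstein component glue consistently into a single family $\natural$ on all of $\Irr_\cusp (L)$, and that the actions of $\Stab (\fs)^+ X_\nr (L/L^\sharp) \ltimes W_\fs$ assemble precisely into the action of $\Irr (L/L^\sharp) W(G,L)$ (and, after dividing out unramified twists, of $W(G^\sharp, L^\sharp)$). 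Once this matching of stabilizers and cocycles is pinned down, the theorem follows by taking the disjoint union of the diagrams of Lemma \ref{lem:6.3}.
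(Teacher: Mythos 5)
Your proposal is correct and follows essentially the same route as the paper: decompose by Levi classes and Bernstein components, feed in Lemma \ref{lem:6.3} componentwise, identify the stabilizers and cocycles, and collapse the iterated extended quotient (the paper does this via Corollary \ref{cor:B.2}, which is exactly your "enlarging the acting group" step). The "bookkeeping" you flag at the end is precisely what the paper's proof supplies: the observation that a character of $L/L^\sharp$ ramified on $Z(G)$ cannot stabilize $\fs_L$, so that $X^L(\fs)X_\nr(L/L^\sharp)$ is the full stabilizer of $\fs_L$ in $\Irr(L/L^\sharp)$, giving $(\Irr_\cusp(L) \q \Irr(L/L^\sharp))_\natural \cong \Irr_\cusp(L^\sharp)$ and, by the $\Irr(L/L^\sharp)$-equivariance of the LLC for $L$, the matching statement for Langlands parameters.
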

\begin{proof}
The upper square follows quickly from Lemma \ref{lem:6.3}, in the same way as 
Theorem \ref{thm:6.2} followed from Lemma \ref{lem:6.1}. 

Recall from Lemma \ref{lem:4.7} that 
\[                                                   
\Irr^{\fs_L}(L^\sharp) \text{ is in bijection with }
(T_\fs \q X^L (\fs) X_\nr (L / L^\sharp))_{\kappa_\omega}. 
\]
Here $X^L (\fs)$ is the stabilizer of $\fs_L = [L,\omega]_L$ in 
$\Irr (L / L^\sharp Z(G))$. A character of $L / L^\sharp$ which is ramified on $Z(G)$ 
cannot stabilize $\fs_L$, so $X^L (\fs) X_\nr (L / L^\sharp)$ is the stabilizer of
$\fs_L$ in $\Irr (L / L^\sharp)$. By Theorem \ref{thm:6.5} the LLC for $L$ is 
bijective and $\Irr (L / L^\sharp)$-equivariant, so $X^L (\fs) X_\nr (L / L^\sharp)$
is also the stabilizer of $\Phi (L)^{\fs_L}$ in $\Irr (L /L^\sharp)$. This implies
\begin{multline*}
( \Irr_\cusp (L) \q \Irr (L / L^\sharp) )\natural \cong \bigsqcup\nolimits_{\fs_L = 
[L,\omega]_L} (\Irr^{\fs_L}(L) \q X^L (\fs) X_\nr (L / L^\sharp) )_{\kappa_\omega} \\
\cong \bigsqcup\nolimits_{\fs_L = [L,\omega]_L} 
\Irr^{\fs_L}(L^\sharp) = \Irr_\cusp (L^\sharp) ,
\end{multline*}
and similarly for Langlands parameters. These bijections are equivariant for 
permutations of the direct factors of $L$, so applying $( - \q W (G,L))_{\kappa_\omega}$ 
to all of them produces a commutative square as in the theorem, but with lower row
\begin{multline*}
\bigsqcup\nolimits_{L \in \mathcal L} 
\Big( \big( \Irr_\cusp (L) \q \Irr (L / L^\sharp) \big)_\natural \q W (G,L) \Big)_\natural 
\longleftrightarrow \\
\bigsqcup\nolimits_{L \in \mathcal L} \Big( \big( \Phi (L)_\cusp \q \Irr (L/L^\sharp)  
\big)_\natural \q W (G,L) \Big)_\natural .
\end{multline*}
We apply Corollary \ref{cor:B.2} to get the row in the theorem. The canonicity of the 
thus obtained commutative diagram is a consequence of the analogous property in Lemma 
\ref{lem:6.3}. The temperedness/boundedness correspondence follows from the
properties of the local Langlands correspondences for $G,G^\sharp,L$ and $L^\sharp$.
\end{proof}

\begin{ex} \label{the_example}
{\rm
Let $G = \SL_5(D)$.   
Let $V_4$ denote the non-cyclic group of order $4$.   Let $\bW_F$ denote the Weil group of $F$.   
There exists a classical Langlands parameter $\phi$ which factors through $V_4$:
\begin{align}\label{WEil}
\phi \colon \bW_F \to V_4 \to \PGL_2(\C)
\end{align}
Let $\tau$ be the cuspidal representation of $D^\times$ which has, as its Langlands parameter, 
a lift of $\phi$ to $\GL_2(\C)$.   Consider the group of characters $\chi$ for which $\chi \tau \cong \tau$.    
This group is isomorphic to $V_4$ and comprises the four characters
$\{1,\gamma, \eta, \gamma \eta \}$, where $\gamma, \eta$ are quadratic characters.    Let
\begin{align*}
L & = (D^\times)^5 \cap \SL_5(D)\\
\sigma & = \tau \otimes 1 \otimes \gamma \otimes \eta \otimes  \gamma \eta \in \Irr(L)\\
\fs & = [L,\sigma]_G
\end{align*}
Twisting by $\eta$ corresponds to the permutation $(13)(24)$, twisting by $\gamma \eta$ 
corresponds to the permutation $(14)(23)$.   
The Bernstein finite group $W^\fs$ is isomorphic to $V_4$.     
The corresponding Bernstein variety is the quotient $T^\fs/V_4$, where $T^\fs$ has 
the structure of a complex torus of dimension $4$.   

The summand $\cH^\fs(G)$ of the Hecke algebra $\cH(G)$ is Morita equivalent to the twisted crossed product
\[
\mathcal{O}(T^\fs) \rtimes_{\natural}V_4 ,
\] 
where $\natural$ is the  $2$-cocycle associated to the above projective representation of  $V_4$.  
Following \cite{ABPS3},
$\Irr(\mathcal{O}(T^\fs) \rtimes_{\natural}V_4)$ is the twisted extended quotient 
$(T^\fs \q V_4)_{\natural}$.
Now consider the standard projection
\[
\pi^\fs: (T^\fs \q V_4)_{\natural} \to T^\fs / V_4 .
\]
Let $(V_4)_t$ denote the isotropy group of $t \in T^\fs$.   Let
\begin{align*}
X & = \{ t \in T^\fs : |(V_4)_t| = 2\} ,\\
Y & = \{t \in T^\fs : (V_4)_t = V_4\} .
\end{align*}
We note that
\begin{itemize}
\item on the complement of $(X \cup Y)/V_4$ the fibre of $\pi^\fs$ has cardinality $1$: the corresponding 
(parabolically) induced representation is irreducible
\item  on $X/V_4$ the fibre of $\pi^\fs$ has cardinality $2$: the 
corresponding induced representation has two inequivalent irreducible constituents 
\item on $Y/V_4$ the fibre of $\pi^\fs$ has cardinality $1$, because $V_4$ admits a unique 
irrreducible projective representation with cocycle $\natural$: 
the corresponding induced representation has two \emph{equivalent} constituents.
\end{itemize}
The geometric structure of $\Irr^\fs (\SL_5(D))$ is the variety $T^\fs / V_4$ with doubling on $X/V_4$.}
\end{ex}

\appendix
\section{Twisted extended quotients}
\label{app:extquot}

Let $\Gamma$ be a group acting on a topological space $X$. In 
\cite[\S 2]{ABPS6} we studied various extended quotients of $X$ by $\Gamma$. In this 
paper we need the most general version, the twisted extended quotients.

Let $\natural$ be a given function which assigns to each 
$x \in X$ a 2-cocycle 
\[
\natural_x \colon \Gamma_x \times \Gamma_x \to \C^\times \text{, where } 
\Gamma_x = \{\gamma \in \Gamma : \gamma x = x\}. 
\]
It is assumed that $\natural_{\gamma x}$ and $\gamma_*{\natural_x}$ define the same
class in $H^2 (\Gamma_x , \C^\times)$, where $\gamma_* : \Gamma_x \to \Gamma_{\gamma x}$
sends $\alpha$ to $\gamma \alpha \gamma^{-1}$. Define 
\[
\widetilde X_\natural : = \{(x,\rho) : x \in X, \rho \in \Irr \,\C[\Gamma_x, \natural_x] \}.
\]
We  require, for every $(\gamma,x) \in \Gamma \times X$, a definite algebra isomorphism
\[
\phi_{\gamma,x} : \C[\Gamma_x,\natural_x]  \to \C[\Gamma_{\gamma x},\natural_{\gamma x}]
\]
such that:
\begin{itemize}
\item $\phi_{\gamma,x}$ is inner if $\gamma x = x$;
\item $\phi_{\gamma',\gamma x} \circ \phi_{\gamma,x} = 
\phi_{\gamma' \gamma,x}$ for all $\gamma',\gamma \in \Gamma, x \in X$.
\end{itemize}
We call these maps connecting homomorphisms, because they are reminiscent of a connection
on a vector bundle. Then we can define a $\Gamma$-action on $\widetilde X_\natural$ by
\[
\gamma \cdot (x,\rho) = (\gamma x, \rho \circ \phi_{\gamma,x}^{-1}).
\]
We form the \emph{twisted extended quotient}
\[
(X\q \Gamma)_\natural : = \widetilde{X}_\natural/\Gamma.
\]
We note that this reduces to the extended quotient of the second kind $X \q \Gamma$ 
from \cite[\S 2]{ABPS6} if $\natural_x$ is trivial for all $x \in X$ and 
$\phi_{\gamma,x}$ is conjugation by $\gamma$.

The map 
\[
\widetilde X_\natural \to X,  \quad (x,\rho) \mapsto x
\]
induces a map 
 \[
\pi_1: (X\q \Gamma)_\natural \to X/\Gamma
 \]
which we will call the \emph{standard projection}.

Such twisted extended quotients typically arise in the following situation.
Let $A$ be a $\C$-algebra such that all irreducible $A$-modules have countable
dimension over $\C$. Let $\Gamma$ be a group acting on $A$ by
automorphisms and form the crossed product $A \rtimes \Gamma$.

Let $X = \Irr (A)$. Now $\Gamma$ acts on $\Irr (A)$ and we get $\natural$ 
as follows. Given $x \in \Irr (A)$ choose an irreducible representation  
$(\pi_x,V_x)$ whose isomorphism class is $x$. 
For each $\gamma \in \Gamma$ consider $\pi_x$ twisted by $\gamma$:
\[
\gamma \cdot \pi_x : a \mapsto \pi_x (\gamma^{-1} a \gamma).
\]
Then $\gamma \cdot x$ is defined as the isomorphism class of $\gamma \cdot \pi_x$.
Since $\gamma \cdot \pi_x$ is equivalent to $\pi_{\gamma x}$, there exists 
a nonzero intertwining operator 
\begin{equation}\label{eq:B.1}
T_{\gamma,x} \in \Hom_A (\gamma \cdot \pi_x , \pi_{\gamma x}) .
\end{equation}
By Schur's lemma (which is applicable because $\dim V_x$ is countable) $T_{\gamma,x}$ 
is unique up to scalars, but in general there is no preferred choice. 
For $\gamma, \gamma' \in \Gamma_x$ there exists a unique $c \in \C^\times$ such that
\[
T_{\gamma,x} \circ T_{\gamma',x} = c T_{\gamma \gamma',x}.
\]
We define the 2-cocycle by 
\[
\natural_x (\gamma,\gamma') = c.
\]
Let $N_{\gamma,x}$ with $\gamma \in \Gamma_x$ be the standard basis of 
$\C [\Gamma_x,\natural_x]$. The algebra homomorphism $\phi_{\gamma,x}$ is essentially 
conjugation by $T_{\gamma,x}$, but we must be careful if some of the $T_\gamma$
coincide. The precise definition is 
\begin{equation}\label{eq:twisting}
\phi_{\gamma,x} (N_{\gamma',x}) = \lambda N_{\gamma \gamma' \gamma^{-1},\gamma x} 
\quad \text{if} \quad T_{\gamma,x} T_{\gamma',x} T_{\gamma,x}^{-1} = 
\lambda T_{\gamma \gamma' \gamma^{-1}, \gamma x}, \lambda \in \C^\times .
\end{equation}
Notice that \eqref{eq:twisting} does not depend on the choice of $T_{\gamma,x}$.

Suppose that $\Gamma_x$ is finite and $(\tau,V_\tau) \in \Irr (\C [\Gamma_x,\natural_x])$.
Then $V_x \otimes V^*_\tau$ is an irreducible $A \rtimes \Gamma_x$-module,
in a way which depends on the choice of intertwining operators $T_{\gamma,x}$. 

\begin{lem} \label{lem:B.6} \textup{\cite[Lemma 2.3]{ABPS6}} \ \\
Let $A$ and $\Gamma$ be as above and assume that the action of $\Gamma$ on
$\Irr (A)$ has finite isotropy groups.
\enuma{
\item There is a bijection
\[
\begin{array}{ccc}
(\Irr (A) \q \Gamma)_\natural & \longleftrightarrow & \Irr (A \rtimes \Gamma) \\
(\pi_x,\tau) & \mapsto & \pi_x \rtimes \tau := 
\Ind_{A \rtimes \Gamma_x}^{A \rtimes \Gamma} (V_x \otimes V^*_\tau) .
\end{array}
\]
\item If all irreducible $A$-modules are one-dimensional, then part (a) 
becomes a natural bijection
\[
\Irr (A) \q \Gamma \longleftrightarrow \Irr (A \rtimes \Gamma) .
\]
}
\end{lem}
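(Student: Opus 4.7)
The plan is to construct the bijection via Clifford theory for the crossed product $A \rtimes \Gamma$, using the finiteness of isotropy groups to run the standard Mackey machine. First I would check that the formula on the right does define an irreducible $A \rtimes \Gamma_x$-module: the action of $a \in A$ on $V_x \otimes V_\tau^*$ is via $\pi_x(a) \otimes \mathrm{id}$, while $\gamma \in \Gamma_x$ acts via $T_{\gamma,x} \otimes \tau^*(N_{\gamma,x})$. The 2-cocycle relation $T_{\gamma,x} T_{\gamma',x} = \natural(x)(\gamma,\gamma') T_{\gamma\gamma',x}$ matched against $\tau^*(N_{\gamma,x}) \tau^*(N_{\gamma',x}) = \natural(x)(\gamma,\gamma')^{-1} \tau^*(N_{\gamma\gamma',x})$ (note the dual) shows that this is a genuine algebra action, independent of the scalar ambiguity in the $T_{\gamma,x}$. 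Irreducibility follows from Schur's lemma applied to $V_x$ (justified by the countable-dimension hypothesis) combined with the irreducibility of $\tau$ under $\C[\Gamma_x,\natural(x)]$.

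Second, I would verify that inducing to $A \rtimes \Gamma$ preserves irreducibility. This is the standard argument: an $A \rtimes \Gamma$-submodule of $\mathrm{Ind}(V_x \otimes V_\tau^*)$ restricts to an $A$-submodule whose isotypic decomposition is indexed by the $\Gamma$-orbit of $x$; picking out the $\pi_x$-isotypic component recovers an $A \rtimes \Gamma_x$-submodule of $V_x \otimes V_\tau^*$, forcing it to be everything or zero. Well-definedness under the $\Gamma$-action on pairs $(x,\tau)$ is checked by translating with any chosen $T_{\gamma,x}$: the induced module does not change, reflecting exactly the equivalence relation defining $(\Irr(A) \q \Gamma)_\natural$.

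Third, I would prove surjectivity and injectivity simultaneously by reversing the construction. Given $M \in \Irr(A \rtimes \Gamma)$, restrict to $A$ and pick an irreducible constituent of class $x$. Set $M_x \subset M$ to be its $\pi_x$-isotypic part (for some representative $V_x$). Because $M$ is irreducible and $\Gamma_x$ is finite, Mackey's criterion gives $M \cong \mathrm{Ind}_{A \rtimes \Gamma_x}^{A \rtimes \Gamma}(M_x)$ and the $\Gamma$-orbit of $x$ is uniquely determined by $M$. Now $M_x$ is an irreducible $A \rtimes \Gamma_x$-module whose restriction to $A$ is $\pi_x$-isotypic; Schur's lemma identifies it with $V_x \otimes W$ for some vector space $W$, and the residual $\Gamma_x$-action on $W$ (through the chosen $T_{\gamma,x}$) is an irreducible representation of $\C[\Gamma_x, \natural(x)]$, which we may take to be $\tau^*$. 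This recovers the pair $(x,\tau)$ up to the prescribed equivalence, giving the bijection.

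Finally, part (b) is almost immediate: when every irreducible $A$-module is one-dimensional, each $T_{\gamma,x}$ is a scalar, so $\natural(x)$ is cohomologically trivial and every $\phi_{\gamma,x}$ reduces to actual conjugation by $\gamma$; thus $(\Irr(A) \q \Gamma)_\natural$ collapses to $(\Irr(A) \q \Gamma)_2$ and the map $(\pi_x, \tau) \mapsto \pi_x \rtimes \tau$ is canonical (no choice of $T_{\gamma,x}$ intervenes). The main obstacle will be bookkeeping the compatibility between the three places where choices enter: the representatives $\pi_x$ of isomorphism classes, the intertwiners $T_{\gamma,x}$, and the connecting homomorphisms $\phi_{\gamma,x}$. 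Showing that these choices wash out in the definition of $(\Irr(A) \q \Gamma)_\natural$, and that the induction functor is insensitive to them in exactly the right way, is the conceptually delicate step; everything else is a routine adaptation of Clifford theory to the crossed-product setting, enabled by the countable-dimension hypothesis which validates Schur's lemma throughout.
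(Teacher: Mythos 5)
The paper does not actually prove this lemma — it is quoted verbatim from [ABPS6, Lemma 2.3] — and the proof given there is precisely the Clifford--Mackey argument you reconstruct: make $V_x \otimes V_\tau^*$ into an irreducible $A \rtimes \Gamma_x$-module using the dual cocycle to cancel $\natural(x)$, induce up, and invert by restricting an irreducible $A \rtimes \Gamma$-module to $A$ and extracting the $\pi_x$-isotypic component. Your proposal is correct and matches that approach; the one step I would spell out more carefully is why the restriction $M|_A$ of an irreducible $A \rtimes \Gamma$-module contains an irreducible $A$-\emph{submodule} (not merely a subquotient) whose isotypic part is then an $A \rtimes \Gamma_x$-module, since this is exactly where the countable-dimension hypothesis (validating Schur's lemma) and the finiteness of the isotropy groups are used.
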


Via the following result twisted extended quotients also arise from algebras of
invariants.

\begin{lem}\label{lem:B.7}
Let $\Gamma$ be a finite group acting on a $\C$-algebra $A$. There is a bijection 
\[
\begin{array}{ccc}
\{ V \in \Irr (A \rtimes \Gamma) : V^\Gamma \neq 0 \} & \longleftrightarrow & 
\Irr (A^\Gamma) \\
V & \mapsto & V^\Gamma .
\end{array}
\]
If all elements of $\Irr (A)$ have countable dimension, it becomes
\[
\begin{array}{ccc}
\{ (\pi_x,\tau) \in (\Irr (A) \q \Gamma)_\natural : \Hom_{\Gamma_x}(V_\tau,V_x) \neq 0 \} 
& \longleftrightarrow & \Irr (A^\Gamma) \\
(\pi_x,\tau) & \mapsto & \Hom_{\Gamma_x}(V_\tau,V_x) .
\end{array}
\]
\end{lem}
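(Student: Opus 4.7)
My plan is to realize $A^\Gamma$ as an idempotent corner of $A\rtimes\Gamma$ and apply the standard bijection between irreducibles coming from idempotent truncation. Set
\[
e_\Gamma \;=\; \frac{1}{|\Gamma|}\sum_{\gamma\in\Gamma}\gamma \;\in\; A\rtimes\Gamma,
\]
which is an idempotent. Using $\gamma e_\Gamma = e_\Gamma$ together with the crossed-product relation $\gamma a = (\gamma\cdot a)\gamma$, a direct calculation gives $e_\Gamma a e_\Gamma = \bar{a}\, e_\Gamma$, where $\bar{a} = |\Gamma|^{-1}\sum_\gamma \gamma(a) \in A^\Gamma$, so that $a\mapsto a\,e_\Gamma$ defines an algebra isomorphism $A^\Gamma \isom e_\Gamma (A\rtimes\Gamma) e_\Gamma$. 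For any module $V$ over $A\rtimes\Gamma$, averaging by $e_\Gamma$ shows $e_\Gamma V = V^\Gamma$, and the two resulting $A^\Gamma$-module structures on $V^\Gamma$ coincide.

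Now I invoke the general principle that for any unital algebra $B$ and any idempotent $p\in B$, the functor $V\mapsto pV$ gives a bijection between iso classes of irreducible $B$-modules with $pV\neq 0$ and iso classes of irreducible $pBp$-modules. Indeed, if $V$ is irreducible and $N\subset pV$ is a nonzero $pBp$-submodule, then $BN=V$ forces $pV=pBN=(pBp)N=N$, so $pV$ is $pBp$-irreducible; a quasi-inverse is furnished by sending $M$ to the unique irreducible quotient of the $B$-module $Bp\otimes_{pBp}M$ whose $p$-invariants are nonzero, which is $M$ itself. Applying this with $B=A\rtimes\Gamma$ and $p=e_\Gamma$ yields the first bijection of the lemma.

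For the refinement to $(\Irr(A)\q\Gamma)_\natural$, I use Lemma~\ref{lem:B.6}.a to parametrize $\Irr(A\rtimes\Gamma)$ by pairs $(\pi_x,\tau)$ with associated module $V=\pi_x\rtimes\tau=\Ind_{A\rtimes\Gamma_x}^{A\rtimes\Gamma}(V_x\otimes V_\tau^*)$. Regarded merely as a $\C[\Gamma]$-module, $V$ is induced from the restriction $(V_x\otimes V_\tau^*)\big|_{\Gamma_x}$, and Frobenius reciprocity for $\C[\Gamma_x]\hookrightarrow\C[\Gamma]$ gives
\[
V^\Gamma \;=\; (V_x\otimes V_\tau^*)^{\Gamma_x}.
\]
On $V_x$ the group $\Gamma_x$ acts projectively with cocycle $\natural(x)$ through the intertwiners $T_{\gamma,x}$; on $V_\tau^*$ it acts projectively with the inverse cocycle $\natural(x)^{-1}$. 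The diagonal action on the tensor product is therefore a genuine linear representation, and one identifies
\[
(V_x\otimes V_\tau^*)^{\Gamma_x} \;\cong\; \Hom_{\Gamma_x}(V_\tau,V_x),
\]
where both $V_\tau$ and $V_x$ are viewed as $\natural(x)$-projective $\Gamma_x$-representations. This space is nonzero precisely when $V_\tau$ occurs as a $\Gamma_x$-constituent of $V_x$, cutting out the announced subset of $(\Irr(A)\q\Gamma)_\natural$. The only genuinely subtle point is the cocycle bookkeeping needed to certify that the diagonal $\Gamma_x$-action is linear and that the identifications are $A^\Gamma$-equivariant; the latter follows from the fact that $\pi_x(a)$ commutes with each $T_{\gamma,x}$ whenever $a\in A^\Gamma$, so that $A^\Gamma$ acts on $\Hom_{\Gamma_x}(V_\tau,V_x)$ by post-composition in a way that matches the action on $V^\Gamma$.
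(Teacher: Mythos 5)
Your proof is correct and follows essentially the same route as the paper: both realize $A^\Gamma$ as the corner $e_\Gamma (A\rtimes\Gamma)e_\Gamma$ of the crossed product and use the standard idempotent-truncation (equivalently, Morita) bijection $V\mapsto e_\Gamma V=V^\Gamma$ for the first statement, then combine Lemma \ref{lem:B.6}.a with Frobenius reciprocity and the identification $(V_x\otimes V_\tau^*)^{\Gamma_x}\cong \Hom_{\Gamma_x}(V_\tau,V_x)$ for the second. The extra checks you record (that the two $A^\Gamma$-module structures on $V^\Gamma$ agree, and the cocycle cancellation on $V_x\otimes V_\tau^*$) are implicit in the paper's argument and harmless to make explicit.
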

\begin{proof}
Consider the idempotent 
\begin{equation}\label{eq:4.23}
p_\Gamma = |\Gamma|^{-1} \sum\nolimits_{\gamma \in \Gamma} \gamma \in \C [\Gamma] .
\end{equation}
It is well-known and easily shown that 
\[
A^\Gamma \cong p_\Gamma (A \rtimes \Gamma) p_\Gamma
\]
and that the right hand side is Morita equivalent with the two-sided ideal 
\[
I = (A \rtimes \Gamma) p_\Gamma (A \rtimes \Gamma) \subset A \rtimes \Gamma .
\]
The Morita equivalence sends a module $V$ over the latter algebra to
\[
p_\Gamma (A \rtimes \Gamma) \otimes_{(A \rtimes \Gamma) p_\Gamma
(A \rtimes \Gamma)} V = V^\Gamma . 
\]
As $I$ is a two-sided ideal, 
\[
\Irr (I) = \{ V \in \Irr (A \rtimes \Gamma) : I \cdot V \neq 0 \}  =
\{ V \in \Irr (A \rtimes \Gamma) : p_\Gamma V = V^\Gamma \neq 0 \}
\]
This gives the first bijection. From Lemma \ref{lem:B.6}.a we know that
every such $V$ is of the form $\pi_x \rtimes \tau$. With Frobenius reciprocity
we calculate
\[
(\pi_x \rtimes \tau)^\Gamma = \Big( \Ind_{A \rtimes \Gamma_x}^{A \rtimes \Gamma} 
(V_x \otimes V^*_\tau) \Big)^\Gamma \cong (V_x \otimes V^*_\tau)^{\Gamma_x} =
\Hom_{\Gamma_x}(V_\tau,V_x).
\]
Now Lemma \ref{lem:B.6}.a and the first bijection give the second.
\end{proof}

Let $A$ be a commutative $\C$-algebra all whose irreducible representations 
are of countable dimension over $\C$. Then $\Irr (A)$ consists of
characters of $A$ and is a $T_1$-space. Typical examples are $A = C_0 (X)$
(with $X$ locally compact Hausdorff), $A = C^\infty (X)$ (with $X$ a smooth
manifold) and $A = \mc O (X)$ (with $X$ an algebraic variety).

As a kind of converse to Lemmas \ref{lem:B.6} and \ref{lem:B.7}, we show that
every twisted extended quotient of $\Irr (A)$ appears as the space of 
irreducible representations of some algebras. With small modifications, the
argument also works for smooth manifolds and algebraic varieties.

Let $\Gamma$ be a group acting on $A$ by algebra automorphisms, such that 
$\Gamma_x$ is finite for every $x \in \Irr (A)$. Recall that every 2-cocycle
$\natural$ of
$\Gamma$ arises from a projective $\Gamma$-representation $(\mu,V_\mu)$ by
\[
\mu (\gamma) \mu (\gamma') = \natural(\gamma, \gamma') \mu (\gamma,\gamma') . 
\]
Let $\Gamma$ act on $A \otimes \End_\C (V_\mu)$ by
\[
\gamma \cdot (a \otimes h) = 
\gamma (a) \otimes \mu (\gamma) h \mu (\gamma )^{-1}.
\]
\begin{lem}\label{lem:B.1}
There are bijections 
\[
\begin{array}{lcl}
\Irr \big( (A \otimes \End_\C (V_\mu))\rtimes \Gamma \big) & 
\longleftrightarrow & ( \Irr (A) \q \Gamma )_\natural , \\
\Irr \big( (A \otimes \End_\C (V_\mu) )^\Gamma \big) & \longleftrightarrow & 
\{ [x,\rho] \in ( X \q \Gamma )_\natural : \rho \; \mathrm{ appears \; in } \; V_\mu \} .
\end{array}
\]
\end{lem}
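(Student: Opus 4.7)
The plan is to apply Lemmas \ref{lem:B.6}(a) and \ref{lem:B.7} directly to the $\Gamma$-algebra $B := A \otimes \End_\C (V_\mu)$.

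First I would identify $\Irr (B)$ canonically with $\Irr (A)$ via the Morita equivalence induced by the matrix factor $\End_\C (V_\mu)$. Since $A$ is commutative, every $x \in \Irr (A)$ is a character and determines an irreducible $B$-module $\pi_x$ on $V_\mu$ by $(a \otimes h) \cdot v = x(a) h(v)$; every irreducible $B$-module arises this way. The $\Gamma$-action on $B$ induces the given $\Gamma$-action on $\Irr (B) = \Irr (A)$, and so the isotropy groups match: $\Gamma_{\pi_x} = \Gamma_x$.

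Next I would verify that the $2$-cocycle on $\Irr (B)$ produced by the general construction preceding Lemma \ref{lem:B.6} coincides with $\natural$. For any $\gamma \in \Gamma$, the twisted representation $\gamma \cdot \pi_x$ sends $a \otimes h$ to $x(\gamma^{-1}a)\, \mu (\gamma)^{-1} h\, \mu (\gamma)$, and Schur's lemma forces every intertwiner in $\Hom_B (\gamma \cdot \pi_x, \pi_{\gamma x})$ to be a scalar multiple of $\mu (\gamma)$. Choose $T_{\gamma, \pi_x} := \mu (\gamma)$; then for $\gamma, \gamma' \in \Gamma_x$,
\[
T_{\gamma, \pi_x} T_{\gamma', \pi_x} = \mu (\gamma) \mu (\gamma') = \natural (\gamma, \gamma')\, \mu (\gamma \gamma') = \natural (\gamma, \gamma')\, T_{\gamma \gamma', \pi_x},
\]
so the cocycle at the point $\pi_x$ is exactly $\natural \big|_{\Gamma_x \times \Gamma_x} = \natural (x)$. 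A parallel computation using \eqref{eq:twisting} shows that the connecting homomorphisms $\phi_{\gamma, \pi_x}$ agree with those implicit in the definition of $(\Irr (A) \q \Gamma)_\natural$.

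With these identifications in place, Lemma \ref{lem:B.6}(a) applied to $B$ delivers the first bijection $\Irr (B \rtimes \Gamma) \leftrightarrow (\Irr (A) \q \Gamma)_\natural$. For the second, Lemma \ref{lem:B.7} applied to $B$ gives
\[
\Irr (B^\Gamma) \longleftrightarrow \{ [\pi_x, \rho] \in (\Irr (A) \q \Gamma)_\natural : \Hom_{\Gamma_x} (V_\rho, V_{\pi_x}) \neq 0 \}.
\]
Since the $\Gamma_x$-action on $V_{\pi_x} = V_\mu$ is implemented by the projective representation $\mu \big|_{\Gamma_x}$ (with cocycle $\natural (x)$), the condition $\Hom_{\Gamma_x} (V_\rho, V_{\pi_x}) \neq 0$ is precisely that $\rho$ appears as a constituent of $V_\mu$. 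The main point requiring attention is the cocycle-matching in the middle step; once that is settled, everything else is a formal consequence of the two preceding lemmas.
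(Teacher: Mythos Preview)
Your proposal is correct and follows essentially the same approach as the paper: identify $\Irr(A\otimes\End_\C(V_\mu))$ with $\Irr(A)$, observe that $T_{\gamma,x}=\mu(\gamma)$ serves as the intertwiner so that the resulting cocycle is $\natural$, then invoke Lemma~\ref{lem:B.6}(a) for the first bijection and Lemma~\ref{lem:B.7} for the second. Your write-up is more explicit about the cocycle verification and the connecting homomorphisms, but the argument is identical.
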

\begin{proof}
We can identify $\Irr (A \otimes \End_\C (V_\mu))$ with $\{ \C_x \otimes V_\mu :
x \in \Irr (A) \}$. It follows directly from \eqref{eq:B.1} that we can take
$T_{\gamma,x} = \mu (\gamma)$ for all $\gamma \in \Gamma$ and $x \in \Irr (A)$.
Thus the first bijection is an instance of Lemma \ref{lem:B.6}.a.

Let $x \in \Irr (A)$ and $(\tau,V_\tau) \in \Irr (\C [\Gamma_x,\natural])$. Then
\[
\Hom_{\Gamma_x}(\tau,\C_x \otimes V_\mu) = \Hom_{\Gamma_x}(\tau,V_\mu) , 
\]
and this is nonzero if and only if $\tau$ appears in $V_\mu$. Now an application
of Lemma \ref{lem:B.7} proves the second bijection.
\end{proof}

\begin{cor}\label{cor:B.2}
In the above setting, suppose that $\Gamma = \Gamma_1 \rtimes \Gamma_2$ 
is a semidirect product. Then there is a canonical bijection
\[
(\Irr (A) \q \Gamma )_\natural \longleftrightarrow   
\big( (\Irr (A) \q \Gamma_1 )_\natural \q \Gamma_2 )_\natural .
\]
\end{cor}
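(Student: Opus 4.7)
The plan is to identify both sides with the space of irreducible representations of a single algebra, using Lemma \ref{lem:B.1} and the semidirect product structure. Fix a projective representation $(\mu,V_\mu)$ of $\Gamma$ realizing the 2-cocycle $\natural$.

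First, applying Lemma \ref{lem:B.1} directly to the left hand side yields a canonical bijection
\[
(\Irr(A) \q \Gamma)_\natural \longleftrightarrow \Irr\bigl((A \otimes \End_\C(V_\mu)) \rtimes \Gamma\bigr).
\]
For the right hand side I would proceed in two stages. The restriction $\mu\big|_{\Gamma_1}$ is a projective representation of $\Gamma_1$ with 2-cocycle $\natural\big|_{\Gamma_1 \times \Gamma_1}$, so Lemma \ref{lem:B.1} applied to the triple $(A, \Gamma_1, \mu\big|_{\Gamma_1})$ produces
\[
(\Irr(A) \q \Gamma_1)_\natural \longleftrightarrow \Irr(B), \qquad B := (A \otimes \End_\C(V_\mu)) \rtimes \Gamma_1.
\]
Normality of $\Gamma_1$ in $\Gamma$ lets $\Gamma_2$ act on $B$ by algebra automorphisms (on $A$ through its given action, on $\End_\C(V_\mu)$ by conjugation with $\mu(\gamma_2)$, and on $\C[\Gamma_1]$ by conjugation inside $\Gamma$), and under the identification above this matches the naturally induced $\Gamma_2$-action on $(\Irr(A) \q \Gamma_1)_\natural$.

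Next I would invoke Lemma \ref{lem:B.6}(a) for the $\Gamma_2$-action on $B$. For a point $y \in \Irr(B)$ and $\gamma_2 \in (\Gamma_2)_y$, the element $\mu(\gamma_2) \in B \rtimes \Gamma_2$ supplies the canonical intertwining operator $T_{\gamma_2,y}$, from which one reads off the 2-cocycle on $(\Gamma_2)_y$ used to form the outer twisted extended quotient. Verifying via the projective representation identity $\mu(\gamma)\mu(\gamma') = \natural(\gamma,\gamma')\mu(\gamma\gamma')$ that this 2-cocycle agrees with $\natural$, Lemma \ref{lem:B.6}(a) then gives
\[
\bigl((\Irr(A) \q \Gamma_1)_\natural \q \Gamma_2 \bigr)_\natural \longleftrightarrow \Irr(B \rtimes \Gamma_2).
\]
The semidirect product decomposition $\Gamma = \Gamma_1 \rtimes \Gamma_2$ finally yields a canonical algebra isomorphism
\[
B \rtimes \Gamma_2 \;=\; \bigl((A \otimes \End_\C(V_\mu)) \rtimes \Gamma_1\bigr) \rtimes \Gamma_2 \;\cong\; (A \otimes \End_\C(V_\mu)) \rtimes \Gamma,
\]
so both sides correspond canonically to $\Irr\bigl((A \otimes \End_\C(V_\mu)) \rtimes \Gamma\bigr)$.

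The main obstacle is bookkeeping with the 2-cocycles: one must verify that the iterated construction of connecting homomorphisms $\phi_{\gamma,\cdot}$ on $\Irr(B)$ produces the same 2-cocycle on each $\Gamma_2$-stabilizer as is prescribed by $\natural$ via the semidirect product structure, not merely a cohomologous representative. This is precisely where the canonical choice $T_{\gamma_2,y} = \mu(\gamma_2)$ becomes essential, because it ties the outer 2-cocycle back to the single projective representation $\mu$ of the full group $\Gamma$ and shows the identifications in the two-step process are compatible with the one-step identification of the left hand side.
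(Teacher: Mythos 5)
Your proposal is correct and follows essentially the same route as the paper: apply Lemma \ref{lem:B.1} to both sides, use the crossed-product identity $(A \otimes \End_\C (V_\mu)) \rtimes \Gamma = ((A \otimes \End_\C (V_\mu)) \rtimes \Gamma_1) \rtimes \Gamma_2$, and observe that canonicity holds because the single projective representation $\mu$ of $\Gamma$ supplies the cocycle data at every stage. The paper's proof is just a terser version of this; your extra care with the outer 2-cocycle (via the choice $T_{\gamma_2,y}$ coming from $\mu$) is exactly the point the paper compresses into ``the same 2-cocycle is used on both sides.''
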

\begin{proof}
The bijection is obtained from Lemma \ref{lem:B.1} and
\[
(A \otimes \End_\C (V_\mu) ) \rtimes \Gamma =  
\big( (A \otimes \End_\C (V_\mu) ) \rtimes \Gamma_1 \big) \rtimes \Gamma_2
\]
It is canonical because the same 2-cocycle is used on both sides.
\end{proof}


\begin{thebibliography}{99}

\bibitem[ABPS1]{ABPS1} A.-M. Aubert, P.F. Baum, R.J. Plymen, M. Solleveld,	
``On the local Langlands correspondence for non-tempered representations",
M\"unster Journal of Mathematics {\bf 7} (2014), 27--50.

\bibitem[ABPS2]{ABPS2} A.-M. Aubert, P.F. Baum, R.J. Plymen, M. Solleveld,	
``Geometric structure in smooth dual and local Langlands conjecture",
Japanese Journal of Mathematics {\bf 9} (2014), 99--136. 

\bibitem[ABPS3]{ABPS3} A.-M. Aubert, P.F. Baum, R.J. Plymen, M. Solleveld,
``The local Langlands correspondence for inner forms of $\SL_n$",
Res. Math. Sci. (2016) {\bf 33.2}.

\bibitem[ABPS4]{ABPS4} A.-M. Aubert, P.F. Baum, R.J. Plymen, M. Solleveld,
Hecke algebras for inner forms of $p$-adic special linear groups,
J. Inst. Math. Jussieu {\bf 16} (2017) 351--419.

\bibitem[ABPS5]{ABPS5} A.-M. Aubert, P.F. Baum, R.J. Plymen, M. Solleveld,
``Geometric structure for the principal series of a split reductive $p$-adic group 
with connected centre'', J. Noncommut. Geom. {\bf 10} (2016) 663--680.  

\bibitem[ABPS6]{ABPS6} A.-M. Aubert, P.F. Baum, R.J. Plymen, M. Solleveld,
The principal series of $p$-adic groups with disconnected centre,
Proc. London Math. Soc. {\bf 114} (2017) 798--854.

\bibitem[ABPS7]{ABPS8} A.-M. Aubert, P.F. Baum, R.J. Plymen, M. Solleveld, 
``On the spectra of finite type algebras'', arXiv:1705.01404

\bibitem[Bad]{Bad} A. I. Badulescu, ``Correspondance de Jacquet-Langlands pour
les corps locaux de ca\-rac\-t\'e\-ris\-ti\-que non nulle", Ann. Sci. \'Ec. Norm.
Sup. (4) {\bf 35} (2002), 695--747.

\bibitem[BaNi]{BaNi} P.F. Baum, V. Nistor, ``Periodic cyclic homology of 
Iwahori--Hecke algebras", K-Theory {\bf 27.4} (2002), 329--357.

\bibitem[BeDe]{BeDe} J. Bernstein, P. Deligne, ``Le "centre" de Bernstein",
pp. 1--32 in: \emph{Repr\'esentations des grou\-pes r\'e\-duc\-tifs sur un corps local},
Travaux en cours, Hermann, Paris, 1984.

\bibitem[BHK]{BHK} C.J. Bushnell, G. Henniart, P.C. Kutzko,
``Types and explicit Plancherel formulae for reductive $p$-adic groups'',
pp. 55--80 in: \emph{On certain L-functions}, Clay Math. Proc. {\bf 13}, 
American Mathematical Society, 2011.

\bibitem[BuKu]{BuKu3} C.J. Bushnell, P.C. Kutzko, ``Smooth representations of 
reductive $p$-adic groups: structure theory via types",
Proc. London Math. Soc. {\bf 77.3} (1998), 582--634.

\bibitem[DKV]{DKV} P. Deligne, D. Kazhdan, M.-F.~Vign\'eras,
``Repr\'esentations des alg\`ebres centrales simples $p$-adiques", pp. 33--117 in:
\emph{Repr\'esentations des groupes r\'eductifs sur un corps local},
Travaux en cours, Hermann, 1984.

\bibitem[DeOp]{DeOp} P. Delorme, E.M. Opdam,
``The Schwartz algebra of an affine Hecke algebra",
J. reine angew. Math. {\bf 625} (2008), 59--114

\bibitem[Hen]{Hen} G. Henniart, ``Une preuve simple des conjectures de Langlands pour 
$\GL(n)$ sur un corps $p$-adique", Inv. Math. \textbf{139} (2000), 439--455.

\bibitem[HiSa]{HiSa} K. Hiraga, H. Saito, ``On L-packets for inner forms of $\SL_n$",
Mem. Amer. Math. Soc. {\bf 1013}, Vol. {\bf 215} (2012).

\bibitem[Kat]{Kat} S.-I. Kato, ``A realization of irreducible representations
of affine Weyl groups", Indag. Math. {\bf 45.2} (1983), 193--201.

\bibitem[KaLu]{KaLu} D. Kazhdan, G. Lusztig,      
``Proof of the Deligne-Langlands conjecture for Hecke algebras",
Invent. Math. {\bf 87} (1987), 153--215. 

\bibitem[Lus1]{Lus1} G. Lusztig, ``Cells in affine Weyl groups",
pp. 255--287 in: \emph{Algebraic groups and related topics},
Adv. Stud. Pure Math. {\bf 6}, North Holland, Amsterdam, 1985.

\bibitem[Lus2]{Lus3} G. Lusztig, ``Cells in affine Weyl groups III",
J. Fac. Sci. Univ. Tokyo {\bf 34.2} (1987), 223--243.

\bibitem[Lus3]{Lus4} G. Lusztig, ``Cells in affine Weyl groups IV",
J. Fac. Sci. Univ. Tokyo {\bf 36.2} (1989), 297--328.

\bibitem[Opd]{Opd} E.M. Opdam,
``On the spectral decomposition of affine Hecke algebras'',
J. Inst. Math. Jussieu \textbf{3.4} (2004), 531--648.
 
\bibitem[RaRa]{RaRa} A. Ram, J. Rammage, ``Affine Hecke algebras, cyclotomic 
Hecke algebras and Clifford theory",
pp. 428--466 in: \emph{A tribute to C.S. Seshadri (Chennai 2002)}, 
Trends in Mathematics, Birkh\"auser, 2003.

\bibitem[S\'ec]{Sec3} V. S\'echerre, 
``Repr\'esentations lisses de $\GL_m (D)$ III: types simples",
Ann. Scient. \'Ec. Norm. Sup. {\bf 38} (2005), 951--977.

\bibitem[S\'eSt1]{SeSt4} V. S\'echerre, S. Stevens, 
``Repr\'esentations lisses de $\GL_m (D)$ IV: repr\'esentations super\-cus\-pi\-da\-les", 
J. Inst. Math. Jussieu {\bf 7.3} (2008), 527--574.

\bibitem[S\'eSt2]{SeSt6} V. S\'echerre, S. Stevens,
``Smooth representations of $\GL(m,D)$ VI: semisimple types",
Int. Math. Res. Notices {\bf 13} (2012), 2994--3039.

\bibitem[Sil2]{Sil2} A.J. Silberger,          
``Special representations of reductive $p$-adic groups are not integrable'',
Ann. Math {\bf 111} (1980), 571--587.   

\bibitem[Wal]{Wal} J.-L. Waldspurger, 
``La formule de Plancherel pour les groupes $p$-adiques (d'apr\`es Harish-Chandra)", 
J. Inst. Math. Jussieu {\bf 2.2} (2003), 235--333.

\end{thebibliography}
\end{document}